\documentclass[11pt]{article}

\usepackage{amsmath, amssymb, amsthm}
\usepackage{dsfont}
\usepackage{graphicx}
\usepackage{tikz}
\usepackage{float}
\usepackage{enumerate}
\usepackage{hyperref}

\usetikzlibrary{calc}
\usetikzlibrary{shapes, positioning}
\usetikzlibrary{backgrounds}

\allowdisplaybreaks
\expandafter\let\expandafter\oldproof\csname\string\proof\endcsname
\let\oldendproof\endproof
\renewenvironment{proof}[1][\proofname]{%
	\oldproof[\bf #1]%
}{\oldendproof}

\allowdisplaybreaks

\theoremstyle{plain}
\newtheorem{lemma}{Lemma}[section]
\newtheorem{theorem}[lemma]{Theorem}
\newtheorem{claim}[lemma]{Claim}

\newtheorem{definition}[lemma]{Definition}

\newtheorem{problem}[lemma]{Problem}

\newtheorem{construction}[lemma]{Construction}

\newtheorem{setting}[lemma]{Setting}
\newtheorem*{claim*}{Claim}

\RequirePackage[normalem]{ulem} 
\RequirePackage{color}\definecolor{RED}{rgb}{1,0,0}\definecolor{BLUE}{rgb}{0,0,1} 

\begin{document}

\title{Multicolor $K_r$-Tilings with High Discrepancy}

\author{
Henry Chan\thanks{Department of Mathematics, University of Toronto, Canada. \emph{Email}: \href{hokhin.chan@mail.utoronto.ca}{hokhin.chan@mail.utoronto.ca}. Supported by an Undergraduate Student Research Award.}
\and 
Daniel Cheng\thanks{Department of Mathematics, University of Toronto, Canada. \emph{Email}: \href{danielx.cheng@mail.utoronto.ca}{danielx.cheng@mail.utoronto.ca}. Supported by a University of Toronto Research Excellence Award.}
\and 
Lior Gishboliner\thanks{Department of Mathematics, University of Toronto, Canada.
Supported in part by the NSERC Discovery Grant ``Problems in Extremal and Probabilistic Combinatorics".
\emph{Email}: \href{mailto:lior.gishboliner@utoronto.ca}{\tt lior.gishboliner@utoronto.ca}.}
\and 
Xiangyu Li\thanks{Department of Mathematics, University of Toronto, Canada. \emph{Email}: \href{xiangyuu.li@mail.utoronto.ca}{xiangyuu.li@mail.utoronto.ca}.}
}

\date{}

\maketitle

\begin{abstract}
    We study the minimum degree threshold $\delta_{r,q}$ guaranteeing the existence of $K_r$-tilings of high discrepancy in any $q$-edge-coloring. Balogh, Csaba, Pluhár and Treglown handled the 2-color case, proving that $\delta_{r,2} = \frac{r}{r+1}$ for all $r \geq 3$. Here we determine $\delta_{r,q}$ for all $q$ large enough, namely $q \geq \binom{r}{2}$. For example, we show that for $r \geq 4$, $\delta_{r,q} = \frac{r}{r+1}$ for $\binom{r}{2} \leq q \leq \binom{r+1}{2}$ and $\delta_{r,q} = \frac{r-1}{r}$ for 
    $q \geq \binom{r+1}{2}+2$. Thus, $\delta_{r,q}$ has a phase transition at $q = \binom{r+1}{2}$, where it drops from $\frac{r}{r+1}$ and then stabilizes at the existence threshold $\frac{r-1}{r}$. We also show that $\delta_{r,q} \leq \frac{r}{r+1}$ for all $r,q$, supplementing and giving a new proof for the result of Balogh, Csaba, Pluhár and Treglown.
\end{abstract}

	\section{Introduction}


    In recent years there has been considerable interest in discrepancy problems for graphs. 
    The setting in such problems is as follows: One is a given a graph $G$ and a family $\mathcal{F}$ of subgraphs of $G$. The goal is to show that in every $q$-coloring of $E(G)$ (for some fixed number of colors $q$), there exists some $F \in \mathcal{F}$ such that the coloring of $F$ is unbalanced, namely, one of the colors appears on significantly more than $e(F)/q$ edges. Formally, we consider graphs $G$ with $q$-edge-colorings $f : E(G) \rightarrow [q]$. The pair $(G,f)$ is called a {\em $q$-edge-colored graph}. In this multicolored setting, discrepancy is defined \nolinebreak as \nolinebreak follows:
    \begin{definition}[Discrepancy]
    Let $(F,f)$ be a $q$-edge-colored graphs. The discrepancy of $(F,f)$ is the largest $t$ such that there exists a color $c \in [q]$ which appears on at least $\frac{e(F)+t}{q}$ of the edges of $F$.  
    \end{definition}

    The goal is then, given a graph $G$ and a family of subgraphs $\mathcal{F}$ of $G$, to find conditions on $G$ guaranteeing that for every $q$-edge-coloring of $G$, there exists a graph $F \in \mathcal{F}$ with high discrepancy. Some early results of this type include the work of Erd\H{o}s and Spencer \cite{ES:72}, who studied the case that $\mathcal{F}$ is the set of cliques, and Erd\H{o}s, F\"uredi, Loebl and S\'os \cite{EFLS:95}, who studied the case where $\mathcal{F}$ is the set of copies of a give spanning tree. In both of these works, $G$ is taken to be $K_n$. 

    The works \cite{BCJP:20,BCPT:21} initiated the study of discrepancy problems for general graphs $G$. By now, there are numerous results of this type, for a variety of natural types of subgraphs: perfect matchings and Hamilton cycles~\cite{BCJP:20, BCL, FHLT:21,GKM_Hamilton, GKM_trees}, spanning trees~\cite{GKM_trees,HLMP}, $H$-factors~\cite{BCPT:21,BCG:23}, powers of Hamilton cycles~\cite{Bradac:22}, 1-factorizations \cite{AHIL}, and general bounded-degree graphs \cite{BPPR,HLMP}. See also \cite{BTZ-G:24,GGS_Hamilton,GGS_STS,HLMPSTZ24+,LMX24+} for similar results for uniform hypergraphs.

    In this paper we consider the problem of finding $K_r$-tilings with large discrepancy in $q$-edge-colorings of graphs of large minimum degree. First, let us recall the definition of an $H$-tiling.

    \begin{definition}
        An {\em $H$-tiling} of a graph $G$ is a collection of vertex-disjoint copies of $H$ which partition $V(G)$. 
    \end{definition}
    Note that $H$-tilings are sometimes called $H$-factors, perfect $H$-packings or perfect $H$-matchings. 
    The following theorem is a fundamental result in extremal graph theory, determining the minimum degree threshold which guarantees the existence of $K_r$-tilings.
    \begin{theorem}
        [Hajnal-Szemerédi~\cite{HS:70}]
        Let $r \geq 2$. Let $G$ be an $n$-vertex with $n$ divisible by $r$ and with minimum degree 
        $ \delta(G) \geq \frac{r-1}{r}n$. Then $G$ contains a $K_r$-tiling.
        \label{thm:hajnal sze}
    \end{theorem}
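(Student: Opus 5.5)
The Hajnal--Szemerédi theorem (Theorem~\ref{thm:hajnal sze}) is a classical result that the paper quotes rather than proves. Still, here is how I would prove it. I would follow the short proof of Kierstead and Kostochka, which works in the equivalent complementary language. Let $H=\overline{G}$, so $\Delta(H)\le n/r-1$. A $K_r$-tiling of $G$ is exactly an equitable colouring of $H$ with $n/r$ colours, meaning every colour class is an independent set of size exactly $r$. So it suffices to prove that every graph with maximum degree $\Delta$ has an equitable $(\Delta+1)$-colouring. I would set $k=\Delta+1$ and pad $H$ with isolated vertices so that $k \mid |V(H)|$; in our setting no padding is needed.

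The argument is by induction on the number of edges. Removing an edge $xy$, the induction hypothesis gives an equitable $k$-colouring of $H-xy$. If $x$ and $y$ get different colours we are done. Otherwise, since $\deg(x)\le\Delta<k$, some class $W$ contains no neighbour of $x$, and we move $x$ into $W$. This yields a \emph{nearly equitable} colouring: one class $V^-$ of size $s-1$, one class $V^+$ of size $s+1$, and all other classes of size $s$, where $s=|V(H)|/k$.

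The core step is to repair a nearly equitable colouring. I would build an auxiliary digraph on the colour classes, with an arc $X\to Y$ whenever some vertex of $X$ has no neighbour in $Y$, so that it can be moved to $Y$. Let $\mathcal{A}$ be the set of classes that can reach $V^-$. If $V^+\in\mathcal{A}$, shifting one vertex along each arc of a directed path from $V^+$ to $V^-$ makes the colouring equitable. Otherwise, put $a=|\mathcal{A}|$ and $b=k-a$, and let $\mathcal{B}$ be the remaining classes. Every vertex $y$ in a class of $\mathcal{B}$ has a neighbour in each class of $\mathcal{A}$, so $\deg(y)\ge a$. A counting argument on edges between $\bigcup\mathcal{A}$ and $\bigcup\mathcal{B}$ then produces a \emph{solo} vertex $z$ in some class $X\in\mathcal{A}$: either $z$ has exactly one neighbour $y$ in some class $Y\in\mathcal{B}$, or the structure is such that $z$ can be swapped with $y$. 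The swap creates a new arc that enlarges $\mathcal{A}$. An induction on $b$ (or on $q$ in the paper's refinement) finishes this step. The accounting uses $\deg\le k-1$: vertices in $\mathcal{A}$-classes have few neighbours outside them, and a double count of the edges forces the required solo edge.

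The main obstacle is this last case. One needs a careful choice of the terminal class $X$ of $\mathcal{A}$, namely one that is reachable and accessible. One also needs the discharging or counting argument showing that some solo vertex admits a swap which preserves a nearly equitable colouring while strictly improving the potential $(b,\dots)$. I would follow Kierstead--Kostochka's lemma, which distinguishes whether some $y\in\bigcup\mathcal{B}$ has many solo neighbours, and recurse on the smaller instance formed by the $\mathcal{B}$-classes together with the moved vertices. Since the paper simply cites the theorem~\cite{HS:70}, in context I would invoke it directly.
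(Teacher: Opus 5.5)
The paper gives no proof of this statement. It is the classical theorem of \cite{HS:70}, quoted and then used as a black box (for example in the proofs of Lemmas~\ref{lem:template cleaning} and~\ref{lem:lower_bound_color}). So your decision to invoke it directly is exactly what the paper does, and nothing more is needed there.

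Your Kierstead--Kostochka outline is a different, self-contained route, but as written it is a roadmap rather than a proof.

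\emph{What is right.} The reduction to equitable colouring works once you use the form ``maximum degree at most $k-1$ implies an equitable $k$-colouring'' with $k=n/r$. Setting $k=\Delta(\overline{G})+1$ literally would give too few, too large classes whenever $\Delta(\overline{G})<n/r-1$. The opening steps are also correct: the edge induction, the nearly equitable colouring, the class digraph, shifting along a path, and $d_A(y)\ge a$ for $y\in B$.

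\emph{What is missing.} The decisive case $V^+\notin\mathcal{A}$ is only gestured at, and the key notion is stated backwards. In Kierstead--Kostochka, an edge $zy$ with $z$ in a \emph{terminal} class $Z$ and $y\in B$ is solo when $N_Z(y)=\{z\}$. That is, $y$ has $z$ as its unique neighbour in $Z$, so $y$ can take $z$'s place once $z$ has been moved out. Your condition, that $z$ has exactly one neighbour $y$ in a class $Y\in\mathcal{B}$, is the reverse. Terminality of $Z$ is what keeps the other accessible classes able to reach $V^-$ after $Z$ is altered.

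So your sketch stands only by importing their lemma wholesale. That is no different in kind from citing \cite{HS:70}, which is what both you and the paper ultimately do.
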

    Note that the bound above is tight; that is, there exists graphs $G$ with 
    $\delta(G) = \frac{r-1}{r}n - 1$ and no $K_r$-tiling. Indeed, consider the complete $r$-partite graph with parts of sizes $\frac{n}{r}-1,\frac{n}{r}+1,\frac{n}{r},\dots,\frac{n}{r}$ (assuming that $r$ divides $n$). This graph has minimum degree $\frac{n}{r}-1$, but no $K_r$-tiling (because every copy of $K_r$ must intersect all parts, but the smallest part has size $\frac{n}{r}-1$, while a $K_r$-tiling has size \nolinebreak $\frac{n}{r})$.
    
    Balogh, Csaba, Pluhár and Treglown \cite{BCPT:21} studied the existence of $K_r$-tilings with high discrepancy in 2-edge-colored-graph. 
    They showed that for $r \geq 3$, in any 2-edge-coloring of an $n$-vertex graph $G$ with $\delta(G) \geq (\frac{r}{r+1} + \gamma)n$ and $n$ divisible by $r$, there is a $K_r$-tiling with discrepancy $\Omega(n)$.\footnote{The case $r=2$ is different: The threshold for a perfect matching with high discrepancy is $\delta = \frac{3}{4}$; see \cite{BCJP:20} and also \cite{FHLT:21,GKM_trees} for the multicolor case.} 
    They also showed that the constant $\frac{r}{r+1}$ is best possible.
    The main objective of this paper is to generalize this work to the multicolored case. Namely, we consider the following minimum degree thresholds:
    \begin{definition}[$\delta_{r,q}$]\label{def:delta threshold}
        For $r \geq 3$ and $q \geq 2$, let $\delta_{r,q}$ be the infimum $\delta > 0$ such that there exists an $\zeta > 0$, so that for any large enough $n$, for any $n$-vertex graph $G$ with $r \mid n$ and $\delta(G) \geq \delta n$, and for any $q$-coloring $f : E(G) \rightarrow [q]$, there is a $K_r$-tiling in $(G,f)$ with discrepancy at least $\zeta n$.
    \end{definition}

    The aforementioned result of Balogh et al.~\cite{BCPT:21} gives $\delta_{r,2} = \frac{r}{r+1}$ for all $r \geq 3$. As our first result, we show that $\frac{r}{r+1}$ is an upper bound for any number of colors $q$. 
    
    \begin{theorem}\label{thm:general upper bound}
        For all $r \geq 3$ and $q \geq 2$, $\delta_{r,q} \leq \frac{r}{r+1}$.
    \end{theorem}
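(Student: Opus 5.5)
Write $\delta(G)\ge (\frac{r}{r+1}+\gamma)n$ and fix a $q$-coloring $f$. The plan is the usual \emph{switching} argument. Start from an arbitrary $K_r$-tiling $\mathcal{T}$, which exists by Theorem~\ref{thm:hajnal sze} since $\frac{r}{r+1}>\frac{r-1}{r}$. If some color $c$ already satisfies $|f^{-1}(c)\cap E(\mathcal{T})| \ge \frac{e(\mathcal{T})}{q}+\zeta n$, we are done. Otherwise every color has count within $O(\zeta n)$ of $e(\mathcal{T})/q$. We will then find $\Omega(n)$ vertex-disjoint local modifications, each changing the count of one fixed color $c$ by a nonzero amount of the same sign. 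Performing all of them shifts $c$ by $\Omega(n)$, so either the new tiling or the old one has discrepancy $\Omega(n)$.

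The local gadget is as follows. Take a vertex set $S$ of size $r(r+1)$ that can be tiled by $K_r$'s in two different ways $\mathcal{A},\mathcal{B}$ (e.g.\ $r+1$ copies of $K_r$ arranged along a $K_{r+1}$-like structure) with differing color-$c$ counts. The simplest version uses a copy $K$ of $K_{r+1}$ together with $r-1$ further cliques. The $r+1$ ways of deleting one vertex $v$ from $K$ give $K_r$'s $K-v$ whose color profiles differ unless all edges of $K$ incident to each vertex have the same multiset of colors. So we argue:
\begin{enumerate}[(i)]
\item By the minimum degree $\frac{r}{r+1}n$ and standard supersaturation (e.g.\ via regularity and the fact that any $r+1$ vertices have $\ge (\frac{1}{r+1}+\ldots)n$ common-ish neighbourhoods), $G$ contains $\Omega(n^{r+1})$ copies of $K_{r+1}$. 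Moreover, for a typical tile $T\in\mathcal{T}$ and vertex $x$, there are many vertices $y$ adjacent to all of $T-\{t\}\cup\{x\}$.
\item Concretely, for two tiles $T_1,T_2\in\mathcal{T}$ we look for swaps: choose $u\in T_1$, $w\in T_2$ with $u$ adjacent to $T_2-w$ and $w$ adjacent to $T_1-u$. Exchanging gives a new pair of $K_r$'s. The degree bound $\frac{r}{r+1}$ is exactly what guarantees, by averaging degrees into $T_1\cup T_2$ ($2r$ vertices, each with $>\frac{2r\cdot r}{r+1}$ expected neighbours), that many such swappable configurations exist — indeed a vertex outside sees more than $2r\cdot\frac{r}{r+1}>2r-2$ vertices of $T_1\cup T_2$ on average. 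We then use a cyclic rotation among $r+1$ tiles to make every vertex class movable.
\item If every such swap preserves the color-$c$ count for every color $c$, we deduce strong rigidity constraints: the color of edge $ux$ must equal that of $wx$ for all relevant $x$. Propagating these constraints through the many available swaps forces that, on a linear-sized structure, colors depend only on the ``position'' of endpoints. This is incompatible with $q\ge 2$ colors each appearing roughly $e(\mathcal{T})/q$ times, or we directly find a $K_{r+1}$ with non-uniform coloring which yields a discrepancy-changing swap.
\end{enumerate}

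Finally, we greedily select $\Omega(n)$ disjoint profitable swaps, all with the same color $c$ and the same sign by pigeonhole over $q$ colors and $2$ signs. Disjointness is possible because each swap uses $O(1)$ tiles and there are $\Omega(n)$ candidates spread out (remove used tiles, degree conditions persist with slack $\gamma/2$).

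The main obstacle is step (iii): showing that if no swap changes any color count, the coloring is forced into a structure that itself exhibits the required imbalance. I would first try to prove that every $K_{r+1}$ in $G$ (of which there are many, covering most vertices by step (i)) must be ``vertex-color-regular'', and then show via overlapping $K_{r+1}$'s sharing $r$ vertices that all of $G$'s edges get a single color locally. This would contradict balance when $q\ge2$.
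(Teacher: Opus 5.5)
Your outer framework matches the paper's: when profitable local gadgets are plentiful you switch on $\Omega(n)$ disjoint copies, and otherwise you exploit rigidity. But there is a genuine gap in step (iii), which is the entire content of the theorem, and the route you sketch for it cannot work.

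\textbf{Step (iii) never uses the strict inequality.} In your plan the degree bound is used only to produce swaps, and the averaging $2r\cdot\frac{r}{r+1}>2r-2$ holds equally at $\delta(G)=\frac{r}{r+1}n$. Yet rigidity and balance are compatible at the threshold. Take Construction~\ref{construction 1} with the parameters used for $\binom{r}{2}\le q\le\binom{r+1}{2}$, which is a balanced complete $(r+1)$-partite graph. There every $K_r$-tiling has the same, perfectly balanced color profile, so no swap of any kind is profitable. Still, its $K_{r+1}$'s are not vertex-color-regular: a vertex of $Y_k$ sees only color $k$, while a vertex of $V_1$ sees the $r-1$ distinct colors $g(\{1,j\})$. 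So the strict inequality must enter the structural step itself.

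\textbf{Your intermediate target is also wrong.} Even far above the threshold, rigidity does not give vertex-color-regularity. In $K_n$ with the star at one vertex $y$ recolored, every tiling has the same profile, yet the $K_{r+1}$'s through $y$ are not regular.

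The correct consequence of rigidity is a \emph{large monochromatic set}. Lemma~\ref{lem:aux_main} shows this as follows when there are no $K_r$-templates and $\delta(G)\ge\frac{r}{r+1}|G|+3$:
\begin{itemize}
\item Tur\'an's theorem gives copies of $K_{r+2}$.
\item Each of them is monochromatic or a $2$-colored star (Lemma~\ref{lem:very simple kr+2}, from balancedness of $4$-cycles in common neighbourhoods of $(r-2)$-cliques).
\item Propagating along a chain of $(r+1)$-cliques inside $N(v)$, each forming a $K_{r+2}$ with $v$ and consecutive ones sharing an edge, shows that $N(v)$ is monochromatic for a suitable $v$. The star case needs a separate color-transfer argument.
\end{itemize}

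\textbf{The counting step is missing.} A monochromatic region does not by itself ``contradict balance''. You need Lemma~\ref{lem: extremal tiling main}: every $K_r$-tiling has at least a $\frac{2|U|}{n}-1$ fraction of its edges inside $U$. For $|U|\ge\delta(G)\ge(\frac{r}{r+1}+\gamma)n$ this fraction is at least $\frac{r-1}{r+1}+2\gamma\ge\frac{1}{2}+2\gamma\ge\frac{1}{q}+2\gamma$. This is where $\frac{r}{r+1}$ is sharp.

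\textbf{Your dichotomy is not exhaustive.} You split into ``$\Omega(n)$ disjoint profitable swaps'' versus ``every swap preserves all color counts''. Profitable gadgets may exist but be too few or too concentrated to give $\Omega(n)$ disjoint ones, and then no exact rigidity is available either. The paper closes this with Lemma~\ref{lem:template cleaning}:
\begin{itemize}
\item If some template $F$ has $\Omega(n^{v(F)})$ copies, supersaturation gives $\Omega(n)$ disjoint copies of its blow-up $F^+$. Each has two tilings with different profiles, and this is essentially your switching step.
\item Otherwise the removal lemma deletes $o(n^2)$ edges to destroy all templates.
\item The few vertices meeting many deleted edges are removed together with an absorbing set (Lemma~\ref{lemma:absorb}), which guarantees the removed part is $K_r$-tileable.
\end{itemize}
The exact structural argument above is then run on the cleaned graph, at the cost of only an $\varepsilon$-loss in the final fraction.
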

    \noindent
    We note that our proof is somewhat different than that of \cite{BCPT:21}, thus giving a new proof for the case $q=2$.

    As our main result, we determine $\delta_{r,q}$ for all $q$ large enough, namely, for all $q \geq \binom{r}{2}$. Interestingly, the threshold $\delta_{r,q}$ has a phase transition at $q = \binom{r+1}{2}+1$, where it decreases from its maximal possible value of $\frac{r}{r+1}$. Furthermore, $\delta_{r,q}$ then stabilizes at its minimum possible value, namely, the existence threshold $\frac{r-1}{r}$. The precise result for $r \geq 4$ is as follows. 

    \begin{theorem}\label{thm:r=4}
        Let $r \geq 4$. Then
        $$
        \delta_{r,q} = 
        \begin{cases}
            \frac{r}{r+1} & \binom{r}{2} \leq q \leq \binom{r+1}{2}, \\
            \frac{r^2+1}{r^2 + r + 2}
            & q = \binom{r+1}{2} + 1, \\
            \frac{r-1}{r} & q \geq \binom{r+1}{2}+2.
        \end{cases}
        $$
    \end{theorem}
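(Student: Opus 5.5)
Theorem~\ref{thm:r=4} has three regimes, and each needs a lower-bound construction and a matching upper bound. The general tool I would use for upper bounds is a switching argument. Suppose every $K_r$-tiling of $(G,f)$ has discrepancy $o(n)$. Take any two tilings that differ only on a bounded number of vertices, for example two tilings of the same $(r+1)$ vertices plus one further clique. Their colour profiles must then agree. Minimum degree $\delta n$ with $\delta$ above the relevant threshold lets us find many such local configurations: $K_{r+1}$'s, or a $K_r$ together with a vertex adjacent to $r-1$ of its vertices, and so on. Each can be completed to a $K_r$-tiling by Theorem~\ref{thm:hajnal sze}, applied to the rest of the graph after a small absorbing or regularity step. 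Local switches then force strong constraints, namely that many copies of $K_r$ have identical colour multisets. A supersaturation and regularity argument then shows the coloring is so structured that some tiling has linear discrepancy.

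\emph{Lower bounds.} For $q \le \binom{r+1}{2}$ I would generalise the construction of Balogh et al. Take the complete $(r+1)$-partite graph with roughly equal parts, suitably perturbed, so that $\delta \approx \frac{r}{r+1}n$. Colour edges by the pair of parts they join, merging pairs when $q<\binom{r+1}{2}$. Every $K_r$ misses exactly one part, and counting shows each colour class then appears on the same number of edges in every tiling, so the discrepancy is $O(1)$. Here $q \ge \binom{r}{2}$ is needed so that every $K_r$ can be made balanced. For $q=\binom{r+1}{2}+1$ I would use a weighted version of the same construction. Take an $(r+1)$-partite structure with one part of different size, or one part internally sparse, and tune the proportions so the extra colour is balanced. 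Optimising the part sizes should produce $\delta=\frac{r^2+1}{r^2+r+2}$. For $q\ge\binom{r+1}{2}+2$ the lower bound $\frac{r-1}{r}$ is trivial, since below that threshold tilings need not exist.

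\emph{Upper bounds.} For $q\le\binom{r+1}{2}$ we already have Theorem~\ref{thm:general upper bound}. For $q\ge\binom{r+1}{2}+2$ at $\delta=(\frac{r-1}{r}+\gamma)n$, I would proceed in three steps. First, apply the multicolour regularity lemma to get a reduced graph $R$ of minimum degree above $\frac{r-1}{r}|R|$. Second, argue that if all tilings are nearly balanced, each colour must have density about $1/q$ in a fractional $K_r$-tiling sense. Third, show via the local switches that the colours on $K_r$'s are governed by a labelling of vertices and edges into at most $\binom{r+1}{2}+1$ "types". This is impossible with $q$ colours each balanced, and a pigeonhole argument then yields an unbalanced colour. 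The intermediate case $q=\binom{r+1}{2}+1$ follows the same route, but the structure must be quantified: the extremal structure is forced, and above the stated density it fails.

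The main obstacle is this structural classification step, showing that low discrepancy forces the colouring to be essentially "pair-of-parts" with at most $\binom{r+1}{2}$ (respectively $+1$) effective colours. I would first attack it in the reduced graph via linear algebra. Each $K_r$ in $R$ gives a colour-count vector, and low discrepancy means these vectors lie in an affine subspace spanned by few generators. I would then bound the dimension using the rich clique structure guaranteed by the minimum degree.
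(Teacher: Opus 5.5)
There is a genuine gap: the upper bound for $q>\binom{r+1}{2}$ is not proved. Your ``structural classification'' into at most $\binom{r+1}{2}+1$ types is named as the main obstacle and left open, and nothing in the sketch produces the values $\frac{r^2+1}{r^2+r+2}$ and $\frac{r-1}{r}$. Your switching principle is also misstated. Two tilings differing on $O(1)$ vertices differ in colour counts by only $O(1)$, which is compatible with discrepancy $o(n)$. What you need is linearly many vertex-disjoint switchable gadgets via supersaturation, and otherwise a removal-lemma cleaning to a gadget-free graph plus an absorber.

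The missing idea is that no global classification is needed. Call an unbalanced $4$- or $6$-cycle in the common neighbourhood of a $K_{r-2}$ a $K_r$-template. In a graph with no $K_r$-template and $\delta(G)\ge\frac{r-1}{r}|G|+3$, one finds a set $U$ with $|U|\ge\delta(G)$ spanning at most $\binom{r}{2}$ colours. Either $U=N(v)$, obtained by transferring colour profiles along chains of $r$-cliques using the bowtie identities, or $U$ is the set of vertices with $r$ neighbours in a fixed $(r+1)$-clique having no monochromatic vertex. Every $K_r$-tiling has at least a $(2|U|/n-1)$-fraction of its edges inside $U$. So some colour has frequency roughly $(2\delta-1)/\binom{r}{2}$, and this exceeds $\frac1q$ exactly when $\delta>\frac12+\frac{r(r-1)}{4q}$. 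That quantity equals $\frac{r^2+1}{r^2+r+2}$ at $q=\binom{r+1}{2}+1$. It is at most $\frac{r-1}{r}$ for all $q\ge\binom{r+1}{2}+2$, and this is precisely where $r\ge4$ is used.

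Your lower bound for $\binom{r}{2}\le q<\binom{r+1}{2}$ also fails. In the balanced complete $(r+1)$-partite graph, every $K_r$-tiling has exactly $\frac{(r-1)n}{r(r+1)}$ edges between each pair of parts. So a colour assigned $p_c$ pairs appears $p_c\frac{(r-1)n}{r(r+1)}$ times in every tiling, and low discrepancy forces all $p_c$ to be equal, i.e.\ $q\mid\binom{r+1}{2}$. For $r\ge4$ no proper divisor of $\binom{r+1}{2}$ is at least $\binom{r}{2}$, so you only cover the endpoint $q=\binom{r+1}{2}$. Perturbing part sizes by $\varepsilon n$ cannot repair a $\Theta(n)$ imbalance without losing $\Theta(n)$ in minimum degree.

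A construction that works colours only the $\binom{r}{2}$ pairs among $V_1,\dots,V_r$, bijectively; this is where $q\ge\binom{r}{2}$ is used. It then splits $V_{r+1}$ into $Y_1,\dots,Y_q$, gives every edge touching $Y_k$ colour $k$, and takes $|Y_k|=\frac{n}{2q}-\frac{n}{r(r+1)}$ for $k\le\binom{r}{2}$ and $|Y_k|=\frac{n}{2q}$ otherwise. If instead $|Y_k|=0$ for $k\le\binom{r}{2}$, then $V_{r+1}$ becomes the largest part, of size $(\frac12-\frac{r(r-1)}{4q})n$. This gives the lower bound $\frac12+\frac{r(r-1)}{4q}$, matching the upper bound above. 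Your description of the $q=\binom{r+1}{2}+1$ case (``one part of different size \dots\ tune the proportions'') does not specify a colouring, so it cannot be checked.
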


    For $r=3$, the result is slightly different, in that there are two values of $q$ (instead of just one) where $\delta_{r,q}$ lies strictly between $\frac{r}{r+1}$ and $\frac{r-1}{r}$.

    \begin{theorem}\label{thm:r=3}
        For $r=3$, $\delta_{3,q} = \frac{3}{4}$ for $2 \leq q \leq 6$, $\delta_{3,7} = \frac{5}{7}$, $\delta_{3,8} = \frac{11}{16}$, and $\delta_{3,q} = \frac{2}{3}$ for $q \geq 9$.
    \end{theorem}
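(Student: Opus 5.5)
For each range of $q$ we prove the matching lower bound by an explicit colored construction and the matching upper bound by a regularity-plus-switching argument. Throughout we use the reformulation that a $q$-colored graph has \emph{no} $K_3$-tiling of discrepancy $\Omega(n)$ iff every $K_3$-tiling uses each of the $q$ colors on $(1\pm o(1))n/q$ edges. Since a tiling has exactly $n$ edges, this says every tiling is nearly \emph{color-balanced}. Every argument below aims either to build a graph in which all tilings are forced to be balanced, or to show that two tilings must have different color profiles. Two tilings with profiles differing by $\Omega(n)$ already suffice: one of them then has discrepancy $\Omega(n)$.

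\textbf{Upper bounds.} The bound $\delta_{3,q}\le \frac34$ for $2\le q\le 6$ is Theorem~\ref{thm:general upper bound}.

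For $q\ge 7$ we apply the multicolor regularity lemma and work in the reduced graph $R$, whose edges carry the set of dense colors. By Theorem~\ref{thm:hajnal sze} (applied robustly), $R$ has an almost perfect fractional or integral triangle tiling. We then look for \emph{local switchings}, meaning small configurations carrying two different triangle tilings:
\begin{itemize}
\item a $K_4$, where one triangle is traded for another;
\item a $K_4$ plus a disjoint triangle;
\item two triangles that can be re-paired into two other triangles on the same six clusters.
\end{itemize}
If some switching changes the color profile, then repeating it $\Omega(n)$ times inside a fixed tiling gives high discrepancy. So we may assume every available switching preserves the profile.

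For a $K_4$ this forces all four stars to have the same color vector. Hence each $K_4$ is either monochromatic or uses at most $3$ colors via a $1$-factorization. Iterating over overlapping $K_4$'s and switching gadgets yields a rigid global structure: essentially a partition of $V(R)$ into parts on which only a bounded number of colors (at most $6$) can appear inside triangles.

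The balance requirement then says all $q$ colors occur equally often in every tiling. When $q$ exceeds the number of color classes that can be supported by triangles in this structure, this is impossible. Comparing the sizes of the parts with the minimum degree then gives a linear inequality in $\delta$. Optimizing this inequality should produce exactly $\frac57$ for $q=7$, $\frac{11}{16}$ for $q=8$, and $\frac23$ (i.e.\ no extra condition beyond existence) for $q\ge 9$.

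\textbf{Lower bounds.} For $q\ge 9$ the lower bound is the Hajnal--Szemer\'edi extremal example following Theorem~\ref{thm:hajnal sze}. For $2\le q\le 6$, take a near-extremal graph for $\frac34$. A natural choice is a complete $4$-partite-like graph with parts of size about $n/4$, plus a slightly unbalanced part structure, so that every triangle tiling must use the parts in a fixed pattern. Then color pairs of parts by $\binom42=6$ colors, merging colors when $q<6$, so that every tiling has identical color counts up to $o(n)$. For $q=7,8$ we design weighted blow-ups: split into parts $V_1,\dots,V_k$ with prescribed sizes, where the extra colors (the $7$th, $8$th) are used on edges inside parts, with densities chosen so that every triangle tiling is forced to use each color equally. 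The part sizes are chosen so that $\delta$ is as large as possible subject to this forcing. Solving the resulting small linear program should give minimum degree $(\frac57-o(1))n$ and $(\frac{11}{16}-o(1))n$ respectively.

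\textbf{Main obstacle.} The hardest step is the upper bound for $q=7,8$: classifying the profile-preserving structures in $R$ and proving that the resulting optimization gives exactly $\frac57$ and $\frac{11}{16}$. My first attempt would be to classify the colorings of $K_4$ with all stars having equal color vectors, then show by connectivity of $K_4$'s (guaranteed once $\delta>\frac23$) that these local patterns glue into a global blow-up. After that, the problem reduces to a finite fractional-tiling LP whose extremal solutions match the constructions.
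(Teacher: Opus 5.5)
Your proposal has genuine gaps. The main one is in the upper bound for $q\ge 7$, which is where the whole content of the theorem lies. The step ``iterating over overlapping $K_4$'s \dots\ yields a rigid global structure \dots\ optimizing should produce exactly $\frac57,\frac{11}{16},\frac23$'' is not an argument, and its $K_4$ input is not derived. Trading triangle $ABC$ for $BCD$ is not a closed switch: it uncovers one vertex and double-covers another. Moreover, in a blow-up of $K_4$ with equal parts the four triangle types are forced to occur equally often, so the conclusion ``all four stars have the same colour vector'' does not follow.

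More seriously, the structure you aim for cannot give the stated numbers. The thresholds are exactly the solutions of $\frac{2\delta-1}{3}=\frac1q$, i.e.\ $\delta=\frac12+\frac{3}{2q}$. They come from two ingredients you are missing:
\begin{enumerate}
\item[(i)] Remove templates first; for $r=3$ a template is a vertex with an unbalanced $4$- or $6$-cycle in its neighbourhood, and your six-cluster re-pairing is the $4$-cycle case. Then a template-free graph with $\delta(G)\ge\frac23|G|+3$ contains a \emph{single} set $U$ with $|U|\ge\delta(G)$ spanning at most $\binom{3}{2}=3$ colours (Lemma~\ref{lem:main}; its proof via bowties, clique chains and colour transfer is the bulk of the work).
\item[(ii)] Every $K_3$-tiling has at least a $(2|U|/n-1)$-fraction of its edges inside $U$ (Lemma~\ref{lem: extremal tiling main}).
\end{enumerate}
Pigeonhole then gives a colour on a $\frac{2\delta/n-1}{3}$ fraction of the tiling's edges. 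With ``at most $6$'' colours in place of $3$, the same count only beats $\frac1q$ when $\delta>(\frac12+\frac3q)n$, which is at least $\frac34 n$ for every $q\le 12$. And if you meant that only $6$ colours appear on all triangles, that would force $\delta_{3,7}=\frac23$, contradicting the $q=7$ lower bound.

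The lower bounds also have holes. For $q\in\{4,5\}$, merging the six pair-classes of a balanced $K_{m,m,m,m}$ fails. Every triangle tiling uses exactly $n/6$ edges between each pair of parts, and $n/q$ is not a multiple of $n/6$; an $o(n)$ imbalance does not help. For $q=7,8$ you give no construction, and putting colours on edges \emph{inside} parts destroys the rigidity that pins down every tiling's profile.

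What works is Construction~\ref{construction 1}:
\begin{itemize}
\item Keep $G$ complete $4$-partite, and colour $E(V_i,V_j)$ for $1\le i<j\le 3$ with colours $1,2,3$.
\item Split the independent fourth part into $Y_1,\dots,Y_q$, colouring every edge at $Y_k$ with colour $k$.
\item Each triangle meets the fourth part at most once. So every tiling has exactly $2|Y_k|$ edges of colour $k$ there, and a tiling-independent number of edges inside each pair $V_iV_j$.
\item For $3\le q\le 6$, take $|Y_k|=(\frac{1}{2q}-\frac{1}{12})n$ for $k\le 3$ and $\frac{n}{2q}$ otherwise. This gives a balanced $4$-partite graph with $\delta=\frac34 n$ and every tiling balanced.
\item For $q=7,8$, take $|Y_1|=|Y_2|=|Y_3|=0$ and $|Y_k|=\frac{n}{2q}$ for $k\ge 4$. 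This gives $\delta(G)=(\frac12+\frac{3}{2q})n$, i.e.\ $\frac57 n$ and $\frac{11}{16}n$, again with every tiling balanced.
\end{itemize}
Your lower bounds for $q\in\{2,3,6\}$ and $q\ge 9$, and the upper bound for $q\le 6$ via the general $\frac34$ bound, are fine.
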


    Finally, we consider the range of small $q$, i.e., $q < \binom{r}{2}$, and show that the upper bound of $\frac{r}{r+1}$ is tight for all $q$ in this range satisfying a certain arithmetic condition. 

    \begin{theorem}\label{thm:divisibility condition}
    Let $r \geq 3$ and $2 \leq q \leq \binom{r}{2}$. Write $\binom{r}{2} = a \cdot q + b$ where $b < q$. If 
    $r+b \geq q$ or $b = 0$ then $\delta_{r,q} = \frac{r}{r+1}$.
    \end{theorem}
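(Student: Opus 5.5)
The upper bound $\delta_{r,q} \le \frac{r}{r+1}$ is exactly Theorem~\ref{thm:general upper bound}, so I only need the matching lower bound $\delta_{r,q} \geq \frac{r}{r+1}$ under the hypothesis ``$r+b \ge q$ or $b=0$''. It is this direction where the hypothesis does its work. Concretely, for every fixed $\delta < \frac{r}{r+1}$ and every $\zeta>0$, I want, for all large $n$, an $n$-vertex graph $G$ with $\delta(G)\ge \delta n$ and a $q$-colouring in which \emph{every} $K_r$-tiling has discrepancy $o(n)$, and ideally exactly $0$.

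\textbf{Rigid host graph.} I would take $G$ to be a complete $(r+1)$-partite graph with parts $V_1,\dots,V_{r+1}$ of sizes $|V_i| = \frac{n}{r+1}+x_i n$. Here $\sum_i x_i=0$ and $|x_i| \le \eta$ for a small $\eta=\eta(\delta)>0$, so that $\delta(G)\ge \delta n$. The point is rigidity. Every $K_r$ in $G$ meets exactly $r$ of the parts, one vertex in each, so it misses exactly one part. If $t_j$ denotes the number of tiles missing $V_j$, then counting vertices of $V_i$ gives $|V_i| = \frac{n}{r}-t_i$. Hence $t_i=\frac nr-|V_i|$ is the same for every $K_r$-tiling. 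Consequently the number of tiles using a given pair $\{V_i,V_j\}$ is $|V_i|+|V_j|-\frac nr$, which is also independent of the tiling.

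\textbf{Colouring.} If each pair of parts were monochromatic, every tiling would have the same colour profile. A colour $c$ would then appear exactly $\sum_{\{i,j\}\in P_c}\bigl(|V_i|+|V_j|-\frac nr\bigr)$ times, where $P_c$ is the set of pairs of colour $c$. With all $x_i=0$ every pair has weight $\frac{(r-1)n}{r(r+1)}$, so balance would force $q\mid\binom{r+1}{2}$. That is too restrictive, so I would use the two remaining freedoms. The first is the perturbation $x_i$ of the part sizes, which changes pair weights linearly. The second is to refine the colouring inside a pair by splitting a part into sub-parts: edges between $V_i$ and $V_j$ are coloured according to which sub-part of $V_i$ the endpoint lies in. The tile-type counts remain forced, since each vertex of $V_i$ lies in exactly one tile and that tile meets every other part except one. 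To keep the split rigid as well, I would make the star colours at a vertex depend only on its own sub-part and the target part. A vertex $v$ in a sub-part then contributes a fixed colour vector over the $r-1$ edges it sends to the parts its tile meets, and only which part is missed varies.

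\textbf{The arithmetic.} Write the tiles missing $V_{r+1}$ as full $K_r$'s on $V_1,\dots,V_r$; these carry $\binom r2=aq+b$ edges. I would colour the pairs inside $[r]$ so that each colour gets $a$ pairs and $b$ colours get one extra pair. The star edges at $V_{r+1}$ and the sub-part refinement are then used to compensate. A tile containing $v\in V_{r+1}$ and missing $V_j$ loses the $r-1$ pairs through $j$ but gains $r-1$ star edges at $v$. The plan is to choose colours for these stars, together with the sub-part proportions and the $x_i$, so that the total count of every colour is exactly $e/q$, where $e=\binom r2\cdot\frac nr$. The case $b=0$ should be easy: colour all $\binom{r+1}{2}$ pairs so that the pairs inside $[r]$ are split evenly, and colour the $r$ stars at $V_{r+1}$ so that each tile missing some $V_j$ is itself balanced. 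In the case $r+b\ge q$, the $b$ leftover pairs plus the $r$ star directions give at least $q$ ``slots''. This should let every colour absorb its deficit by a linear choice of the proportions. I would set this up as a small linear system in the $x_i$ and the sub-part proportions and verify it has a solution with all parameters in $[-\eta,\eta]$.

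\textbf{Main obstacle.} I expect the hardest step to be this last feasibility check. The construction must make all $q$ colour totals exactly equal, with perturbation size bounded by $\eta$, which may be arbitrarily small. This requires the balanced point (all $x_i=0$) to be reachable: the unperturbed configuration must already be exactly balanced, and the perturbations are only used to hit integrality. I would first try to find an exactly balanced colouring with equal parts, using only sub-part splits. This is where the condition $r+b\ge q$ should guarantee enough distinct colour slots.
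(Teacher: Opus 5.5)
\textbf{There is a genuine gap.} Your reduction is right. The upper bound is Theorem~\ref{thm:general upper bound}. The lower bound should come from a complete $(r+1)$-partite graph in which the number of tiles using each pair of parts is forced; this is also the paper's framework (Construction~\ref{construction 2}, Lemma~\ref{lem:thm 1.8 lo}).

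However, the proposal stops exactly where the content of the theorem lies. You never specify the colouring or the (sub-)part sizes, and you never show how ``$r+b\ge q$ or $b=0$'' makes the balance equations solvable.

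Moreover, both devices you sketch for this step fail as stated.
\begin{itemize}
\item \emph{Target-dependent star colours break rigidity.} A tiling forces only how many vertices of $V_{r+1}$ lie in tiles missing each $V_j$, not how those vertices are distributed among the sub-parts. For example, suppose $v\in S$ sends colour $A$ to $V_1$ and colour $B$ to all other parts, while $v'\in S'$ sends $B$ everywhere. Swapping which of $v,v'$ sits in the tile missing $V_1$ (the other sitting in the tile missing $V_2$) is a valid retiling and changes the number of $A$-edges by one. Doing this in linearly many disjoint places changes the profile by $\Theta(n)$, so one of the two tilings has linear discrepancy.
\item \emph{Your $b=0$ plan is impossible in general.} Making each tile that misses some $V_j$ individually balanced requires every $K_r$ inside the corresponding coloured $K_{r+1}$ pattern to carry exactly $a$ edges of each colour. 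Double counting gives $(r+1)a=(r-1)e_c$, so every colour must occupy $\binom{r+1}{2}/q$ edges of that $K_{r+1}$. This fails, e.g., for $r=4$, $q=3$.
\end{itemize}

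\textbf{The missing idea} is to make each sub-part's star \emph{monochromatic}, independent of the target part.
\begin{itemize}
\item Keep all $r+1$ parts of size exactly $\frac{n}{r+1}$, and colour the pairs inside $[r]$ so that colours $1,\dots,b$ get $a+1$ pairs and the others get $a$.
\item Split $V_{r+1}$ into $Y_1,\dots,Y_q$ and give every edge at $Y_k$ colour $k$.
\item Since $V_{r+1}$ is independent, in any tiling colour $k$ receives exactly $(r-1)|Y_k|$ edges at $Y_k$.
\item Each pair $\{i,j\}\subseteq[r]$ is used by exactly $|V_i|+|V_j|-\frac nr=\frac{(r-1)n}{r(r+1)}$ tiles.
\item Equalising the colour totals therefore requires $|Y_k|$ to be smaller by exactly $\frac{n}{r(r+1)}$ for the $b$ heavy colours.
\item Together with $\sum_k|Y_k|=\frac{n}{r+1}$, this gives $|Y_k|=\frac{(r+b-q)n}{qr(r+1)}$ for $k\le b$ and $|Y_k|=\frac{(r+b)n}{qr(r+1)}$ for $k>b$.
\end{itemize}
So the hypothesis is exactly nonnegativity of $|Y_k|$ for the heavy colours, and it is vacuous when $b=0$; it is not a count of ``slots''. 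Every tiling then has discrepancy $0$, $\delta(G)=\frac{r}{r+1}n$, and no perturbation $x_i$ is needed: just take $n$ divisible by $qr(r+1)$.

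Your framework contains this as the special case where only $V_{r+1}$ is split and star colours ignore the target part. That restriction and the explicit solution of the balance equations are what the proposal is missing.
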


    \noindent
    The main problem left open by this work is to determine $\delta_{r,q}$ in all remaining cases:
    \begin{problem}
        Determine $\delta_{r,q}$ for all $r \geq 3$ and $q \leq \binom{r}{2}$.
    \end{problem}

    \paragraph{Notation:} We use $|G|$ to denote the number of vertices of a graph $G$. For a graph $G$ and a vertex-set $K \subseteq V(G)$, we denote by $N(K)$ the common neighborhood of $K$, namely, 
    $$
    N(K) := \{v \in V(G) \mid vw \in E(G) \text{ for all } w \in K\}.
    $$
    We always assume that $r \geq 3$, unless stated otherwise.

    \section{Extremal Constructions}

    In this section, we present the constructions providing the required lower bounds of Theorems \ref{thm:r=4}, \ref{thm:r=3} and \ref{thm:divisibility condition}.
    \begin{construction}\label{construction 1}
        Let $r \geq 3$ and $q \ge \binom{r}{2}$. Let $\alpha_1, \dots, \alpha_q \in \mathbb{Q}_{\ge 0}$ such that $\alpha := \sum_{i=1}^q \alpha_i \leq 1$. Let $n \in \mathbb{N}$ be such that $\alpha_i n \in \mathbb{N}$ for all $i \in [q]$ and $\frac{(1 - \alpha)n}{r} \in \mathbb{N}$. 
        
        Let $G$ be the complete $(r+1)$-partite graph on $n$ vertices with partition sets $V_1, \dots, V_{r+1}$, where $|V_1| = \dots = |V_r| = \frac{(1-\alpha)n}{r}$ and $|V_{r+1}| = \alpha n$. Further, partition $V_{r+1}$ into disjoint sets $Y_1, \dots, Y_q$ such that $|Y_i| = \alpha_i n$ for each $i \in [q]$.
    
        The edge coloring of $G$ is defined as follows: fix a bijection $g : \binom{[r]}{2} \rightarrow \{1, \dots, \binom{r}{2}\}$. For all $1 \leq i < j \leq r$, color the edges in $E(V_i, V_j)$ with color $g(\{i, j\})$. Finally, for each $k \in [q]$, color all edges incident to $Y_k$ with color $k$.
    \end{construction}
    \begin{construction}\label{construction 2}
    Let $r \geq 3$ and $q \leq \binom{r}{2}$, and write $\binom{r}{2} = aq + b$ for integers $a, b$ with $0 \leq b < q$. Let $\alpha_1, \dots, \alpha_q \in \mathbb{Q}_{\ge 0}$ such that $\alpha := \sum_{i=1}^q \alpha_i \leq 1$. Let $n \in \mathbb{N}$ be such that $\alpha_i n \in \mathbb{N}$ for all $i \in [q]$ and $\frac{(1 - \alpha)n}{r} \in \mathbb{N}$. 
    
    Let $G$ be the complete $(r+1)$-partite graph on $n$ vertices with partition sets $V_1, \dots, V_{r+1}$, satisfying $|V_1| = \dots = |V_r| = \frac{(1-\alpha)n}{r}$ and $|V_{r+1}| = \alpha n$. Further, partition $V_{r+1}$ into disjoint sets $Y_1, \dots, Y_q$ such that $|Y_k| = \alpha_k n$ for each $k \in [q]$.

    The edge coloring of $G$ is defined as follows: fix a function $g : \binom{[r]}{2} \rightarrow [q]$ such that $|g^{-1}(k)| = a + 1$ for $1\leq k \leq b$ and $|g^{-1}(k)| = a$ for $b < k \leq q$. For all $1 \leq i < j \leq r$, color the edges in $E(V_i, V_j)$ with color $g(\{i, j\})$. Finally, for each $k \in [q]$, color all edges incident to $Y_k$ with color $k$. 
    \end{construction}
    Figure \ref{fig:construction examples} depicts two instances of Construction \ref{construction 1}.
    \begin{figure}[H]
        \centering
        \begin{tikzpicture}[
          even_ellipse/.style={draw=black, thick, ellipse, minimum width=3.2cm, minimum height=1.3cm, align=center},
          long_ellipse/.style={draw=black, thick, ellipse, minimum width=4cm, minimum height=1.3cm, align=center},
          short_ellipse/.style={draw=black, thick, ellipse, minimum width=2cm, minimum height=1.3cm, align=center},
          dotted_circle/.style={circle, draw=black, dotted, thick, minimum size=0.6cm, inner sep=0pt},
          base_edge/.style={very thin, opacity=0.5, line cap=round},
          special_edge/.style={thin, opacity=0.8, line cap=round}
        ]
        \definecolor{Red}{RGB}{228, 26, 28}
        \definecolor{Blue}{RGB}{55, 126, 184}
        \definecolor{Green}{RGB}{77, 175, 74}
        \definecolor{Orange}{RGB}{255, 127, 0}
        \definecolor{Violet}{RGB}{152, 78, 163}
        \definecolor{Cyan}{RGB}{0, 200, 200}
        \definecolor{Magenta}{RGB}{220, 50, 180}
        
        \node[even_ellipse, fill=white] (C1) at (-2.5, 0) {};
        \node[even_ellipse, fill=white] (C2) at (2.5, 0) {};
        \node[even_ellipse, fill=white] (C3) at (0, -2.5) {};
        
        \begin{pgfonlayer}{background}
          \node[even_ellipse] (Special) at (0, 3) {};
        
          \foreach \cluster in {C1, C2, C3} {
            \node[dotted_circle, draw=white] (\cluster-1) at ($(\cluster) + (-.8, 0)$) {};
            \node[dotted_circle, draw=white] (\cluster-2) at (\cluster) {};
            \node[dotted_circle, draw=white] (\cluster-3) at ($(\cluster) + (.8, 0)$) {};
          }
        
          \node[dotted_circle, draw=Orange] (Special-1) at ($(Special) + (-.8, 0)$) {};
          \node[dotted_circle, draw=Violet] (Special-2) at (Special) {};
          \node[dotted_circle, draw=Cyan]   (Special-3) at ($(Special) + (.8, 0)$) {};
        
          \foreach \i in {1, 2, 3} {
            \foreach \j in {1, 2, 3} {
              \draw[base_edge, Red]   (C1-\i) to[bend right=25] (C2-\j);
              \draw[base_edge, Blue]  (C2-\i) -- (C3-\j);
              \draw[base_edge, Green] (C3-\i) -- (C1-\j);
            }
          }
        
          \foreach \targetCluster in {C1, C2, C3} {
            \foreach \m in {1, 2, 3} {
              \draw[special_edge, Orange] (Special-1) -- (\targetCluster-\m);
              \draw[special_edge, Violet] (Special-2) -- (\targetCluster-\m);
              \draw[special_edge, Cyan]   (Special-3) -- (\targetCluster-\m);
            }
          }
        \end{pgfonlayer}
        
        \node[align=center, text width=7cm] at (0, -4.5) {\small $r=3$, $q=6=\binom{r+1}{2}$, $\alpha=\tfrac14$, and\\ \small $\alpha_i = 0$ for $1\le i \le 3$, $\alpha_i=\tfrac\alpha3=\tfrac{1}{12}$ for $4\le i\le6$.};
    
        \node[short_ellipse, fill=white] (C12) at (6.5, 0) {};
        \node[short_ellipse, fill=white] (C22) at (11.5, 0) {};
        \node[short_ellipse, fill=white] (C32) at (9, -2.5) {};
        
        \begin{pgfonlayer}{background}
          \node[long_ellipse] (Special2) at (9, 3) {};
        
          \foreach \cluster in {C12, C22, C32} {
            \node[dotted_circle, draw=white] (\cluster-1) at ($(\cluster) + (-.4, 0)$) {};
            \node[dotted_circle, draw=white] (\cluster-2) at ($(\cluster) + (.4, 0)$) {};
          }
        
          \node[dotted_circle, draw=Orange]  (Special2-1) at ($(Special2) + (-1.2, 0)$) {};
          \node[dotted_circle, draw=Violet]  (Special2-2) at ($(Special2) + (-0.4, 0)$) {};
          \node[dotted_circle, draw=Cyan]    (Special2-3) at ($(Special2) + (0.4, 0)$) {};
          \node[dotted_circle, draw=Magenta] (Special2-4) at ($(Special2) + (1.2, 0)$) {};
        
          \foreach \i in {1, 2} {
            \foreach \j in {1, 2} {
              \draw[base_edge, Red]   (C12-\i) to[bend right=25] (C22-\j);
              \draw[base_edge, Blue]  (C22-\i) -- (C32-\j);
              \draw[base_edge, Green] (C32-\i) -- (C12-\j);
            }
          }
        
          \foreach \targetCluster in {C12, C22, C32} {
            \foreach \m in {1, 2} {
              \draw[special_edge, Orange]  (Special2-1) -- (\targetCluster-\m);
              \draw[special_edge, Violet]  (Special2-2) -- (\targetCluster-\m);
              \draw[special_edge, Cyan]    (Special2-3) -- (\targetCluster-\m);
              \draw[special_edge, Magenta] (Special2-4) -- (\targetCluster-\m);
            }
          }
        \end{pgfonlayer}
    
        \node[align=center, text width=7cm] at (9, -4.5) {\small $r=3$, $q=7>\binom{r+1}{2}$, $\alpha=\tfrac27$, and\\ \small $\alpha_i = 0$ for $1\le i\le 3$, $\alpha_i=\tfrac\alpha4=\tfrac{1}{14}$ for $4\le i\le7$.};
        \end{tikzpicture}
        \caption{Examples of the construction for $r=3$.}
        \label{fig:construction examples}
    \end{figure}
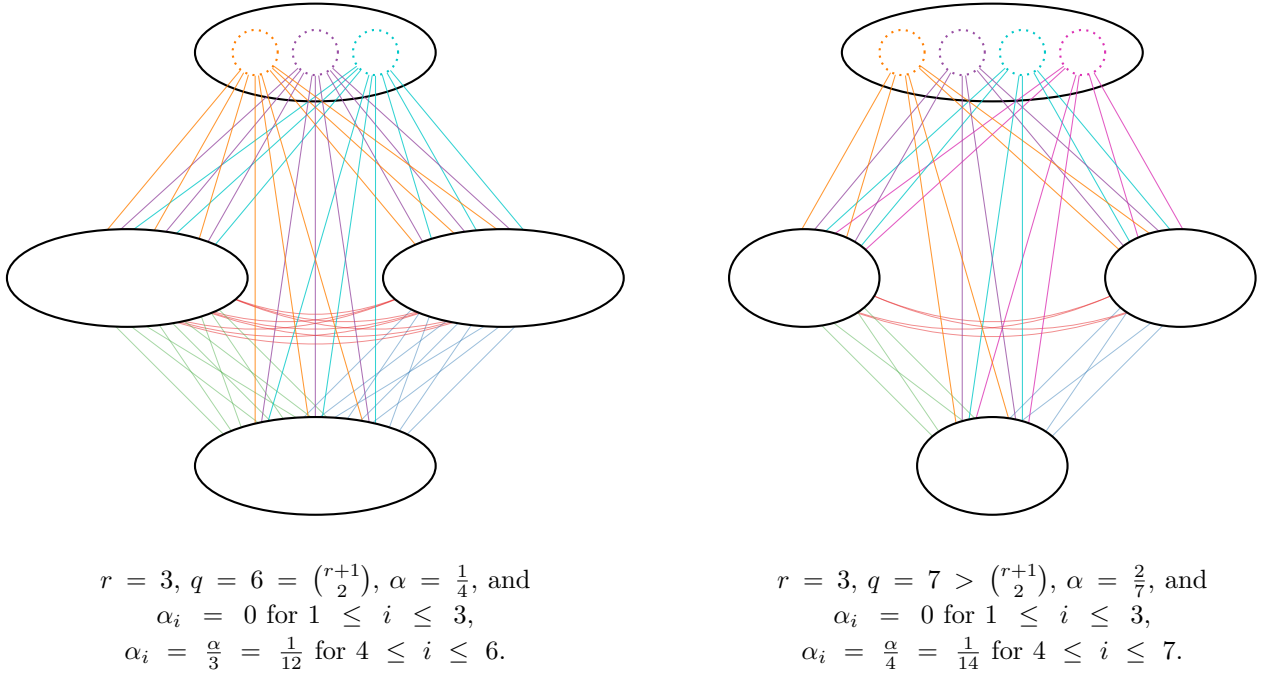
    
    We now show that in the above two constructions, there is a suitable choice of $\alpha_1,\dots,\alpha_q$ such that every $K_r$-tiling has discrepancy 0. To this end, for any $K_r$-tiling, we compute the number of edges of the $K_r$-tiling having color $i$, for each $i \in [q]$ (we will show that this number does not depend on the $K_r$-tiling).
    
    \begin{lemma}\label{lem: edge counting}
    Let $r, q, n \in \mathbb{N}$ with $r \mid n$. Let $G$ be a graph on $n$ vertices and let $\mathcal{T}$ be a $K_r$-tiling of $G$. For each $i \in [q]$, let $e_i$ denote the total number of edges of color $i$ in $\mathcal{T}$.
    \begin{enumerate}
        \item If $q \ge \binom{r}{2}$ and $G$ follows Construction \ref{construction 1}, then
        \[
            e_i =
            \begin{cases}
                \frac{n}{r}(1-2\alpha) + \alpha_i n(r-1) & \text{for } 1 \le i \le \binom{r}{2}, \\[0.5em]
                \alpha_i n(r-1) & \text{for } \binom{r}{2} < i \le q.
            \end{cases}
        \]
        \item If $q \le \binom{r}{2}$ and $G$ follows Construction \ref{construction 2}, then
        \[
            e_i =
            \begin{cases}
                (a+1)\frac{n}{r}(1-2\alpha) + \alpha_i n(r-1) & \text{for } 1 \le i \le b, \\[0.5em]
                \frac{an}{r}(1-2\alpha) + \alpha_i n(r-1) & \text{for } b < i \le q.
            \end{cases}
        \]
    \end{enumerate}
\end{lemma}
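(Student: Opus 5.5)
The plan is to classify the cliques of a $K_r$-tiling $\mathcal{T}$ by how they meet the parts $V_1,\dots,V_{r+1}$, and then count edges of each color by summing over cliques. Since $G$ is complete $(r+1)$-partite, every copy of $K_r$ contains at most one vertex from each part $V_j$. Hence each clique of $\mathcal{T}$ has one of two types. Either it is \emph{type A}: it contains exactly one vertex from each of $V_1,\dots,V_r$ and none from $V_{r+1}$. Or it is \emph{type B}: it contains exactly one vertex from $V_{r+1}$ and one vertex from each of $V_j$, $j \in [r]\setminus\{\ell\}$, for some missing index $\ell$.

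The first step is to determine how many cliques of each type there are. Let $t_A$ be the number of type A cliques, and for each $\ell\in[r]$ let $t_\ell$ be the number of type B cliques missing $V_\ell$. Since $\mathcal{T}$ covers $V_{r+1}$ and each type B clique uses exactly one vertex of $V_{r+1}$, we get $\sum_\ell t_\ell = \alpha n$. Since $\mathcal{T}$ covers $V_j$ for $j \le r$, we get $|V_j| = t_A + \sum_{\ell\ne j} t_\ell = t_A + \alpha n - t_j$. Because all $|V_j|$ are equal to $\frac{(1-\alpha)n}{r}$, all $t_j$ are equal, so $t_j = \alpha n / r$. This gives $t_A = \frac{(1-\alpha)n}{r} - \alpha n + \frac{\alpha n}{r} = \frac{n}{r} - \alpha n$. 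This rigidity step is the crux: the tiling structure is forced, so the color counts cannot depend on $\mathcal{T}$.

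The second step counts the colors.
\begin{itemize}
\item Each vertex $y \in Y_k$ lies in exactly one clique of $\mathcal{T}$, and in that clique it has $r-1$ edges, all of color $k$. No edge is incident to two vertices of $V_{r+1}$, so these edges are counted once each, contributing $\alpha_k n (r-1)$ edges of color $k$.
\item An edge between $V_i$ and $V_j$ ($i<j\le r$) has color $g(\{i,j\})$. Each type A clique contains exactly one such edge for each pair $\{i,j\}$. A type B clique missing $\ell$ contains one such edge for each pair with $\ell\notin\{i,j\}$.
\end{itemize}
So pair $\{i,j\}$ appears in
\[
t_A + \alpha n - t_i - t_j = \frac{n}{r} - \alpha n + \alpha n - \frac{2\alpha n}{r}
\]
cliques. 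Up to the normalization $\frac{n}{r}(1-2\alpha)$ in the statement, this is the count contributed by pair $\{i,j\}$. I expect the main bookkeeping obstacle to be reconciling this expression with the stated form, and I would double-check it against the construction's parameters. In Construction \ref{construction 1}, $g$ is a bijection, so color $i\le\binom r2$ gets exactly one pair's contribution and colors $i>\binom r2$ get none. In Construction \ref{construction 2}, color $i$ gets $|g^{-1}(i)|$ pairs, which is $a+1$ for $i \le b$ and $a$ for $i > b$. Adding the $Y$-contributions yields the stated formulas in both parts.
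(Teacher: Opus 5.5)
Your proposal is correct and follows essentially the same route as the paper. Your $t_A$ and $t_\ell$ are the paper's $m_{r+1}$ and $m_\ell$, the numbers of cliques of $\mathcal{T}$ avoiding a given part; the paper gets these directly as $\frac{n}{r}-|V_j|$, while you solve a small linear system, and from there the $Y_k$-edges give $\alpha_k n(r-1)$ and each pair $\{i,j\}$ is counted by the cliques avoiding some part other than $V_i,V_j$. There is nothing left to reconcile in your last step, since $\frac{n}{r}-\alpha n+\alpha n-\frac{2\alpha n}{r}$ simplifies exactly to $\frac{n}{r}(1-2\alpha)$.
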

    \begin{proof}
    We start with arguments which are relevant to both constructions. 
    \begin{itemize}
        \item For $1 \leq j \leq r + 1$, let $m_j$ be the number of $K_r$'s in $\mathcal{T}$ which avoid $V_j$. Because the number of $K_r$'s in $\mathcal{T}$ which intersect $V_j$ is $|V_j|$, $m_j = \vert \mathcal{T} \vert - \vert V_j \vert = \frac{n}{r} - |V_j|$. 
        \item For $1 \leq i \leq q$, we have $e_i = e'_i + e''_i$, where $e'_i$ is the number of edges of color $i$ in $\mathcal{T}$ which does not have an endpoint in $V_{r + 1}$ and $e''_i$ is the number of those edges that does. 
        
        We compute $e''_i$ as follows: because $V_{r + 1}$ is independent, each $K \in \mathcal{T}$ uses at most one vertex from $V_{r + 1}$. The elements in the perfect tiling contributing to $e''_i$ are those $K_r$'s which contains exactly $1$ vertex from $Y_i \subset V_{r + 1}$ and such a $K_r$ contributes $(r - 1)$ edges of color $i$. Therefore, $e''_i = \vert Y_i \vert (r - 1) = \alpha_i n (r - 1)$. 
    \end{itemize}
        
    Next, we prove Part 1. Assume $G$ follows Construction \ref{construction 1}. By the above, it suffices to show that
    \[
        e'_i = 
        \begin{cases}
            \frac{n}{r}(1-2\alpha)
            & \text{when } 1 \leq i \leq \binom{r}{2},\\[1.0em] 
            0 & \text{when } \binom{r}{2} < i \leq q.    
        \end{cases}
    \]
    
    For $i > \binom{r}{2}$, no edges outside those incident to $V_{r+1}$ have color $i$, so $e'_i = 0$. Now let $1 \leq i \leq \binom{r}{2}$. There exists a unique pair $1 \leq j < k \leq r$ such that all edges in $E[V_j, V_k]$ have color $i$. Thus, $e'_i$ counts the number of cliques in $\mathcal{T}$ that contain an edge between $V_j$ and $V_k$. Because each $K \in \mathcal{T}$ contains exactly 1 vertex from all but one $V_\ell$, the number above is the sum across $\ell \neq j, k$ of the number of $K \in \mathcal{T}$ which avoids $V_i$; that is,
    \[
        e'_i = m_{r+1} + \sum_{\ell \in [r] \setminus \{j,k\}} m_\ell = \left( \frac{n}{r}-\alpha n \right) + (r-2)\frac{\alpha n}{r} = \frac{n}{r}(1-2\alpha).
    \]

    Finally, let $G$ follow Construction \ref{construction 2} and write $\binom{r}{2} = qa + b$. Then, similar to the above, for $1 \leq j < k \leq r$, the number of edges of $\mathcal{T}$ in $E[V_j,V_k]$ is $\frac{n}{r}(1-2\alpha)$. Therefore, 
    \[
        e'_i = 
        \begin{cases}
            (a+1)\frac{n}{r}(1-2\alpha)
            & \text{when } i \in \{1, \dots, b\},\\[1.0em] 
            \frac{an}{r}(1-2\alpha) & \text{when } i \in \{b+1,\dots,q\}.
        \end{cases}
    \]
    as required.
    \end{proof}
    \begin{lemma}
        Let $r, q \in \mathbb{N}$ with $r \ge 3$ and $\binom{r}{2} \leq q \leq \binom{r + 1}{2}$. For every $N \in \mathbb{N}$, there exists an integer $n > N$ divisible by $r$, a graph $G$ on $n$ vertices with minimum degree
        \[
            \delta(G) = \frac{r}{r+1}n,
        \]
        and an edge coloring $f: E(G) \to [q]$ such that every perfect $K_r$-tiling of $G$ has discrepancy 0 with respect to $f$.
        \label{lem: thm 1.6 & 1.7 lo part 1}
    \end{lemma}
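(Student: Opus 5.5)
We use Construction \ref{construction 1} with $\alpha = \frac{1}{r+1}$, so that all $r+1$ parts have size $\frac{n}{r+1}$. Since $G$ is complete $(r+1)$-partite with equal parts, every vertex has degree $n - \frac{n}{r+1} = \frac{r}{r+1}n$, which gives $\delta(G) = \frac{r}{r+1}n$. We take $n$ divisible by $r(r+1)$ times a common denominator of the $\alpha_i$, so that all part sizes are integers and $r \mid n$; such $n$ can be chosen larger than any $N$. It then remains to choose $\alpha_1,\dots,\alpha_q \ge 0$ with $\sum_i \alpha_i = \frac{1}{r+1}$ such that the counts $e_i$ from Lemma \ref{lem: edge counting}, Part 1, are all equal. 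Since these counts do not depend on the tiling, every $K_r$-tiling then has exactly $e(\mathcal{T})/q$ edges of each color, and so has discrepancy $0$.

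With $\alpha = \frac{1}{r+1}$ we have $\frac{n}{r}(1-2\alpha) = \frac{(r-1)n}{r(r+1)}$. Following the figure, we set $\alpha_i = 0$ for $1 \le i \le \binom{r}{2}$, which gives $e_i = \frac{(r-1)n}{r(r+1)}$ for these colors. We set $\alpha_i = \beta$ for the remaining $p := q - \binom{r}{2}$ colors, which gives $e_i = \beta n(r-1)$. Equality requires $\beta = \frac{1}{r(r+1)}$, and the sum constraint requires $p\beta = \frac{1}{r+1}$, i.e. $p = r$, i.e. $q = \binom{r+1}{2}$. So the figure's choice handles only the endpoint $q=\binom{r+1}{2}$; the general case needs more care.

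For general $q$ with $0 \le p \le r$, we instead allow positive $\alpha_i$ on the first $\binom{r}{2}$ colors as well. We set $\alpha_i = \gamma$ for $i \le \binom{r}{2}$ and $\alpha_i = \beta$ for $i > \binom{r}{2}$. The equations are
\[
\frac{(r-1)n}{r(r+1)} + \gamma n (r-1) = \beta n (r-1), \qquad \binom{r}{2}\gamma + p\beta = \frac{1}{r+1}.
\]
The first gives $\beta = \gamma + \frac{1}{r(r+1)}$. Substituting into the second gives $q\gamma = \frac{1}{r+1} - \frac{p}{r(r+1)} = \frac{r-p}{r(r+1)}$. Hence $\gamma = \frac{r-p}{qr(r+1)} \ge 0$ precisely because $p \le r$, i.e. $q \le \binom{r+1}{2}$, and then $\beta > 0$. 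Both values are rational, so $n$ can be chosen to make every $\alpha_i n$ an integer. The case $p=0$ (i.e. $q=\binom{r}{2}$) is included, with only the $\gamma$ equation needed.

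The main obstacle is only this bookkeeping: checking nonnegativity, which is exactly where the hypothesis $q \le \binom{r+1}{2}$ enters, and checking that the divisibility conditions on $n$ can be met simultaneously. Both are routine once the parameters above are fixed. We also note that the degree computation uses that the parts have exactly equal sizes, which holds because $|V_{r+1}| = \alpha n = \frac{n}{r+1}$.
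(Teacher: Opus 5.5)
Your proposal is correct and is essentially the paper's proof: the paper also uses Construction~\ref{construction 1} with a balanced $(r+1)$-partition, and your parameters coincide with its choice, since with $p = q - \binom{r}{2}$ one has $\gamma = \frac{r-p}{qr(r+1)} = \frac{1}{2q} - \frac{1}{r(r+1)}$ and $\beta = \gamma + \frac{1}{r(r+1)} = \frac{1}{2q}$. The only difference is presentational: you derive these values by solving the two linear equations, whereas the paper states them and then verifies $\alpha = \frac{1}{r+1}$ and $e_1 = e_q$ directly via Lemma~\ref{lem: edge counting}.
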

    \begin{proof}
        Let $G$ follow Construction \ref{construction 1} with parameters 
        \[
        \alpha_i =
        \begin{cases}
        \frac{1}{2q}-\frac{1}{r(r+1)}, &\text{for } 1\leq i\leq \binom{r}{2},\\
        \frac{1}{2q} &\text{for } i>\binom{r}{2}.
        \end{cases}
        \]
        and $n > N$ satisfying the needed divisibility. First, we verify that these parameters are admissible. Since $q \leq \binom{r+1}{2} = \frac{r(r+1)}{2}$, we have $\frac{1}{2q} \ge \frac{1}{r(r+1)}$, which implies $\alpha_i \geq 0$ for all $i$. Summing the weights also yields:
        \[
            \alpha = \sum_{i=1}^q \alpha_i = q \cdot \frac{1}{2q} - \binom{r}{2} \frac{1}{r(r+1)} = \frac{1}{2} - \frac{r-1}{2(r+1)} = \frac{1}{r+1}.
        \]
        As $\alpha = \frac{1}{r + 1}$, by the definition of the construction we have $|V_1| = \dots = |V_r| = |V_{r + 1} | = \frac{n}{r+1}$. Thus, $G$ is a balanced complete $(r+1)$-partite graph and
        \[
            \delta(G) = n - \frac{n}{r+1} = \frac{r}{r+1}n.
        \]
        We now show that every $K_r$-tiling has discrepancy 0. Let $\mathcal{T}$ be a $K_r$-tiling, and let $e_i$ be the number of edges of colour $i$ in $\mathcal{T}$ ($1 \leq i \leq q$). Lemma \ref{lem: edge counting} implies that $e_1 = \dots = e_{\binom{r}{2}}$ and $e_{\binom{r}{2}+1}=\dots= e_{q}$, so it suffices to show that $e_1= e_q$.  
        Using the formulas in Lemma \ref{lem: edge counting} and noting $1 - \frac{2}{1 + r} = \frac{r - 1}{r + 1}$,
        \begin{align*}
            e_1 &= \frac{n}{r}(1-2\alpha) + \alpha_1 n(r-1) \\
                &= \frac{n(r-1)}{r(r+1)} + \left( \frac{1}{2q}-\frac{1}{r(r+1)} \right)n(r-1) \\
                &= n(r-1) \left[ \frac{1}{r(r+1)} + \frac{1}{2q} - \frac{1}{r(r+1)} \right] \\
                &= \frac{n(r-1)}{2q} = e_q.
        \end{align*}
        as required.
    \end{proof}
    \begin{lemma}\label{lem: thm 1.6 & 1.7 lo part 2}
        Let $r \ge 3$ and $q \ge \binom{r+1}{2}$. Then there are infinitely many integers $n$ divisible by $r$ for which there exists a graph $G$ on $n$ vertices with minimum degree
        \[
            \delta(G) = \left( \frac{1}{2} + \frac{r(r-1)}{4q} \right)n,
        \]
        and an edge coloring $f: E(G) \to [q]$ such that every perfect $K_r$-tiling of $G$ has discrepancy 0 with respect to $f$.
    \end{lemma}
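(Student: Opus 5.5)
We use Construction \ref{construction 1} with the weights put only on the ``new'' colors. Set $m := q - \binom{r}{2}$ and choose
\[
\alpha_i = 0 \ \text{ for } 1 \le i \le \tbinom{r}{2}, \qquad \alpha_i = \frac{1}{2q} \ \text{ for } \tbinom{r}{2} < i \le q,
\]
so that $\alpha = \frac{m}{2q} = \frac{1}{2} - \frac{r(r-1)}{4q}$.

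The value $\frac{1}{2q}$ comes from a balancing equation. By Lemma \ref{lem: edge counting}(1), every $K_r$-tiling $\mathcal{T}$ has $e_1 = \dots = e_{\binom{r}{2}} = \frac{n}{r}(1-2\alpha)$. It also has $e_{\binom{r}{2}+1} = \dots = e_q = \beta n (r-1)$, where $\beta$ denotes the common value of the $\alpha_i$ for $i > \binom{r}{2}$. Equating the two and using $\alpha = m\beta$ gives
\[
1 - 2m\beta = r(r-1)\beta.
\]
Hence $\beta = \frac{1}{r(r-1)+2m} = \frac{1}{2q}$. With this choice all $e_i$ are equal to $\frac{n(r-1)}{2q}$. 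Since they sum to the number of edges of the tiling, each color appears exactly $e(\mathcal{T})/q$ times, so the discrepancy is $0$. This holds for every tiling, because Lemma \ref{lem: edge counting} shows the counts do not depend on $\mathcal{T}$.

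Next we check admissibility and compute the minimum degree. Clearly $\alpha_i \ge 0$ and $\alpha \le \frac12 \le 1$. For the divisibility requirements, it suffices to take $n$ a multiple of $4qr$, which gives infinitely many $n$ divisible by $r$. Indeed, then $\alpha_i n = \frac{n}{2q} \in \mathbb{N}$, and $(1-\alpha)n = \frac{n}{2} + \frac{r(r-1)n}{4q}$ is an integer divisible by $r$.

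Since $G$ is complete $(r+1)$-partite, $\delta(G) = n - \max_j |V_j|$. The key inequality is $\alpha \ge \frac{1}{r+1}$, which is equivalent to $|V_{r+1}| = \alpha n \ge \frac{(1-\alpha)n}{r} = |V_j|$ for $j \le r$. This is exactly where the hypothesis $q \ge \binom{r+1}{2}$ is used: it gives $\frac{r(r-1)}{4q} \le \frac{r-1}{2(r+1)}$, and therefore
\[
\alpha \ge \frac12 - \frac{r-1}{2(r+1)} = \frac{1}{r+1}.
\]
Thus $V_{r+1}$ is a largest part, and
\[
\delta(G) = n - \alpha n = \left(\frac12 + \frac{r(r-1)}{4q}\right)n,
\]
as claimed.

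The only delicate point is this comparison of part sizes, i.e.\ identifying which part is largest. The rest consists of direct substitutions into Lemma \ref{lem: edge counting}.
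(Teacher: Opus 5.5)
Your proposal is correct and matches the paper's proof. It uses the same instance of Construction~\ref{construction 1} ($\alpha_i=0$ for $i\le\binom{r}{2}$, $\alpha_i=\frac{1}{2q}$ otherwise), the same use of $q\ge\binom{r+1}{2}$ to get $\alpha\ge\frac{1}{r+1}$ so that $V_{r+1}$ is the largest part, and the same check $e_1=\frac{n}{r}(1-2\alpha)=\frac{n(r-1)}{2q}=e_q$ via Lemma~\ref{lem: edge counting}. Your balancing derivation of $\frac{1}{2q}$ and the explicit choice $4qr\mid n$ are small additions that the paper leaves implicit.
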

    \begin{proof}
        Let $G$ follow Construction \ref{construction 1} with parameters
        \[
        \alpha_i =
        \begin{cases}
        0 &\text{for } 1\leq i\leq \binom{r}{2},\\
        \frac{1}{2q} &\text{for } \binom{r}{2} < i\leq q.
        \end{cases}
        \]
        and $n > N$ satisfying the needed divisibility. Once again, $\alpha_i \ge 0$ for $1 \leq i \leq q$ and
        \[
        \alpha = \sum_{i=1}^q \alpha_i = \frac{1}{2q}\left(q-\binom{r}{2}\right) = 
        \frac{1}{2} - \frac{r(r-1)}{4q} \leq 1.
        \]
        The condition $q \ge \binom{r+1}{2}$ implies
        \[
            \alpha = \frac{1}{2} - \frac{r(r-1)}{4q} \ge \frac{1}{2} - \frac{r - 1}{2(r + 1)} = \frac{1}{r+1}.
        \]
        As $\frac{n}{r + 1}$ is the average size of the $\vert V_i \vert$'s, this implies $|V_{r+1}| = \alpha n \ge \frac{(1-\alpha)n}{r} = |V_i|$ for all $i \in [r]$ and so
        \[
            \delta(G) = n - |V_{r+1}| = (1-\alpha)n = \left( \frac{1}{2} + \frac{r(r-1)}{4q} \right)n.
        \]
        Now, we show that every $K_r$-tiling $\mathcal{T}$ has discrepancy 0. Let $e_i$ be the number of edges of color $i$ in $\mathcal{T}$ ($1 \leq i \leq q$). By Lemma \ref{lem: edge counting} we have 
        $e_1 = \dots = e_{\binom{r}{2}}$, 
        $e_{\binom{r}{2}+1}=\dots= e_{q}$, and
        \[e_1 = \frac{n}{r} (1 - 2\alpha) = \frac{n}{r}\left(1-\left(1 - \frac{r(r-1)}{2q}\right)\right)=\frac{n(r-1)}{2q}= e_q\]
        as required.
    \end{proof}
    \begin{lemma}
        \label{lem:thm 1.8 lo}
        Let $r \ge 3$ and $2 \le q \le \binom{r}{2}$. Let $a, b \in \mathbb{N}$ such that $\binom{r}{2} = aq + b$ with $0 \le b < q$. 
        If $b=0$ or $r+b \ge q$, then there are infinitely many integers $n$ divisible by $r$ for which there exists an $n$-vertex graph $G$ with minimum degree
        \[
            \delta(G) = \frac{r}{r+1}n,
        \]
        and an edge coloring $f: E(G) \to [q]$ such that every perfect $K_r$-tiling of $G$ has discrepancy 0 with respect to $f$.
    \end{lemma}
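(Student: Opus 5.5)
We take $G$ to follow Construction \ref{construction 2} with $\alpha = \frac{1}{r+1}$, so that all $r+1$ parts have size $\frac{n}{r+1}$. Then $G$ is a balanced complete $(r+1)$-partite graph and $\delta(G) = \frac{r}{r+1}n$, exactly as in the proof of Lemma \ref{lem: thm 1.6 & 1.7 lo part 1}. It remains to choose $\alpha_1,\dots,\alpha_q \ge 0$ summing to $\frac{1}{r+1}$ so that the counts $e_i$ from part 2 of Lemma \ref{lem: edge counting} are all equal. Equal counts give discrepancy $0$ for every $K_r$-tiling.

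Write $c_i = a+1$ for $i \le b$ and $c_i = a$ for $i > b$. With $\alpha = \frac{1}{r+1}$ we have $\frac{1-2\alpha}{r} = \frac{r-1}{r(r+1)}$, so Lemma \ref{lem: edge counting} gives
\[
e_i = n(r-1)\left(\frac{c_i}{r(r+1)} + \alpha_i\right).
\]
We therefore want $\frac{c_i}{r(r+1)} + \alpha_i = t$ for a constant $t$. Summing over $i$ and using $\sum_i c_i = aq+b = \binom{r}{2}$ together with $\sum_i \alpha_i = \frac{1}{r+1}$ forces
\[
qt = \frac{r-1}{2(r+1)} + \frac{1}{r+1} = \frac{1}{2},
\]
so $t = \frac{1}{2q}$. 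Accordingly we set
\[
\alpha_i = \frac{1}{2q} - \frac{c_i}{r(r+1)}.
\]
These sum to $\frac{1}{r+1}$ by the computation above, and they make all $e_i$ equal to $\frac{n(r-1)}{2q}$.

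The only real point, and the place where the hypothesis enters, is nonnegativity of the $\alpha_i$. We need $2q c_i \le r(r+1)$ for all $i$.
\begin{itemize}
\item If $b = 0$, then $c_i = a$ and $2qa = r(r-1) \le r(r+1)$, so the condition holds.
\item If $b > 0$, the binding case is $c_i = a+1$. Since $r(r+1) = 2(aq+b) + 2r$, the condition $2q(a+1) \le r(r+1)$ is equivalent to $q \le b + r$, which is exactly the hypothesis $r + b \ge q$.
\end{itemize}
Finally, since all $\alpha_i$ are rational, we take $n$ to be any sufficiently large multiple of a common denominator (for instance of $2q\,r^2(r+1)$). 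Then $\alpha_i n \in \mathbb{N}$, $\frac{(1-\alpha)n}{r} = \frac{n}{r+1} \in \mathbb{N}$, and $r \mid n$. This gives infinitely many admissible $n$, completing the argument.
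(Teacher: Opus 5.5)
Your proposal is correct and is essentially the paper's proof: you use Construction \ref{construction 2} with total weight $\alpha=\frac{1}{r+1}$, and your weights $\alpha_i=\frac{1}{2q}-\frac{c_i}{r(r+1)}$ are exactly the paper's $\frac{r+b-q}{qr(r+1)}$ (for $i\le b$) and $\frac{r+b}{qr(r+1)}$ (for $i>b$), via $r(r+1)=2(aq+b)+2r$. The only difference is presentational. You derive the weights by solving for a common value $t=\frac{1}{2q}$ and then check nonnegativity. This handles the cases $b=0$ and $b>0$ uniformly and shows that the hypothesis $r+b\ge q$ is exactly the condition needed for nonnegativity.
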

    \begin{proof}
        Let $G$ follow Construction \ref{construction 2} with parameters (when $b = 0$, the first case is void)
        \[
        \alpha_i =
        \begin{cases}
         \frac{r+b-q}{qr(r+1)}, &\text{for } 1\leq i\leq b\\
        \frac{r+b}{qr(r+1)} &\text{for } b<i\leq q
        \end{cases}
        \]
        and $n > N$ satisfying the divisibility conditions. Clearly, each $\alpha_i \ge 0$ and
        \[
        \alpha = 
        \sum_{i=1}^q \alpha_i = 
        b\frac{r+b-q}{qr(r+1)}+(q-b)\frac{r+b}{qr(r+1)}=\frac{rb+b^2-qb+qr-br+qb-b^2}{qr(r+1)} =\frac{1}{r+1}.
        \]
        So $G$ is a balanced complete $(r + 1)$-partite graph and $\delta(G) = \frac{r}{r + 1}n$.

        We now demonstrate that every perfect $K_r$-tiling $\mathcal{T}$ has discrepancy 0. This is immediate when $b = 0$ by Lemma \ref{lem: edge counting} so assume otherwise. Let $e_i$ denote the number of edges of color $i$ ($1 \leq i \leq q$). By Lemma \ref{lem: edge counting}, we have $e_1 = \dots = e_b$ and $e_{b+1} = \dots = e_q$ so it suffices to prove that $e_1 = e_q$. Indeed:
        \begin{align*}
            e_1 &= (a+1)\frac{n}{r}\left(1-\frac{2}{r+1}\right)+n(r-1)\frac{r+b-q}{qr(r+1)}\\
            &=(a+1)\frac{n}{r} \cdot \frac{r-1}{r+1} + 
            n(r-1)\frac{r+b-q}{qr(r+1)}\\
            &=a\frac{n}{r} \cdot \frac{r-1}{r+1} + 
            n(r-1)\frac{r+b}{qr(r+1)}
            = e_q.
        \end{align*}
    \end{proof}

\section{The Key Lemmas}\label{sec:lemmas}

    In this section we state and prove the key lemmas used in the proofs of our results (except for Lemma \ref{lem:main}, whose proof is deferred to Section \ref{sec: proof of main lemma}). We start with some notation.
    A {\em $q$-edge-colored graph} is a pair $(G,f)$ where $G$ is a graph and $f : E(G) \rightarrow [q]$ is a $q$-coloring of $E(G)$. 
    For a subgraph $F \subseteq G$, we denote by $\#c(F)$ the number of edges of $F$ having color $c$. The {\em color profile} of $F$ is the vector $(\#c(F) : c \in [q])$.


    \subsection{Templates}
    
    Key objects in our proofs are graphs that have two different $K_r$-tilings, which have different color profiles. Such gadgets have been used in multiple previous works on discrepancy. Following \cite{BCPT:21}, we call such graphs {\em templates}. We now define the specific templates that we will use. 
    \begin{definition}[$K_r$-template]\label{def: template}
        An even cycle $C = w_1 \dots w_{2k} w_1$ is \textbf{balanced} if 
        \[\{f(w_1w_2), f(w_3w_4), \dots, f(w_{2k-1}w_{2k})\} = \{f(w_2w_3), f(w_4w_5), \dots, f(w_{2k}w_1)\}\]
        as multisets and \textbf{unbalanced} otherwise.
        
        For $r \ge 3$, a $K_r$\textbf{-template} is a subgraph consisting of a copy $K$ of $K_{r-2}$ (the \emph{center}) together with an unbalanced cycle of length 4 or 6 in $N(K)$. \footnote{We could also use longer cycles, i.e., the resulting graphs would still have the key property given by Lemma \ref{lem:template}. But it turns out that forbidding templates with cycles of lengths 4 and 6 suffices to infer enough structural information on the host graph. Hence, we do not use longer cycles.}
    \end{definition}

    The importance of templates is in the fact that by blowing up a template in an appropriate way, one can find two $K_r$-tilings of the blowup which have different color profiles. Later on, we will use this to find a $K_r$-tiling with large discrepancy in the case where there exist many vertex-disjoint copies of this blowup. The definition of the blowup is as follows. 
    \begin{definition}[$F^+$]\label{def:template blowup}
    Let $F$ be a $K_r$-template with cycle length $2k \in \{4,6\}$. Let $F^+$ be the edge-colored graph obtained from $F$ by blowing up each vertex in the center of $F$ to size $k$. This means that each vertex in the center of $F$ is replaced with an independent set of size $k$ (and the cycle vertices, i.e., the vertices outside the center, remain unchanged). Also, every edge is replaced with a complete bipartite graph of the same color.
    \end{definition}

    The following lemma states that for a $K_r$-template $F$, its blowup $F^+$ indeed has two $K_r$-tilings having different color profiles.

    \begin{lemma}\label{lem:template}
    Let $F$ be a $K_r$-template, and let $F^+$ be as in Definition \ref{def:template blowup}. Then there are two $K_r$-tilings $\mathcal{T}^1,\mathcal{T}^2$ of $F^+$ and there is a color $c \in [q]$, such that $c$ appears a different number of times in $\mathcal{T}^1$ and $\mathcal{T}^2$.
    \end{lemma}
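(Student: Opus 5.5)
Let $F$ consist of the center $K = \{x_1,\dots,x_{r-2}\}$ and the unbalanced cycle $C = w_1 \dots w_{2k} w_1$ in $N(K)$, with $k \in \{2,3\}$. In $F^+$ each $x_j$ is replaced by an independent set $X_j = \{x_j^1,\dots,x_j^k\}$. The cycle has two perfect matchings,
$$M_1 = \{w_1w_2, w_3w_4, \dots, w_{2k-1}w_{2k}\}, \qquad M_2 = \{w_2w_3, \dots, w_{2k}w_1\},$$
each with $k$ edges. We build one $K_r$-tiling from each matching.

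Fix $M \in \{M_1,M_2\}$ and enumerate its edges as $e^1,\dots,e^k$. For each $t \in [k]$, let $Q^t$ consist of $e^t$ together with the vertices $x_1^t,\dots,x_{r-2}^t$.

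\emph{$Q^t$ is a clique.} The endpoints of $e^t$ lie in $N(K)$, so each is adjacent to every $x_j$. The $x_j$ are pairwise adjacent. Blowing up keeps all these adjacencies, since distinct blown-up copies $x_j^t, x_{j'}^t$ with $j\ne j'$ remain adjacent, and every cycle vertex is adjacent to all copies. Hence $Q^t$ is a copy of $K_r$.

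\emph{The $Q^t$ tile $F^+$.} Each $X_j$ is covered exactly once, using index $t$. Each cycle vertex is covered exactly once, since $M$ is a perfect matching of $C$. Call the resulting tilings $\mathcal{T}^1$ (from $M_1$) and $\mathcal{T}^2$ (from $M_2$).

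\emph{Comparing colour counts.} Every edge of $Q^t$ is one of three types:
\begin{itemize}
\item an edge $x_j^t x_{j'}^t$, which has colour $f(x_jx_{j'})$;
\item an edge $x_j^t w$ with $w$ an endpoint of $e^t$, which has colour $f(x_jw)$;
\item the matching edge $e^t$ itself.
\end{itemize}
Summing over $t$, the first type contributes $k$ copies of each center edge. The second type contributes, for each $j$ and each cycle vertex $w$, exactly one edge of colour $f(x_jw)$, because every $w$ lies in exactly one $e^t$. Both contributions are the same for $\mathcal{T}^1$ and $\mathcal{T}^2$.

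The tilings therefore differ only through the multisets $\{f(e) : e \in M_1\}$ and $\{f(e) : e \in M_2\}$. Since $C$ is unbalanced, these multisets differ, so some colour $c$ has different multiplicities in them. By the decomposition above, $c$ then appears a different number of times in $\mathcal{T}^1$ and $\mathcal{T}^2$.

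The only real point of care is the bookkeeping of the second type. The blow-up size $k$ is chosen exactly so that each cycle vertex is paired with its own copy of the center. This makes the center-to-cycle contribution independent of the choice of matching.
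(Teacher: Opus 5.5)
Your proof is correct and follows essentially the same route as the paper: one tiling for each perfect matching of the unbalanced cycle, with the $t$-th copy of the center attached to the $t$-th matching edge. The tiling edges then split into three types, and only the matching edges can differ between the two tilings. Your handling of the center-to-cycle edges is, if anything, slightly more direct than the paper's: each cycle vertex $w$ is joined to exactly one center copy, and the colour of $x_j^t w$ is $f(x_jw)$ regardless of $t$.
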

    \begin{proof}
        Let $2k \in \{4,6\}$ be the length of the cycle in $F$. Let $v_1, \dots, v_{r-2}$ be the vertices of the center of $F$, and let $u_1, \dots, u_{2k}$ be the vertices of the unbalanced cycle, in order.
        
        In $F^+$, each center vertex $v_i$ is replaced by an independent set $V_i = \{v_i^{(1)}, \dots, v_i^{(k)}\}$ of size $k$. For each $j \in [k]$, let $Z_j = \{v_1^{(j)}, \dots, v_{r-2}^{(j)}\}$ be the $j$-th copy of the center. We define $\mathcal{T}^1$ and $\mathcal{T}^2$ as follows (cycle indices taken modulo $2k$):
        \[
            \mathcal{T}^1 = \left\{ Z_j \cup \{u_{2j-1}, u_{2j}\} \;\middle|\; 1 \le j \le k \right\}, \; \mathcal{T}^2 = \left\{ Z_j \cup \{u_{2j}, u_{2j+1}\} \;\middle|\; 1 \le j \le k \right\}.
        \]
        These are two $K_r$-tilings of $F^+$.
        The edges in these tilings can be partitioned into three types:
        \begin{enumerate}
            \item \emph{Edges within the center copies:}
            We first consider the edges contained in 
            $Z := Z_1 \cup \dots \cup Z_k$.
            Both $\mathcal{T}^1$ and $\mathcal{T}^2$ contain the edges of the clique $Z_j$ for each $j \in [k]$, 
            and each edge of $\mathcal{T}^1$ or $\mathcal{T}^2$ which is contained in $Z$ belongs to $Z_j$ for some $j \in [k]$.
            Thus, $\mathcal{T}^1$ and $\mathcal{T}^2$ contain the same edges within $Z$.
            \item \emph{Edges between center copies and the cycle:} For each $j \in [k]$, the edges connecting $Z_j$ to $u_{2j}$ appear in both $\mathcal{T}^1$ and $\mathcal{T}^2$, so their colors are identical. The remaining edges connect $Z_j$ to $u_{2j-1}$ in $\mathcal{T}^1$ and to $u_{2j+1}$ in $\mathcal{T}^2$. As $j$ ranges from $1$ to $k$, both $\{2j-1\}$ and $\{2j+1\}$ traverse the same set of indices modulo $2k$ (namely, all odd indices). 
            Also, for all $1 \leq i,i' \leq k$ and $1 \leq \ell \leq 2k$, the sets $E(Z_i,u_{\ell})$ and $E(Z_{i'},u_{\ell})$ have the same color profile.
            Thus, the edges between the center copies and the cycle have the same color profiles in $\mathcal{T}^1$ and $\mathcal{T}^2$.
            \item \emph{Edges within the cycle:} $\mathcal{T}^1$ contains the edges $\{u_1u_2, u_3u_4, \dots, u_{2k-1}u_{2k}\}$, while $\mathcal{T}^2$ contains the edges $\{u_2u_3, u_4u_5, \dots, u_{2k}u_1\}$. Since $F$ is a $K_r$-template, the cycle $u_1,\dots,u_{2k}$ is unbalanced. Therefore, the multisets of colors for these two edge-sets differ.
        \end{enumerate}
        Since the edge color multisets agree for types 1 and 2, but differ for type 3, the edge color multisets for $\mathcal{T}^1$ and $\mathcal{T}^2$ are distinct.
    \end{proof}

    \subsection{The Cleaning Lemma}
	In the proof of upper bounds for Theorems \ref{thm:general upper bound}, \ref{thm:r=4}, and \ref{thm:r=3}, we distinguish between two cases, depending on whether $G$ has many or few templates. If $G$ has many templates, then we will use this directly to obtain a $K_r$-tiling with high discrepancy. Otherwise, we will show that one can delete a small number of vertices and edges to obtain a graph with no templates and still having large minimum degree, matching the assumptions of the Main Lemma (Lemma \ref{lem:main}). These steps are given by the following lemma, which we call the {\em cleaning lemma}.
    \begin{lemma}[Cleaning lemma]\label{lem:template cleaning}
        Let $r \geq 3$. 
        For every $\xi > 0$ there is $\zeta > 0$ such that the following holds. Let $(G, f)$ be an edge-colored graph with $n \gg 1/\xi$ vertices, where $n$ divisible by $r$, and with $\delta(G) \geq (\frac{r-1}{r} + \xi)n$. Then one of the following holds:
        \begin{enumerate}
            \item There is a $K_r$-tiling $\mathcal{T}$ of $G$ and a color $c \in [q]$ such that $c$ appears on at least a $(\frac{1}{q}+\zeta)$-fraction of the edges of $\mathcal{T}$.
            \item There is a subgraph $G'$ of $G$ such that 
            $|V(G')| \geq (1-\xi)n$, 
            $\delta(G') \geq \delta(G) - \xi n$, 
            $G'$ has no $K_r$-templates, and the set $V(G) \setminus V(G')$ has a $K_r$-tiling (in $G$).
        \end{enumerate}
    \end{lemma}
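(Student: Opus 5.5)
We follow the standard dichotomy. Take a maximal family $\mathcal{F}$ of vertex-disjoint copies of blown-up templates $F^+$ in $G$. Each $F^+$ has at most $k(r-2)+2k \le 3r$ vertices, where $2k\in\{4,6\}$. By Lemma~\ref{lem:template}, each $F^+$ admits two $K_r$-tilings whose color profiles differ in some color. We split into cases according to whether $|\mathcal{F}| \ge \beta n$ for a small constant $\beta = \beta(\xi)$.

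\emph{Case 1: $|\mathcal{F}| \ge \beta n$.} By pigeonhole over the boundedly many template types and differing colors, we keep $\beta' n$ copies in which the same color $c$ appears a different number of times in $\mathcal{T}^1$ and $\mathcal{T}^2$. Let $U$ be the union of their vertices, so $|U| \le 3r\beta n$. If $\beta$ is small compared to $\xi$, then $G - U$ still has minimum degree at least $(\frac{r-1}{r}+\xi/2)|G-U|$. Since $r$ divides $|U|$, it also divides $|G-U|$, so Theorem~\ref{thm:hajnal sze} gives a $K_r$-tiling $\mathcal{T}_0$ of $G-U$. Now combine $\mathcal{T}_0$ with tilings of the kept copies, choosing for each copy either $\mathcal{T}^1$ or $\mathcal{T}^2$. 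Taking all $\mathcal{T}^1$ versus all $\mathcal{T}^2$ changes $\#c$ by at least $\beta' n$, and the total edge count $\frac{n}{r}\binom r2$ is fixed. Hence at least one of the two tilings has $\#c$ at least $\beta' n/2$ away from $\frac{1}{q}\cdot\frac{n}{r}\binom r2$. If it is above, we obtain outcome 1 directly. If it is below, then some other color exceeds its average by $\Omega(n)/q$, since the deviations sum to zero. Either way outcome 1 holds with $\zeta \approx \beta'/(q r^2)$.

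\emph{Case 2: $|\mathcal{F}| < \beta n$.} Let $U$ be the vertex set of $\mathcal{F}$, so $|U|\le 3r\beta n$. We first want to remove $U$ together with a $K_r$-tiling of it; for this, note that each $F^+$ itself has a $K_r$-tiling, so $U$ is tiled. By maximality, $G-U$ contains no copy of any $F^+$. This does not immediately mean that $G-U$ has no templates, so we use supersaturation/removal. Because $F^+$ is a blow-up of the template $F$ (of bounded size), the Erdős–Simonovits supersaturation/blow-up argument (or the graph removal lemma applied to colored templates) shows the following: if $G-U$ had $\varepsilon n^{|F|}$ copies of $F$, it would contain $F^+$. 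Hence $G-U$ has $o(n^{|F|})$ templates of each of the boundedly many colored types. By the (colored) removal lemma, we can delete $\varepsilon' n^2$ edges to destroy all templates.

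The main obstacle is that these deletions must preserve minimum degree. We delete the removal edges, and then delete the set $B$ of vertices that lost more than $\sqrt{\varepsilon'} n$ incident edges; this gives $|B|\le 2\sqrt{\varepsilon'}n$. The vertices in $B$ also need a $K_r$-tiling in $G$. For this, we greedily cover each vertex of $B$ by a $K_r$ that uses it together with $r-1$ other vertices outside $U\cup B$ and outside the previously chosen cliques. Such a clique exists because every vertex has degree at least $(\frac{r-1}{r}+\xi/2)n$ among the remaining vertices, so common neighborhoods of $r-1$ vertices are nonempty. Let $W$ be $U$ together with these cliques; then $|W| \le \xi n$, and $W$ is $K_r$-tiled. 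Set $G' = G - W$ with the removal edges deleted. Every remaining vertex lost at most $|W| + \sqrt{\varepsilon'}n \le \xi n$ of its degree. Deleting vertices and edges creates no new templates, so $G'$ has no $K_r$-templates, and outcome 2 holds. Finally, choose $\varepsilon' \ll \beta \ll \xi$, and take $\zeta$ from Case 1.
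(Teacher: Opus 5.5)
Your proof is correct, and it differs from the paper's in two places.

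\textbf{The dichotomy.} The paper splits according to whether some template $F$ occurs at least $\delta n^{v(F)}$ times. In the dense case it uses Lemma~\ref{lem:supersaturation} plus a counting argument to extract $\Omega(n)$ vertex-disjoint copies of $F^+$. You split instead on the size of a maximal packing of blow-ups $F^+$. So your dense case needs no counting, and supersaturation is used contrapositively in the sparse case: no $F^+$ in $G-U$ means few templates there, so the removal lemma applies. The price is that in the sparse case you must also discard the leftover packing $U$. This is harmless, since each $F^+$ has $kr$ vertices and comes with its own $K_r$-tiling.

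\textbf{Tiling the discarded vertices.} This is the more substantive difference. The paper invokes Treglown's absorbing lemma: an absorbing set $M$ with $|M|\le \nu n$, together with padding the bad set $W$ to a multiple of $r$. You instead cover each high-loss vertex by its own $K_r$ (in $G$, which Item~2 allows) through unused low-loss vertices outside $U$. This works because only $o(n)$ vertices are ever forbidden, and $\delta(G)\ge(\frac{r-1}{r}+\xi)n$ guarantees that any $r-1$ vertices still have a common neighbour among the remaining ones.

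Your greedy step is more elementary and makes the cleaning lemma independent of Lemma~\ref{lemma:absorb}. The absorbing lemma is a stronger tool than this lemma actually needs: it tiles an arbitrary small set together with a fixed $M$, whereas here you are free to choose the partners of each bad vertex.
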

    \noindent The proof of Lemma \ref{lem:template cleaning} requires several tools. First, we need the following Absorbing Lemma from \nolinebreak\cite{Treglown16}. 
 
    \begin{lemma}[Absorbing Lemma~\cite{Treglown16}]\label{lemma:absorb}
    Let $0 < 1/n \ll \nu \ll \xi \ll 1/r$ where $n, r \in \mathbb{N}$ and $r \geq 2$. Let $G$ be a graph on $n$ vertices with $\delta(G) \geq (\frac{r-1}{r} + \xi)n$. Then there exists $M \subseteq V(G)$ with 
    $|M| \leq \nu n$ such that for every $W \subseteq V(G) \setminus M$ with $|W| \leq \nu^3 n$ and with $|W|$ divisible by $r$, the graph $G[M \cup W]$ has a $K_r$-tiling.
    \end{lemma}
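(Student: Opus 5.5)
The plan is to prove the Absorbing Lemma (Lemma~\ref{lemma:absorb}) with the standard absorbing method: first build small absorbers for each $r$-set, then select a random disjoint family of them, and finally absorb $W$ greedily.

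\textbf{Step 1: an absorber for every $r$-set.} Call an $r^2$-set $A \subseteq V(G)$ an \emph{absorber} for an $r$-set $S=\{s_1,\dots,s_r\}$ if $A \cap S = \emptyset$ and both $G[A]$ and $G[A\cup S]$ have $K_r$-tilings. We build one as follows.
\begin{itemize}
\item[(a)] Pick a copy $T=\{t_1,\dots,t_r\}$ of $K_r$ disjoint from $S$.
\item[(b)] For each $i\in[r]$, pick an $(r-1)$-clique $U_i \subseteq N(\{s_i,t_i\})$, with $U_1,\dots,U_r$ pairwise disjoint and disjoint from $S\cup T$.
\item[(c)] Set $A=T\cup U_1\cup\dots\cup U_r$.
\end{itemize}
The two tilings are as follows. $G[A]$ is tiled by $\{U_i\cup\{t_i\}\}_{i\in[r]}$. $G[A\cup S]$ is tiled by $\{U_i\cup\{s_i\}\}_{i\in[r]}$ together with $T$. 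Note that $S$ itself need not span any edges.

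\textbf{Step 2: counting absorbers.} Since $\delta(G)\ge(\frac{r-1}{r}+\xi)n$, any $j\le r$ vertices have at least $(\frac{r-j}{r}+j\xi)n$ common neighbours. Hence we can build cliques greedily, vertex by vertex:
\begin{itemize}
\item there are $\Omega(n^r)$ choices of $T$;
\item given $s_i,t_i$ and $\ell<r-1$ vertices of $U_i$ already chosen, these $\ell+2\le r$ vertices have at least $r\xi n - O(r^2)$ common neighbours, leaving $\Omega_\xi(n)$ choices for the next vertex.
\end{itemize}
So every $r$-set $S$ has at least $\gamma n^{r^2}$ absorbers, where $\gamma=\gamma(\xi,r)>0$.

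\textbf{Step 3: random selection.} Choose a family $\mathcal{F}$ of $r^2$-sets by including each one independently with probability $p=\nu' n^{1-r^2}$, where $\nu'$ is suitably small compared with $\nu$ and $\gamma$. By Chernoff and Markov (and a union bound over the $\binom{n}{r}$ sets $S$), with positive probability:
\begin{itemize}
\item $|\mathcal{F}|\le 2\nu' n$;
\item every $r$-set $S$ has at least $\frac{\gamma\nu'}{2}n$ absorbers in $\mathcal{F}$;
\item the number of intersecting pairs in $\mathcal{F}$ is at most $O(\nu'^2 n)$.
\end{itemize}
Delete one member of each intersecting pair, and delete every member of $\mathcal{F}$ that is not an absorber for any $r$-set. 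This leaves a family $\mathcal{F}'$ of disjoint absorbers. With $\nu'\ll\gamma$, each $S$ still has at least $\frac{\gamma\nu'}{4}n$ absorbers in $\mathcal{F}'$. Set $M=\bigcup\mathcal{F}'$. Then $|M|\le 2r^2\nu' n\le\nu n$. Each member of $\mathcal{F}'$ is an absorber, so $G[M]$ itself has a $K_r$-tiling.

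\textbf{Step 4: absorbing $W$.} Given $W\subseteq V(G)\setminus M$ with $|W|\le\nu^3 n$ and $r \mid |W|$, split $W$ arbitrarily into $r$-sets $S_1,\dots,S_m$, where $m\le\nu^3 n/r$. Assign to each $S_j$, greedily, a distinct absorber in $\mathcal{F}'$. This is possible because each $S_j$ has at least $\frac{\gamma\nu'}{4}n > \nu^3 n/r \ge m$ absorbers, which holds as long as $\nu^3 \ll \gamma\nu'$. Tile each $A_j\cup S_j$ using its second tiling, and tile every unused absorber using its first tiling. This gives a $K_r$-tiling of $G[M\cup W]$.

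The main obstacle is the constant hierarchy in Steps 3 and 4. We need $|M|\le\nu n$, which forces $\nu'\lesssim\nu/r^2$. At the same time the number of absorbers per $S$ (about $\gamma\nu' n$) must exceed $\nu^3 n/r$. This works because $\gamma$ depends only on $\xi$ and $r$, and $\nu\ll\xi$. I would take $\nu'=\nu/(2r^2)$ and check $\gamma\nu/(8r^2)>\nu^3/r$, which holds for $\nu\ll\xi$.
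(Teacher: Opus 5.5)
The paper gives no proof of this lemma. It quotes it from Treglown, where, as far as I recall, it is a special case of an absorbing lemma stated under a more general degree-sequence hypothesis. So there is no in-paper argument to compare against.

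Your proposal is a correct, self-contained proof of exactly the minimum-degree version the paper uses, by the standard absorbing method. The point that keeps it short is that $\delta(G)\ge(\frac{r-1}{r}+\xi)n$ gives every $r$-set $S$, including sets spanning no edges, a one-step absorber $T\cup U_1\cup\dots\cup U_r$ on $r^2$ vertices. The last vertex of each $U_i$ is a common neighbour of $r$ vertices, of which there are at least $r\xi n$. Hence each $S$ has at least $\gamma(\xi,r)n^{r^2}$ absorbers.

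The probabilistic bookkeeping also checks out.
\begin{itemize}
\item The expected number of intersecting pairs in $\mathcal{F}$ is at most $r^4\nu'^2 n$. So after deletion you need $\nu'$ small compared with $\gamma/r^4$, not merely with $\gamma$.
\item With $\nu'=\nu/(2r^2)$, the greedy absorption step needs $\nu^2<\gamma/(8r)$.
\end{itemize}
Both conditions are guaranteed by the hierarchy $\nu\ll\xi\ll 1/r$.

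Your route buys transparency and independence from the reference. Citing the lemma buys the paper brevity.
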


    We will also need a multicolor version of the Graph Removal Lemma \cite{RS:78} (see also the survey \cite{ConlonFox_survey}). 
    Here we consider $q$-edge-colored graphs.
    It is easy to see that the standard proof of the removal lemma generalizes to this multicolor setting, giving the following:
    \begin{lemma}[Multicolor removal lemma]\label{lem:removal lemma}
    Let $\mathcal{F}$ be a finite family of $q$-edge-colored graphs. For every $\varepsilon > 0$ there exists $\delta > 0$ such that the following holds. Let $G$ be an $n$-vertex graph with $n \gg 1/\varepsilon$, such that for every $F \in \mathcal{F}$,
    $G$ contains at most $\delta n^{v(F)}$ copies of $F$.
    Then one can delete at most $\varepsilon n^2$ edges of $G$ to obtain a graph with no copy of any $F \in \mathcal{F}$.
    \end{lemma}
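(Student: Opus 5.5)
The plan is to run the standard regularity proof of the removal lemma, applying the regularity lemma to all $q$ color classes simultaneously. Let $G_c$ denote the spanning subgraph of $G$ formed by the edges of color $c$, for $c \in [q]$. Set $h := \max_{F \in \mathcal{F}} v(F)$. Choose $d := \varepsilon/(4q)$ and $\varepsilon' \ll d^{\binom{h}{2}}/h$ small enough for the counting lemma below.

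First, apply the multicolor (simultaneous) version of Szemerédi's regularity lemma to $G_1,\dots,G_q$. This gives an equipartition $V(G) = U_1 \cup \dots \cup U_t$ with $1/\varepsilon' \le t \le T(\varepsilon',q)$. All but at most $\varepsilon' t^2$ pairs $(U_i,U_j)$ are $\varepsilon'$-regular in every $G_c$.

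Next, delete the following edges:
\begin{itemize}
\item all edges inside the parts $U_i$, at most $n^2/t \le \varepsilon' n^2$ edges;
\item all edges between irregular pairs, at most $\varepsilon' n^2$ edges;
\item for each $c$ and each pair $(U_i,U_j)$ with $d_{G_c}(U_i,U_j) < d$, all color-$c$ edges in that pair, at most $q d n^2/2$ edges in total.
\end{itemize}
Altogether at most $\varepsilon n^2$ edges are removed.

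Now suppose some $F \in \mathcal{F}$ survives as a colored copy in the cleaned graph. Map each vertex $x$ of $F$ to the part $U_{\phi(x)}$ containing its image. Consider an edge $xy$ of $F$ of color $c$. Because edges inside parts were deleted, $\phi(x) \ne \phi(y)$. The pair $(U_{\phi(x)},U_{\phi(y)})$ is $\varepsilon'$-regular in $G_c$, and $d_{G_c} \ge d$ there. Nonadjacent vertices of $F$ may share a part. To handle this, split each part into $h$ equal pieces and give distinct pieces to vertices with the same $\phi$-value. Regularity passes to these pieces with parameter $h\varepsilon'$ and density at least $d - \varepsilon'$.

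Then apply the counting lemma in its "multi-graph" form. Each edge $xy$ of $F$ is embedded using the bipartite graph $G_{f(xy)}$ on the corresponding pair of pieces. The usual vertex-by-vertex greedy embedding proof works verbatim, since it only uses regularity and density of each pair separately. It yields at least $\tfrac12 (d/2)^{e(F)} (n/(ht))^{v(F)}$ colored copies of $F$ in $G$.

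Finally, set $\delta := \tfrac14 (d/2)^{\binom{h}{2}} (hT(\varepsilon',q))^{-h}$, the minimum over $F$. With this choice the count from the counting lemma exceeds $\delta n^{v(F)}$, contradicting the hypothesis. Hence the cleaned graph contains no copy of any $F \in \mathcal{F}$.

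The only step needing care is the counting lemma with different color graphs on different pairs and with repeated parts. I would handle it by the splitting trick above and the standard inductive embedding, so I expect no genuine obstacle beyond bookkeeping of constants.
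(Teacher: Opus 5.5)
Your proposal is correct and is the argument the paper has in mind. The paper gives no proof beyond remarking that the standard regularity-based proof of the removal lemma carries over to the multicolor setting, and your outline (simultaneous regularity for $G_1,\dots,G_q$, cleaning, then a colored counting lemma with the slicing trick for repeated parts) is that standard proof. One bookkeeping point: if copies are counted as unlabeled subgraphs rather than embeddings, divide your embedding count by $h!$ when choosing $\delta$.
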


    Finally, we need the following well-known fact, which follows from the hypergraph version of the K\H{o}v\'ari-S\'os-Tur\'an theorem, due to Erd\H{o}s~\cite{erdos:64b}. 
    \begin{lemma}\label{lem:supersaturation}
        Let $F$ be an edge-colored graph, and let $F'$ be obtained by blowing up some of the vertices of $F$.\footnote{This means that we replace each vertex of $F$ with an independent set (possibly of size 1), and replace each edge $e$ of $F$ with a complete bipartite graph of the same color as $e$.}
        For every $\delta > 0$ there is $\eta > 0$ such that if $G$ is a graph on $n \gg \frac{1}{\delta}, v(F')$ vertices with at least $\delta n^{v(F)}$ copies of $F$, then $G$ contains at least $\eta n^{v(F')}$ copies of $F'$. 
    \end{lemma}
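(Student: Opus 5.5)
The plan is to pass to an auxiliary $v$-partite $v$-uniform hypergraph, where $v := v(F)$, and apply the supersaturated form of Erdős's hypergraph Kővári--Sós--Turán theorem~\cite{erdos:64b}. Write $V(F) = \{1,\dots,v\}$, and let $t_i \geq 1$ be the size of the independent set replacing vertex $i$ in $F'$, so that $v(F') = \sum_i t_i$. Let $X_1,\dots,X_v$ be disjoint copies of $V(G)$. Define $\mathcal{H}$ on $X_1 \cup \dots \cup X_v$ by declaring $(x_1,\dots,x_v) \in X_1 \times \dots \times X_v$ an edge if the $x_i$ are distinct vertices of $G$ and, for every $ij \in E(F)$, $x_ix_j \in E(G)$ has color $f(ij)$. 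In other words, edges of $\mathcal{H}$ are color-preserving embeddings of $F$. Every copy of $F$ in $G$ yields at least one such tuple, so $e(\mathcal{H}) \geq \delta n^{v}$.

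The key step is the supersaturated form of Erdős's theorem: a $v$-partite $v$-uniform hypergraph with parts of size $n$ and at least $\delta n^v$ edges contains at least $\eta' n^{vt}$ copies of $K^{(v)}(t,\dots,t)$, where $t = \max_i t_i$ and $\eta' = \eta'(\delta,v,t) > 0$. I would derive this from the plain extremal statement by sampling, and I expect this to be the main point needing care.
\begin{enumerate}
\item Erdős gives $\mathrm{ex}(m, K^{(v)}(t,\dots,t)) = o(m^v)$. Fix a constant $m = m(\delta,v,t)$ so large that every $v$-partite hypergraph with parts of size $m$ and at least $\frac{\delta}{2} m^v$ edges contains a $K^{(v)}(t,\dots,t)$.
\item Choose random $m$-subsets $A_i \subseteq X_i$. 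By averaging, a $\Omega_\delta(1)$ fraction of the choices $(A_1,\dots,A_v)$ induce at least $\frac{\delta}{2}m^v$ edges. Each such choice therefore contains a copy of $K^{(v)}(t,\dots,t)$.
\item A fixed copy of $K^{(v)}(t,\dots,t)$ lies in at most $O(n^{vm - vt})$ choices of $(A_1,\dots,A_v)$. Double counting then gives at least $\eta' n^{vt}$ copies.
\end{enumerate}
Alternatively, one can cite this supersaturated version directly as standard.

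It remains to translate each copy back into a copy of $F'$. Let $S_1 \subseteq X_1, \dots, S_v \subseteq X_v$, each of size $t$, span a complete $v$-partite subhypergraph of $\mathcal{H}$. View the $S_i$ as subsets of $V(G)$.
\begin{itemize}
\item The $S_i$ are pairwise disjoint. If some vertex $x$ lay in $S_i \cap S_j$, then a transversal choosing $x$ in both coordinates would be an edge of $\mathcal{H}$ without distinct entries, which is impossible.
\item For each $ij \in E(F)$, every pair $(x,y) \in S_i \times S_j$ extends to a transversal, so $xy \in E(G)$ with color $f(ij)$. Hence $S_i,S_j$ span a complete bipartite graph of color $f(ij)$.
\end{itemize}
Choosing $t_i$-subsets $S_i' \subseteq S_i$ therefore yields a copy of $F'$ in $G$. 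Here only a (not necessarily induced) subgraph is needed, so edges inside $S_i$ are irrelevant.

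Finally, I would handle the overcounting. Each copy of $F'$ arises from at most $O_{v,t}(n^{vt - v(F')})$ copies of $K^{(v)}(t,\dots,t)$, namely by extending each $S_i'$ to $S_i$. Hence $G$ contains at least $\eta n^{v(F')}$ copies of $F'$ for a suitable $\eta = \eta(\delta, F') > 0$, using $n \gg 1/\delta, v(F')$ to absorb lower-order terms.
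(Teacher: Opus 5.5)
Your proof is correct and follows the same route as the paper's sketch: encode copies of $F$ as edges of an auxiliary $v(F)$-uniform hypergraph, then apply the supersaturated form of Erd\H{o}s's theorem on complete $v(F)$-partite hypergraphs. Your version is more careful than the paper's. The $v(F)$-partite encoding by labeled embeddings, together with taking all parts of size $t=\max_i t_i$, is what ensures that a complete partite subhypergraph assigns each vertex a consistent role and so actually yields a copy of $F'$; the paper's unlabeled sketch passes over this point.
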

    \begin{proof}[Proof sketch]
        Write $V(F) = \{v_1,\dots,v_k\}$, and let $a_i$ be the size of the blowup-set replacing $v_i$ in $F'$.
        Consider a $k$-uniform hypergraph on $V(G)$ whose hyperedges correspond to copies of $F$. Then by assumption, $e(G) \geq \delta n^{k}$. By \cite{erdos:64b} (or rather, the supersaturation version thereof), this hypergraph contains at least $\eta n^{v(F')}$ copies of the complete $k$-partite $k$-graph $K^{(k)}_{a_1,\dots,a_k}$, and $\eta > 0$ depends only on $\delta$. Each such copy corresponds to a copy of $F'$ in $G$.
    \end{proof}

    We are now ready to prove Lemma \ref{lem:template cleaning}.
    We note that unlike \cite{BCPT:21}, we avoid (explicitly) using the Szemer\'edi regularity lemma, instead using the removal lemma (whose proof relies on the regularity lemma). We believe that this makes the presentation cleaner. 

    \begin{proof}[Proof of Lemma \ref{lem:template cleaning}]
    Let $\mathcal{F}$ be the family of $K_r$-templates. Note that every $K_r$-template has either $(r-2) + 4 = r+2$ or $(r-2) + 6 = r+4$ vertices. In particular, $\mathcal{F}$ is finite. Fix constants 
    $$
    1/n \ll \zeta \ll \eta \ll \delta \ll \nu \ll \xi \ll 1/r.
    $$
    We consider two cases.

    \paragraph{Case 1:}
    There is $F \in \mathcal{F}$ such that $G$ has at least $\delta n^{v(F)}$ copies of $F$. 
    Let $F^+$ be defined as in Definition \ref{def:template blowup}. For convenience, put $t := |V(F^+)|$.
    By Lemma \ref{lem:supersaturation},
    $G$ contains at least $\eta n^{t}$ copies of $F^+$. Note that every vertex can participate in at most $tn^{t-1}$ copies of $F^+$.
    Now, let $F_1,\dots,F_m$ be a maximal collection of vertex-disjoint copies of $F^+$ in $G$. We claim that $m \geq \frac{1}{t^2} \eta n$. Indeed, the vertices in $\bigcup_{i=1}^m V(F_i)$ intersect every copy of $F^+$ (by the maximality of $m$), and hence the total number of copies of $F^+$ is at most $m \cdot t \cdot tn^{t-1}$. As this is at least $\eta n^t$, it follows that $m \geq \frac{1}{t^2} \eta n$. From now on, we will consider a collection $F_1,\dots,F_m$ with $m = \lceil \frac{1}{t^2}\eta n \rceil$.

    Let $G'$ be the graph obtained from $G$ by deleting $\bigcup_{i=1}^m V(F_i)$. Then $|V(G) \setminus V(G')| \leq mt \leq \eta n \leq \frac{\xi}{2}n$, and so $\delta(G') \geq \delta(G) - \frac{\xi}{2}n > \frac{r-1}{r}|V(G')|$. By the Hajnal-Szemer\'edi theorem~\cite{HS:70}, $G'$ has a $K_r$-tiling $\mathcal{T}_0$. By Lemma \ref{lem:template}, for every $1 \leq i \leq m$, there is a color $c_i \in [q]$ and two $K_r$-tilings $\mathcal{T}_i^1,\mathcal{T}_i^2$ of $F_i$, such that $c_i$ appears a different number of times in $\mathcal{T}_i^1$ and $\mathcal{T}_i^2$. By averaging, there is $c \in [q]$ such that $c_i = c$ for at least $m/q$ of the indices $i \in [m]$. Without loss of generality, we may assume that $c_i = c$ for $i = 1,\dots,\lceil m/q \rceil$, and that for each such $i$, $\mathcal{T}_i^1$ has more edges of color $c$ than 
    $\mathcal{T}_i^2$. Let $\mathcal{T}$ be the $K_r$-tiling of $G$ consisting of $\mathcal{T}_0$ and $\mathcal{T}_i^1$ for every $1 \leq i \leq m$. Let $\mathcal{T}'$ be the $K_r$-tiling obtained from $\mathcal{T}$ by replacing $\mathcal{T}_i^1$ with $\mathcal{T}_i^2$ for every $1 \leq i \leq \lceil m/q \rceil$; in other words, $\mathcal{T}'$ consists of $\mathcal{T}_0$ and of $\mathcal{T}_i^2$ for every $1 \leq i \leq \lceil m/q \rceil$. 
    Then $\#c(\mathcal{T}) - \#c(\mathcal{T}') \geq m/q \geq \frac{1}{qt^2}\eta n \geq qr\zeta n$. 
    Hence, one of $\mathcal{T},\mathcal{T}'$ satisfies the assertion of Item 1 of the lemma. Indeed, 
    if $\#c(\mathcal{T}) \geq (\frac{1}{q}+\zeta) \cdot e(\mathcal{T})$ then we are done, and otherwise, $\#c(\mathcal{T}') \leq 
    (\frac{1}{q}+\zeta) \cdot e(\mathcal{T}') - qr\zeta n \leq 
    (\frac{1}{q} - (q-1)\zeta) \cdot e(\mathcal{T}')$. 
    Here we used that $e(\mathcal{T}') = \frac{(r-1)n}{2} \leq rn$.
    By averaging, there is a color $c' \in [q] \setminus \{c\}$ such that $\#c'(\mathcal{T}') \geq (\frac{1}{q}+\zeta) \cdot e(\mathcal{T}')$, as required.

    \paragraph{Case 2:}  
    For every $F \in \mathcal{F}$, $G$ has at most $\delta n^{v(F)}$ copies of $F$. 
    By Lemma \ref{lem:removal lemma} with parameter $\frac{\nu^3 \xi}{8}$ in place of $\varepsilon$, we can delete a set $E$ of at most 
    $\frac{\nu^3\xi}{8}n^2$ edges of $G$ and thus obtain a subgraph with no $K_r$-templates. 
    Let $M \subseteq V(G)$ be given by Lemma \ref{lemma:absorb}. 
    Let $W$ be the set of vertices $v \in V(G) \setminus M$ such that $v$ touches at least $\frac{\xi}{2}n$ edges of $E$. As 
    $|E| \leq \frac{\nu^3\xi}{8}n^2$, we have $|W| \leq \frac{\nu^3}{2}n$. Add less than $r$ vertices to $W$ to make sure that $|W|$ is divisible by $r$. 
    Now $|W| \leq \nu^3 n$.
    Let $G'$ be the subgraph of $G$ obtained by deleting the edges in $E$ and the vertices in $M \cup W$. Clearly $G'$ has no $K_r$-templates. Also, $|V(G) \setminus V(G')| = |M| + |W| \leq \nu n + \nu^3 n \leq \frac{\xi}{2}n$. By the definition of $W$, every $v \in V(G')$ has $d_{G'}(v) \geq d_G(v) - \frac{\xi}{2}n - 
    |V(G) \setminus V(G')| \geq d_G(v) - \xi n$. This shows that $\delta(G') \geq \delta(G) - \xi n$. Finally, by the guarantees of Lemma \ref{lemma:absorb}, $V(G) \setminus V(G') = M \cup W$ has a $K_r$-tiling (in $G$). Hence, Item 2 in Lemma \ref{lem:template cleaning} holds. 
    \end{proof}


    \subsection{Graphs with No Templates}
    Lemma \ref{lem:template cleaning} will allow us to assume that the (edge-colored) host graph $(G,f)$ has no $K_r$-templates. Thus, the next step is to study the structure of edge-colored graphs having large minimum degree and no templates. We will prove the following two lemmas. Lemma \ref{lem:main} states that if $(G,f)$ has no $K_r$-templates and $\delta(G) > \frac{r-1}{r}n + C$ then there is a large set $U \subseteq V(G)$ which contains only few colors. Lemma \ref{lem:aux_main} shows that under a stronger assumption on the minimum degree, namely that $\delta(G) > \frac{r}{r+1}n + C$, the set $U$ in fact contains only one color. 

	\begin{lemma}\label{lem:main}
		Let $r \geq 3$ and let $(G, f)$ be an edge-colored graph with $\delta(G) \geq \frac{r-1}{r}|G| + 3$ with no $K_r$-templates. Then there is $U \subseteq V(G)$ with $|U| \geq \delta(G)$ such that the edges of $G[U]$ have at most $\binom{r}{2}$ different colors. 
	\end{lemma}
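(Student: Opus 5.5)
The plan is to take $U := N(K)$ for a suitable copy $K$ of $K_{r-2}$ in $G$, and to show that the absence of templates makes the colouring of $G[N(K)]$ very rigid. Two preliminary facts drive this.

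First, fix any copy $K$ of $K_{r-2}$. Since $\delta(G) \geq \frac{r-1}{r}|G| + 3$, we get $|N(K)| \geq |G| - (r-2)(|G| - \delta(G)) \geq \frac{2}{r}|G| + 3(r-2)$. Inside $N(K)$, each vertex has degree at least $|N(K)| - (|G|-\delta(G))$, and this is more than $|N(K)|/2$. So $G[N(K)]$ is dense.

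Second, since there are no templates, every $4$-cycle and every $6$-cycle in $N(K)$ is balanced. For a $4$-cycle $w_1w_2w_3w_4$ this says $\{f(w_1w_2),f(w_3w_4)\} = \{f(w_2w_3),f(w_4w_1)\}$.

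The key structural claim to aim for is the following. If every $4$- and $6$-cycle of a graph $H$ with minimum degree above $|H|/2$ is balanced, then there is a partition $V(H) = A_1 \cup \dots \cup A_s$ and a symmetric colour function $\phi$ with $f(uv) = \phi(i,j)$ for $u \in A_i$, $v \in A_j$. In other words, the colour depends only on the parts. Moreover, each $A_i$ is independent, or is monochromatic inside.

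To prove the claim, I would define $u \sim v$ if $f(uw) = f(vw)$ for every common neighbour $w$ of $u$ and $v$. Balanced $4$-cycles $u w v w'$ force consistency between common neighbours, and balanced $6$-cycles give transitivity when $u$ and $v$ have few common neighbours. High minimum degree guarantees many common neighbours, which makes this work.

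Given this structure, $G[U]$ is essentially a blow-up. I would then choose $K$ cleverly, namely a clique $K$ of maximal size extended to a $K_r$-structure, so that the parts of $N(K)$ number at most $r$. Indeed, a bounded-colour blow-up with minimum degree $> \frac{r-1}{r}|G|$ cannot have $r+1$ mutually adjacent parts unless the template condition is violated via $K$. Then the colours inside $U$ number at most $\binom{r}{2}$, plus monochromatic interiors. The interiors should be absorbed by merging, or ruled out by the degree condition.

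For the size bound $|U| \geq \delta(G)$, I expect instead to take $U = N(v) \cup \{v\}$ for a single vertex $v$, or a union of parts. The cleanest target is $U = N(v)$ for a suitable $v$, so that $|U| \geq \delta(G)$ holds automatically. In that case the real task is to show that $G[N(v)]$ uses at most $\binom{r}{2}$ colours. Here I would apply the structural claim inductively on $r$. Inside $N(v)$ the degree condition becomes $\delta \geq \frac{r-2}{r-1}|N(v)| + O(1)$, and a template for $K_{r-1}$ in $G[N(v)]$ together with $v$ gives a $K_r$-template in $G$. So induction gives a large set in $N(v)$ with $\binom{r-1}{2}$ colours, and the edges from that set add at most $r-1$ new colours, for a total of $\binom{r-1}{2} + (r-1) = \binom{r}{2}$.

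The main obstacle is making this induction close. The induction only yields a subset of $N(v)$ of size $\delta(G[N(v)])$, not all of $N(v)$. I would handle this by choosing $v$ and iterating to build a clique $v_1,\dots,v_{r-2}$, then analysing $N(v_1,\dots,v_{r-2})$ via the balanced-cycle rigidity at the base case $r = 2$. That base case is a graph of density above $1/2$ with all $4$- and $6$-cycles balanced, which should have only one colour on a large part. Finally, I would show that the set $U$ obtained this way still has size at least $\delta(G)$, using the $+3$ slack.
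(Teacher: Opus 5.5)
There is a genuine gap: the proposal never gives a mechanism that produces a set of size at least $\delta(G)$ carrying at most $\binom{r}{2}$ colors. The sets you actually control are too small. For $r\ge 4$, $N(K)$ with $K$ an $(r-2)$-clique can have only $\frac{3}{r+1}|G|$ vertices, as in a balanced complete $(r+1)$-partite graph. The induction inside $G[N(v)]$ only guarantees some $U'\subseteq N(v)$ with $|U'|\ge\delta(G[N(v)])$, which is in general a linear amount below $\delta(G)$; in the same graph it is $\frac{r-1}{r+1}|G|$ versus $\frac{r}{r+1}|G|$. Descending further to $N(v_1,\dots,v_{r-2})$ only makes this worse, and an additive $+3$ cannot repair a linear deficit. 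If instead you insist on $U=N(v)$, the step ``the edges from that set add at most $r-1$ new colours'' has no justification, since nothing you prove controls the edges between $U'$ and $N(v)\setminus U'$.

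In fact the whole difficulty is the choice of $v$, and your heuristics do not address it. Take $r=3$ and the left example in Figure~\ref{fig:construction examples}: a balanced complete $4$-partite graph on $A\cup B\cup C\cup D$. Every edge at $a\in A$ has color $g(a)$, with $g$ taking three values, and the pairs $BC,BD,CD$ get three further colors. Here $\delta(G)=\frac34|G|$ and there is no $K_3$-template. Indeed, every vertex neighborhood is complete tripartite, and along an even cycle there each visit to a vertex contributes one odd-position and one even-position edge. This forces the alternating counts of each part-pair, and of each color $g(a)$, to vanish.

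Yet for every $v\notin A$, $N(v)$ has $4>\binom{3}{2}$ colors. For $v\in D$, $G[N(v)]$ is exactly a part-determined blow-up as in your structural claim: it has five parts, no four mutually adjacent, and still four colors. The only monochromatic set of the size your induction guarantees is $U'=B\cup C$. The edges from $U'$ to $A$ then add three new colors, not $r-1=2$. Meanwhile $U'\cup\{v\}$ has the right number of colors but only $\frac12|G|+1$ vertices. Only $v\in A$ works.

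The paper's proof is built precisely around selecting $v$:
\begin{itemize}
\item the center of a non-monochromatic (hence $2$-colored-star) $K_{r+2}$;
\item a vertex of a $K_{r+1}$ with at least $\binom{r}{2}+1$ colors, chosen so that the remaining $r$-clique has at least $\binom{r-1}{2}+2$ colors, which forces a rainbow triangle at every step;
\item when every $K_{r+1}$ has a monochromatic vertex, the center of a centered clique chosen extremally (having an isolated vertex and the fewest occurrences of its main color). In the example this picks $v\in A$.
\end{itemize}
Colors are then transferred along the clique chains of Lemma~\ref{lem:chain main} via the bowtie lemma. In the remaining case, a $K_{r+1}$ with no monochromatic vertex and at most $\binom{r}{2}$ colors, $U$ is not a neighborhood at all. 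It is the set of vertices with $r$ neighbors in that clique, of size at least $\delta(G)$ by double counting. Your proposal contains none of these ingredients or a substitute for them, and its structural claim is only asserted, not proved.
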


    \begin{lemma}\label{lem:aux_main}
		Let $r \geq 3$ and let $(G, f)$ be an edge-colored graph with $\delta(G) \geq \frac{r}{r + 1}|G| + 3$ with no $K_r$-templates. Then there exists a subset $U \subseteq V(G)$ with $|U| \geq \delta(G)$ such that $G[U]$ is monochromatic.
	\end{lemma}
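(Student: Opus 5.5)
Since Lemma \ref{lem:main} may be assumed, the idea is to start from the set $U$ it provides and show that, once $\delta(G) \geq \frac{r}{r+1}|G| + 3$, the colors present in $G[U]$ must coincide. Rather than working with $U$ directly, I would work with the common neighborhood of a clique. Write $n=|G|$.

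\textbf{Step 1: build a large clique.} Every set of $s \le r$ vertices has common neighborhood of size at least $n - s\bigl(n-\delta(G)\bigr) \geq n - \frac{s}{r+1}n - 3s$. Choosing greedily, I find a copy $K'$ of $K_{r-1}$ with $|N(K')| \ge \frac{2}{r+1}n - 3(r-1)$, and a copy $K$ of $K_{r-2}$ with $|N(K)| \geq \frac{3}{r+1}n - 3(r-2)$.

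\textbf{Step 2: local consequences of having no templates.} Let $K$ be a copy of $K_{r-2}$ and put $H = G[N(K)]$. By definition, a $K_r$-template is $K$ together with an unbalanced $C_4$ or $C_6$ inside $N(K)$. So in $H$ every $4$-cycle and every $6$-cycle is balanced.
\begin{itemize}
\item For a $4$-cycle $w_1w_2w_3w_4$, balance means $\{f(w_1w_2),f(w_3w_4)\}=\{f(w_2w_3),f(w_4w_1)\}$. In particular, in any $K_4 \subseteq H$ the three perfect matchings have the same color multiset. A short case check then shows that each $K_4$ in $H$ is either monochromatic or has a ``rainbow-matching'' type coloring, i.e.\ each perfect matching carries the same pair of colors.
\item Every vertex of $H$ has degree at least $|N(K)| - 2(n-\delta) > |N(K)|/3$ inside $H$, since $\frac{3}{r+1} - \frac{2}{r+1} = \frac{1}{r+1}$. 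So $H$ is dense: its relative minimum degree exceeds $1/3$.
\end{itemize}

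\textbf{Step 3: show the colors collapse.} I expect this step to be the main obstacle. The aim is to show that balance of all $C_4$'s and $C_6$'s forces a potential-like structure: for each vertex $v$ and color $c$, the $c$-neighborhoods behave consistently. The plan is to fix an edge $xy$ of color $c$ in $H$ and use $4$-cycles $x y z w$ to propagate the color $c$ across most of the edges. Two situations arise:
\begin{itemize}
\item If two colors $c \neq c'$ both appear, I would locate a $6$-cycle (or $4$-cycle) through edges of different colors whose alternating multisets differ. Producing such a cycle requires common neighbors, which are guaranteed by the density bound from Step 2.
\item Alternatively, I would combine with Lemma \ref{lem:main}: take $U$ with at most $\binom{r}{2}$ colors and $|U|\ge\delta(G)$. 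Since $\delta(G) > \frac{r}{r+1}n$, the set $U$ contains many $K_r$'s, and every pair of vertices in $U$ has many common neighbors inside $U$. I would then argue that two edges of different colors sharing a vertex in $U$ extend, via a common-neighborhood clique $K$, to an unbalanced $C_4$ or $C_6$, which contradicts the no-template hypothesis. Because $G[U]$ has minimum degree above $\frac{r-1}{r}|U|$, the graph $G[U]$ is connected through such configurations, so adjacent edges share a color and $G[U]$ is monochromatic.
\end{itemize}

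The critical point is the extra $\frac{1}{r+1}$ slack. It is what guarantees that, for two adjacent edges $uv,vw$ of different colors, I can find a copy of $K_{r-2}$ in $N(u)\cap N(v)\cap N(w)$ together with vertices closing an unbalanced $4$- or $6$-cycle. This cannot be done at density $\frac{r-1}{r}$, which is consistent with Construction \ref{construction 1}.
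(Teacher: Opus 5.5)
Your Step~3, which has to carry the whole lemma, is only a plan, and the mechanism it proposes is false. Both bullets rest on one claim: two differently coloured edges (inside $N(K)$, or sharing a vertex in $U$) can always be closed into an unbalanced $4$- or $6$-cycle in the common neighbourhood of some $K_{r-2}$.

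Here is a counterexample. Take $G=K_n$ with a distinguished vertex $s$. Colour every edge at $s$ with $c_1$ and every other edge with $c_2$.
\begin{itemize}
\item Any even cycle through $s$ uses two consecutive $c_1$-edges, one in each alternating class. Any other even cycle is monochromatic. So every even cycle is balanced, $G$ has no $K_r$-template, and $\delta(G)=n-1$.
\item Yet $sx$ and $xy$ are adjacent edges of different colours, and $N(K)$ contains both colours for every $K_{r-2}$ avoiding $s$.
\end{itemize}
So no local argument of the form ``two colours $\Rightarrow$ unbalanced short cycle'' can exist.

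The same example breaks your black-box use of Lemma~\ref{lem:main}. The set $U=V(G)$ satisfies its conclusion (two colours, $|U|\ge\delta(G)$), but $G[U]$ is not monochromatic. The conclusion of Lemma~\ref{lem:aux_main} holds only for a well-chosen $U$ (here $N(s)$), so you need a global argument that identifies which set to take.

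The missing idea is how the slack $\delta(G)>\frac{r}{r+1}|G|$ is spent. It gives a $K_{r+2}$ by Tur\'an's theorem. Lemma~\ref{lem:very simple kr+2} then shows every $K_{r+2}$ is monochromatic or a 2-colored star, because any four vertices of a $K_{r+2}$ lie in the common neighbourhood of the other $r-2$. The proof then splits into two cases:
\begin{itemize}
\item If some $K_{r+2}$ is a 2-colored star with centre $v$, Lemma~\ref{lem:very simple K_{r+1}} shows that every colour in $N(v)$ already appears in the monochromatic part. That lemma is a chain argument through excellent triples. So $U=N(v)$ is monochromatic.
\item If every $K_{r+2}$ is monochromatic, pick $v$ in one of them, say $K$. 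For any edge $uw$ in $N(v)$, apply Lemma~\ref{lem:chain main} with parameter $r+1$. This gives a chain of $(r+1)$-cliques in $N(v)$ from $K\setminus\{v\}$ to a clique containing $u,w$. Each clique together with $v$ is a monochromatic $K_{r+2}$, and consecutive ones share $r\ge 3$ vertices, hence an edge. So the colour propagates along the chain to $uw$.
\end{itemize}

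Your Steps~1--2 (large common neighbourhoods and balanced $K_4$'s) cannot replace this propagation along clique chains anchored at a specific vertex $v$.
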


    Lemma \ref{lem:main} will be used in the proofs of Theorems \ref{thm:r=4} and \ref{thm:r=3}, and Lemma \ref{lem:aux_main} will be used in the proof of Theorem \ref{thm:general upper bound}.
    The proofs of Lemmas \ref{lem:main} and \ref{lem:aux_main} are by far the most involved part of our argument. We defer these proof to Section \ref{sec: proof of main lemma}.

    \subsection{Extremal Properties of Tilings}
    We end Section \ref{sec:lemmas} with one more lemma which is used in the proofs of our main results. 
    This is Lemma \ref{lem: extremal tiling main} below, which shows that if $\mathcal{T}$ is a $K_r$-tiling of $G$ and $U \subset V(G)$ is a large subset, then a significant proportion of the edges of $\mathcal{T}$ must be contained in $U$. This will then be combined with Lemmas \ref{lem:main} and \ref{lem:aux_main} to show that under the conclusion of these lemmas, every $K_r$-tiling has large discrepancy (for a suitable regime of the parameter $q$). We start with the following auxiliary lemma.
    
    \begin{lemma}\label{lem: extremal tiling fact}
        Let $r, n \in \mathbb{N}$ with $r \mid n$, and let $V(K_n) = X \sqcup Y$ be a vertex partition. Let $\mathcal{T}$ be a $K_r$-tiling of $K_n$ that minimizes the number of edges in $\mathcal{T}$ induced by $X$ (i.e., edges with both endpoints in $X$). Then there exists an integer $k \ge 0$ such that for every $r$-clique $K \in \mathcal{T}$, $|V(K) \cap X| \in \{k, k+1\}$.
    \end{lemma}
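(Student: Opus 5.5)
We argue by an exchange argument, using that $x \mapsto \binom{x}{2}$ is strictly convex on the integers. In the complete graph $K_n$ every $r$-subset spans a clique. A $K_r$-tiling $\mathcal{T}$ is therefore just a partition of $V(K_n)$ into $r$-sets. The number of edges of $\mathcal{T}$ induced by $X$ equals
\[
\sum_{K \in \mathcal{T}} \binom{x_K}{2}, \qquad \text{where } x_K := |V(K) \cap X|.
\]
So the problem reduces to showing that, at a minimizer, the integers $x_K$ differ pairwise by at most one.

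Suppose for contradiction that there are $K, K' \in \mathcal{T}$ with $x_K \geq x_{K'} + 2$. Since $x_K \geq 2 > 0$, the clique $K$ contains a vertex $u \in X$. Since $x_{K'} \leq x_K - 2 \leq r-2 < r$, the clique $K'$ contains a vertex $w \in Y$. Swap them: set $\tilde K := (V(K) \setminus \{u\}) \cup \{w\}$ and $\tilde K' := (V(K') \setminus \{w\}) \cup \{u\}$. Both are $r$-sets and hence cliques in $K_n$, and replacing $K, K'$ by $\tilde K, \tilde K'$ yields another $K_r$-tiling $\mathcal{T}'$.

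Now compare the two tilings. All other cliques are unchanged. We have $x_{\tilde K} = x_K - 1$ and $x_{\tilde K'} = x_{K'} + 1$. The change in the number of $X$-induced edges is
\[
\Bigl(\binom{x_K - 1}{2} - \binom{x_K}{2}\Bigr) + \Bigl(\binom{x_{K'} + 1}{2} - \binom{x_{K'}}{2}\Bigr) = -(x_K - 1) + x_{K'} \leq -1 < 0.
\]
This contradicts the minimality of $\mathcal{T}$.

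Hence $\max_K x_K - \min_K x_K \leq 1$. Taking $k := \min_{K \in \mathcal{T}} x_K \geq 0$, we get $x_K \in \{k, k+1\}$ for every $K \in \mathcal{T}$. There is no real obstacle in this argument; the only points to check are that the swap keeps the sets of size $r$ (automatic in $K_n$) and that the vertices $u$ and $w$ exist, which follows from the gap being at least $2$.
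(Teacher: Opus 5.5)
Your proposal is correct and follows essentially the same route as the paper: a swap of an $X$-vertex from the clique with more $X$-vertices for a $Y$-vertex from the clique with fewer. Your change $-(x_K-1)+x_{K'}\le -1$ is the paper's $i-j+1<0$, which contradicts minimality. Your explicit check that $u$ and $w$ exist, and your choice $k:=\min_K x_K$, are slightly more careful than the paper's write-up, but the argument is the same.
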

    \begin{proof}
        For $K \in \mathcal{T}$, let 
        $\ell(K) := |K \cap X|$. Suppose the lemma is false. Then there exist $K, K' \in \mathcal{T}$ such that $\ell(K') \ge \ell(K) + 2$. Put $i = \ell(K)$ and $j = \ell(K')$.
    
        Since $i < j$, there must exist $y \in K \cap Y$ and $x \in K' \cap X$. We construct a new tiling $\mathcal{T}'$ by swapping these vertices. Namely, define two new vertex sets:
        \[ K_{new} := (K \setminus \{y\}) \cup \{x\}, \quad K'_{new} := (K' \setminus \{x\}) \cup \{y\}. \]
        So $K_{new}$ and $K'_{new}$ are $r$-cliques (in $K_n$). Now, let $\mathcal{T}' := (\mathcal{T} \setminus \{K, K'\}) \cup \{K_{new}, K'_{new}\}$.
    
        The new intersection sizes are $\ell(K_{new}) = i+1$ and $\ell(K'_{new}) = j-1$. Consequently, the change in the number of edges induced by $X$ is
        \[ \left[ \binom{i+1}{2} + \binom{j-1}{2} \right] - \left[ \binom{i}{2} + \binom{j}{2} \right] = i - j + 1. \]
        Since $j \ge i + 2$, we have $i - j + 1 < 0$, which contradicts the minimality of $\mathcal{T}$.
    \end{proof}
    \begin{lemma}\label{lem: extremal tiling main}
    Let $r, n \in \mathbb{N}$ with $r \mid n$, and let $V(K_n) = X \sqcup Y$ be a vertex partition such that $|X| > \frac{r-1}{r} n$. Then for every $K_r$-tiling $\mathcal{T}$ of $K_n$, the fraction of edges of $\mathcal{T}$ induced by $X$ is at least $\frac{2|X|}{n} - 1$.
    \end{lemma}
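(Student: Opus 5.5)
The plan is to reduce to an extremal tiling via Lemma \ref{lem: extremal tiling fact} and then compute exactly. The number of edges of $\mathcal{T}$ induced by $X$ depends only on the intersection sizes $\ell(K)=|V(K)\cap X|$ for $K\in\mathcal{T}$. It therefore suffices to lower-bound this quantity for a tiling $\mathcal{T}^*$ of $K_n$ that minimizes the number of $X$-induced edges. Every $K_r$-tiling $\mathcal{T}$ of $K_n$ has at least as many such edges as $\mathcal{T}^*$. Moreover, the total number of edges is always $e(\mathcal{T})=\frac{n}{r}\binom{r}{2}=\frac{(r-1)n}{2}$, independent of the tiling. So a lower bound for $\mathcal{T}^*$ transfers directly to the fraction for every $\mathcal{T}$.

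By Lemma \ref{lem: extremal tiling fact}, there is $k\ge 0$ with $\ell(K)\in\{k,k+1\}$ for all $K\in\mathcal{T}^*$. The average of $\ell(K)$ over the $n/r$ cliques is $\frac{|X|}{n/r}=\frac{r|X|}{n}$. By hypothesis this lies in the interval $(r-1,r]$. Hence every clique has $\ell(K)\in\{r-1,r\}$. Here we either take $k=r-1$, or, in the degenerate case $X=V(K_n)$, all cliques satisfy $\ell(K)=r$; the counting below covers both cases.

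Let $s$ be the number of cliques with $\ell(K)=r$ and $t$ the number with $\ell(K)=r-1$. These satisfy
\[
s+t=\frac{n}{r}, \qquad rs+(r-1)t=|X|,
\]
and solving gives $s=|X|-\frac{(r-1)n}{r}$. The number of $X$-induced edges of $\mathcal{T}^*$ is
\[
s\binom{r}{2}+t\binom{r-1}{2}=\frac{n}{r}\binom{r-1}{2}+s(r-1).
\]
Dividing by $\frac{(r-1)n}{2}$, the fraction equals
\[
\frac{r-2}{r}+\frac{2|X|}{n}-\frac{2(r-1)}{r}=\frac{2|X|}{n}-1,
\]
which is the claimed bound.

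There is no real obstacle beyond these computations. The one point needing care is pinning down $k$: the hypothesis $|X|>\frac{r-1}{r}n$ is used precisely to force all intersection sizes into $\{r-1,r\}$.
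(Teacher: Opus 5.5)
Your proof is correct and follows the paper's argument essentially step for step. You pass to an $X$-edge-minimizing tiling, use Lemma \ref{lem: extremal tiling fact} together with $|X|>\frac{r-1}{r}n$ to force every intersection size into $\{r-1,r\}$, and then count exactly. Your $s,t$ bookkeeping is the same computation as the paper's, since $t=n-|X|=|Y|$ is exactly the number of cliques meeting $Y$.
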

    \begin{proof}
        It suffices to consider a $K_r$-tiling $\mathcal{T}$ which minimizes the number of edges induced by $X$. So let $\mathcal{T}$ be such a $K_r$-tiling.
        By Lemma \ref{lem: extremal tiling fact}, there exists $k \in \{0, \dots, r-1\}$ such that 
        $|K \cap X| \in \{k, k+1\}$ for every $K \in \mathcal{T}$.
        Since $|\mathcal{T}| = n/r$, we have 
        $|X| \le \frac{n}{r}(k+1)$. The condition 
        $|X| > \frac{r-1}{r} n$ now implies $k + 1 > r - 1$. As $k$ is an integer strictly less than $r$, we must have $k = r-1$. Thus, every clique in $\mathcal{T}$ intersects $Y$ in either $1$ vertex or $0$ vertices.
        
        Since the cliques in $\mathcal{T}$ are vertex-disjoint and cover $Y$, exactly $|Y|$ cliques intersect $Y$ (each containing exactly one vertex from $Y$). Each of these cliques contributes $\binom{r-1}{2}$ edges to $X$. The remaining $\frac{n}{r} - |Y|$ cliques are contained entirely in $X$ and contribute $\binom{r}{2}$ edges each.
        Letting $e_X(\mathcal{T})$ denote the number of edges in $\mathcal{T}$ induced by $X$, we have
        \[ e_X(\mathcal{T}) = |Y| \binom{r-1}{2} + \left(\frac{n}{r} - |Y|\right) \binom{r}{2} = \frac{n}{r}\binom{r}{2} - (r-1)|Y|. \]
        The total number of edges in $\mathcal{T}$ is $e(\mathcal{T}) = \frac{n}{r}\binom{r}{2} = \frac{n(r-1)}{2}$. Dividing $e_X(\mathcal{T})$ by $e(\mathcal{T})$ completes the \nolinebreak proof:
        \[ \frac{e_X(\mathcal{T})}{e(\mathcal{T})} = 1 - \frac{(r-1)|Y|}{n(r-1)/2} = 1 - \frac{2(n - |X|)}{n} = \frac{2|X|}{n} - 1. \qedhere \]
    \end{proof}
	
	\section{Proof of Lemmas \ref{lem:main} and \ref{lem:aux_main}}
    \label{sec: proof of main lemma}
    As mentioned above, the proofs of Lemmas \ref{lem:main} and \ref{lem:aux_main} are by far the most involved part of our argument. The reader may therefore first skip to Section \ref{sec:proof of theorems}, where these lemmas are applied (together with the other lemmas from Section \ref{sec:lemmas}) to prove the main results of the paper.   
    
    Let us give a brief overview of the proof. Recall that our goal is to find a large set $U$ which contains only few colors (only $\binom{r}{2}$ colors for Lemma \ref{lem:main} and only 1 color for Lemma \ref{lem:aux_main}). We have two strategies for finding $U$: 
    \begin{enumerate}
        \item Pick $U$ to be the neighborhood of some vertex $v$.
        \item Pick $U$ to consist of all vertices having $r$ neighbors in a fixed $(r+1)$-clique $K$.
    \end{enumerate}
    These two cases are handled in Sections \ref{subsec:U = N(v)} and \ref{subsec:U derived from K}, respectively. In Case 2, we show that under certain conditions, the colors contained in $U$ are precisely those contained in $K$. Also, whenever applying Case 2, we assume that $K$ contains at most $\binom{r}{2}$ colors, giving the required result. 

    Case 1 is more complicated. To show that $U = N(v)$ contains only few colors, we proceed as follows. First, we fix an $r$-clique $K \subseteq N(v)$ with certain properties. Then, given any edge $uw$ inside $N(v)$, we find a {\em chain} of $r$-cliques $K_1,\dots,K_m \subseteq N(v)$ such that $K_1 = K$ and $u,w \in K_m$, and $|K_i \cap K_{i+1}| = r-2$ for all $i$. We then argue that for every $i$, $K_i$ and $K_{i+1}$ have the same color profile. Namely, we can ``transfer" the colors from $K_i$ to $K_{i+1}$. 
    Applying this for all $i=1,\dots,m-1$, we conclude
    that every color appearing in $K_m$ also appears in $K_1 = K$. But $K_m$ contains $u,w$, so the color of $uw$ is one of the at most $\binom{r}{2}$ appearing in $K$ (as $|K|=r$). Since $uw$ was an arbitrary edge in $N(v)$, this shows that $N(v)$ contains only $\binom{r}{2}$ colors. (The argument for Lemma \ref{lem:aux_main} is similar). We note that for Lemma \ref{lem:main}, we have several different settings where the above argument can be carried out, which are applied in different cases; see Lemmas \ref{lem:very simple K_{r+1}}, \ref{lem:K_{r+1} with many colors} and \ref{lem:K_{r+1} with monochromatic vertex}. The analysis of the aforementioned color-transferal is given in Section \ref{subsec:transfering colors}. This analysis uses as a subroutine some observations on the possible colorings of a {\em bowtie} contained in the common neighborhood of an $(r-2)$-clique (a bowtie consists of two triangles sharing a vertex). Bowties are handled in Section \ref{subsec: bowties}. Finally, we prove the existence of the necessary clique-chains (as mentioned above) in Section \nolinebreak \ref{subsec:chains}.

    We will need the following simple lemma.
    \begin{lemma}[Minimum degree in neighborhood]\label{lem:min degree in neighbourhood}
		Let $r \geq 2$ and let $G$ be a graph satisfying $\delta(G) \geq \frac{r-1}{r}|G| + C$ for some $C \geq 0$. Then for every $v \in V(G)$, it holds that $\delta(G[N(v)]) \geq \frac{r-2}{r-1}|N(v)| + C$. 
	\end{lemma}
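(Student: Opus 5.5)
Fix $v \in V(G)$ and write $n = |G|$, $d = |N(v)|$. We have $d \ge \delta(G) \ge \frac{r-1}{r}n + C$. The plan is to lower-bound the degree of an arbitrary vertex $u \in N(v)$ inside $G[N(v)]$ by counting its non-neighbours.

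Since every vertex of $G$ has at most $n - 1 - \delta(G)$ non-neighbours, $u$ has at most $n - \delta(G)$ non-neighbours in $N(v)$. Therefore
\[
d_{G[N(v)]}(u) \ge d - (n - \delta(G)) = d - n + \delta(G).
\]
It then suffices to prove
\[
d - n + \delta(G) \ge \frac{r-2}{r-1}\,d + C,
\]
which is equivalent to $\frac{d}{r-1} + \delta(G) - n \ge C$.

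The left-hand side is increasing in $d$, so the worst case is $d = \delta(G)$. There it becomes $\frac{r}{r-1}\delta(G) - n$. Using $\delta(G) \ge \frac{r-1}{r}n + C$ gives
\[
\frac{r}{r-1}\delta(G) - n \ge n + \frac{r}{r-1}C - n = \frac{r}{r-1}C \ge C,
\]
where the last step uses $C \ge 0$. Hence the bound holds for all $d \ge \delta(G)$, and therefore $\delta(G[N(v)]) \ge \frac{r-2}{r-1}|N(v)| + C$.

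This is a routine calculation with no real obstacle. The only step needing care is the monotonicity reduction to $d = \delta(G)$, which handles the fact that $|N(v)|$ may be larger than $\delta(G)$.
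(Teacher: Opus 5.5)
Your proof is correct and is essentially the paper's argument. Both lower-bound $d_{G[N(v)]}(u)$ by $\delta(G) + |N(v)| - |G|$: you count the non-neighbours of $u$, while the paper bounds $|N(u)\setminus N(v)| \le |G| - d(v)$. Both then substitute the minimum-degree hypothesis. Because you use $|N(v)| \ge \delta(G)$ in the final step, you even get the slightly stronger additive term $\frac{r}{r-1}C$.
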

	\begin{proof}
		  Let $v \in V(G)$ and let $u \in N(v)$. We have
          \[\begin{aligned}
              d_{N(v)}(u) & = |N(u) \cap N(v)| = d(u) - |N(u) \setminus N(v)| \ge d(u) - (\vert G \vert - d(v))\\
              & \ge \frac{r - 1 }{r} \vert G \vert + C -\vert G \vert + d(v) 
              = d(v) - \frac{1}{r} |G| + C\\
              & \ge \frac{r-2}{r-1} d(v) + C,
          \end{aligned}\]
          where in the last inequality we used that
          $d(v) \geq \frac{r-1}{r}|G|$. This proves the lemma.
	\end{proof}
    \noindent
    By iterating the above lemma, we get the following:
    \begin{lemma}[Minimum degree in neighborhood of $K_t$] \label{lem: min degree in neighbourhood of K}
        Let $r \ge 2$ and let $G$ be a graph satisfying $\delta(G) \ge \frac{r - 1}{r} \vert G \vert + C$ for some $C \geq 0$. Let $1 \leq t \leq r - 1$. Then for every copy $K$ of $K_t$ in $V(G)$, it holds that $\delta(G[N(K)]) \ge \frac{r - t - 1}{r - t}\vert N(K) \vert + C$.
    \end{lemma}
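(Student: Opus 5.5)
We argue by induction on $t$, using Lemma \ref{lem:min degree in neighbourhood} as the inductive step. The key observation is that the hypothesis of Lemma \ref{lem:min degree in neighbourhood} is again of the form ``$\delta(H) \geq \frac{s-1}{s}|H| + C$'', with $s$ one smaller. So the lemma can be reapplied inside the neighbourhood with the same additive constant $C$.

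For $t=1$, the clique $K$ is a single vertex $v$, and $N(K)=N(v)$. The claim $\delta(G[N(v)]) \geq \frac{r-2}{r-1}|N(v)|+C$ is exactly Lemma \ref{lem:min degree in neighbourhood}.

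Now let $2 \leq t \leq r-1$ and suppose the statement holds for $t-1$ (for every graph satisfying the hypothesis with this $r$ and $C$). Write $K = K' \cup \{v\}$, where $K'$ is a copy of $K_{t-1}$ and $v \in N(K')$.
\begin{itemize}
\item Set $H := G[N(K')]$. By induction, $\delta(H) \geq \frac{s-1}{s}|H| + C$ with $s := r-t+1 \geq 2$.
\item Since $v \in V(H)$, apply Lemma \ref{lem:min degree in neighbourhood} to $H$ (with $s$ in place of $r$) at the vertex $v$. This gives $\delta(H[N_H(v)]) \geq \frac{s-2}{s-1}|N_H(v)| + C$.
\item Finally, $N_H(v) = N(v) \cap N(K') = N(K)$, and $H[N_H(v)] = G[N(K)]$. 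Since $\frac{s-2}{s-1} = \frac{r-t-1}{r-t}$, this is exactly the claim.
\end{itemize}

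The only point needing care is that Lemma \ref{lem:min degree in neighbourhood} requires its parameter to be at least $2$. This holds because $s = r-t+1 \geq 2$ exactly when $t \leq r-1$, which is also why the statement is restricted to that range. The lemma also implicitly uses $d(v) \geq \frac{s-1}{s}|H|$, and this follows from the minimum-degree hypothesis on $H$ because $C \geq 0$. No step is a real obstacle; the argument is bookkeeping of the parameter shift $r \mapsto r-1$ at each step.
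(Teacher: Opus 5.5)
Your proposal is correct and follows essentially the same route as the paper: induction on $t$ with base case Lemma \ref{lem:min degree in neighbourhood}, and an inductive step that applies that lemma inside $G[N(K')]$ at the remaining vertex with parameter $r-t+1 \geq 2$. Your identification $N_H(v) = N(K)$ and the parameter bookkeeping match the paper's argument exactly.
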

    \begin{proof}
        By induction on $t$.
        When $t = 1$, this is just Lemma \ref{lem:min degree in neighbourhood}. For the induction step, suppose the statement holds for some $t \leq r - 2$, and let $K$ be a copy of $K_{t + 1}$ with vertices $v_1, v_2, \dots, v_{t + 1}$. Put $K' = K - v_{t + 1}$ and $G' = G[N(K')]$, so that $K'$ is a copy of $K_t$ in $G$ and $v_{t + 1} \in G'$. By the induction hypothesis, $\delta(G') \ge \frac{r - t - 1}{r - t}\vert G' \vert + C$. 
        Note that $G[N(K)]$ is the subgraph of $G'$ induced by $N_{G'}(v_{t+1})$.
        Also, we have $r - t \ge 2$ (as $t \leq r - 2$). Lemma \ref{lem:min degree in neighbourhood} (applied to $G'$ and $v_{t+1}$) gives 
        \[\delta(G[N(K)]) \ge \frac{r - t - 2}{r - t - 1} \cdot \vert N(K) \vert + C, 
        \]
        establishing the induction step.
    \end{proof}
	\subsection{Bowties}\label{subsec: bowties}
    \begin{definition}
        Let $G$ be a graph. A \textbf{bowtie} is a tuple $(v, x_1y_1, x_2y_2)$ such that $v,x_1,y_1,x_2,y_2$ are distinct vertices of $G$ and $\{v, x_1, y_1\}$ and $\{v, x_2, y_2\}$ are triangles in $G$.
    \end{definition}
    We now show that if an edge-colored graph $(G,f)$ has minimum degree above $\frac{r-1}{r}|G|$ and no $K_r$-templates, then every bowtie in $G$ which is contained in the common neighborhood of a $K_{r-2}$ has a constraint on its colors. This is established in the following lemma.
    \begin{lemma}[Bowtie lemma]\label{lem: Bowtie fact} Let $r \ge 3$ and let $(G, f)$ be an edge-colored graph satisfying $\delta(G) \ge \frac{r-1}{r}|G| + 3$ and having no $K_r$-template. Let $K$ be a copy of $K_{r-2}$ in $G$, and let $(v, x_1y_1, x_2y_2)$ be a bowtie contained in $N(K)$. Then
    \begin{equation}\label{eq:bowtie lemma}
        \{f(x_1y_1), f(vx_2), f(vy_2)\} = \{f(x_2y_2), f(vx_1), f(vy_1)\}
    \end{equation}
    as multisets.
    \end{lemma}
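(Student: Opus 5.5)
The plan is to work inside $H := G[N(K)]$ and use the minimum degree there to find one extra vertex $u \in N(K)$ adjacent to all of $v, x_1, y_1, x_2, y_2$. The absence of $K_r$-templates then says that every $4$-cycle and every $6$-cycle in $N(K)$ is balanced. Using $u$, we will build several short cycles through the bowtie and chain their balance conditions together until the colors of $u$ cancel, leaving exactly \eqref{eq:bowtie lemma}.

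\textbf{Step 1 (finding $u$).} By Lemma \ref{lem: min degree in neighbourhood of K} with $t=r-2$ and $C=3$, we have $\delta(H) \ge \frac12|N(K)| + 3$. Any two vertices of $H$ therefore have at least $7$ common neighbours in $H$. This alone is not enough to get a common neighbour of five vertices, so we instead look for a vertex $u$ adjacent to just $x_1,y_1,x_2,y_2$. Set $A = N_H(x_1)\cap N_H(y_1)$ and $B = N_H(x_2)\cap N_H(y_2)$. Each has size at least $7$, but they need not intersect, so some alternative is needed if $A\cap B=\emptyset$.

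\textbf{Step 2 (balance relations).} For $w \in A$ with $w\ne v$, the $4$-cycle $v x_1 w y_1$ lies in $N(K)$, and $K$ together with it is a candidate template, so it is balanced. This gives
\[
\{f(vx_1), f(wy_1)\} = \{f(x_1w), f(y_1v)\}.
\]
For the whole bowtie we use the $6$-cycle $v\,x_1\,y_1\,w\,y_2\,x_2\,v$, which requires $w$ adjacent to $y_1$ and $y_2$. Its balance condition reads
\[
\{f(vx_1), f(y_1w), f(y_2x_2)\} = \{f(x_1y_1), f(wy_2), f(x_2v)\}.
\]
The orientation of the second triangle can be swapped, giving a second $6$-cycle $v\,x_1\,y_1\,w\,x_2\,y_2\,v$ whenever $w$ is also adjacent to $x_2$. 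Combining these multiset identities for suitable choices of $w$, and using the $4$-cycle relations to swap $f(x_1w)$ with $f(y_1w)$ when needed, should eliminate the $w$-edges and yield \eqref{eq:bowtie lemma}.

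\textbf{Step 3 (choosing $w$).} This needs $w$ adjacent to one endpoint in each triangle, say to $y_1$ and $y_2$. Any two vertices have at least $7$ common neighbours, so such $w$ exist, and we may choose $w \notin \{v,x_1,x_2\}$. Applying this to each of the four pairs $(x_1\text{ or }y_1,\ x_2\text{ or }y_2)$ gives four $6$-cycles, possibly with different $w$'s.

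\textbf{Main obstacle.} The difficulty is the multiset algebra: each $6$-cycle identity involves two unknown edge colours at $w$, and multiset equalities cannot be subtracted freely. To handle this, we first try to pick a single $w$ adjacent to three of the four endpoints, say $x_1, y_1, y_2$, so that the $4$-cycle $x_1 w y_1 v$ relation and the $6$-cycle relation share the colours $f(wx_1)$ and $f(wy_1)$. Such a $w$ should exist by counting: every vertex has degree more than $|N(K)|/2$, so some vertex sees three of four given vertices, unless there is a near-bipartite structure, which we would rule out using the $+3$ slack. With a shared $w$, a case analysis on whether $f(wy_1)=f(vx_1)$ or $f(wy_1)=f(y_1v)$ (from the $4$-cycle) reduces the $6$-cycle identity to \eqref{eq:bowtie lemma}. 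If that fails, we fall back to chaining two different $w$'s through the balanced $4$-cycle $w\,y_2\,w'\,y_1$.
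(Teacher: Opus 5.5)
Your setup is right: inside $N(K)$ any two vertices have at least $6$ common neighbours, and every $4$- or $6$-cycle there is balanced. But the argument never closes, and the combination your main line commits to cannot close. That line uses only the $4$-cycle $v x_1 w y_1 v$ and the $6$-cycle $v x_1 y_1 w y_2 x_2 v$. Neither cycle contains the edge $vy_2$, yet $f(vy_2)$ occurs in \eqref{eq:bowtie lemma}. So no case analysis on these two identities can yield the lemma. Concretely, colour every edge involved with colour $1$ except $f(vy_2)=2$: both identities hold while \eqref{eq:bowtie lemma} fails. Your other combinations have the same problem; for example, the two $6$-cycles through a $w$ adjacent to $y_1,x_2,y_2$ never contain $vy_1$. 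The existence of a $w$ seeing three of the four endpoints is true by a degree count, but it does not rescue the argument. The fallback via $w y_2 w' y_1$ is not specified.

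The missing idea is a cycle through $v$ that ties the colours at $w$ to $f(vy_1)$ and $f(vy_2)$. You also need the observation that multiset identities \emph{can} be subtracted. Identify a multiset of colours with its count vector in $\mathbb{Z}^q$; equal multisets are then equal vectors. Any identity obtained by adding and subtracting such identities that has nonnegative coefficients on both sides is again a multiset identity.

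With this, one auxiliary vertex suffices. Take $w \in N(K)$ adjacent to $y_1$ and $y_2$ with $w \notin \{v,x_1,x_2\}$. Your $6$-cycle gives $\{f(vx_1),f(y_1w),f(x_2y_2)\}=\{f(x_1y_1),f(wy_2),f(vx_2)\}$. The balanced $4$-cycle $w y_1 v y_2 w$ gives $\{f(wy_1),f(vy_2)\}=\{f(vy_1),f(wy_2)\}$. Subtracting the second from the first cancels $f(wy_1)$ and $f(wy_2)$ and leaves exactly \eqref{eq:bowtie lemma}.

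The paper does the same cancellation with two auxiliary vertices, $w_x\in N(x_1)\cap N(x_2)$ and $w_y\in N(y_1)\cap N(y_2)$. It uses the $4$-cycles $v x_1 w_x x_2 v$ and $v y_1 w_y y_2 v$ together with the $6$-cycle $w_x x_2 y_2 w_y y_1 x_1 w_x$, and carries out the elimination through a three-case analysis.
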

    \begin{proof}
    Let $G' := N[K]$. Inserting $t = r - 2$, $C = 3$ in Lemma \ref{lem: min degree in neighbourhood of K}, we get $\delta(G') \ge \frac{1}{2} \cdot \vert G' \vert + 3$. This implies that any two vertices in $G'$ have at least $6$ common neighbors (in $G'$). Hence, there exist distinct 
    $w_x, w_y \in V(G')$ such that $w_x, w_y \notin \{v, x_1, x_2, y_1, y_2\}$, $w_x$ is a common neighbor of $x_1,x_2$, and $w_y$ is a common neighbor of $y_1,y_2$.
    See Figure \ref{fig:bowtie_lemma_proof} for an illustration.
    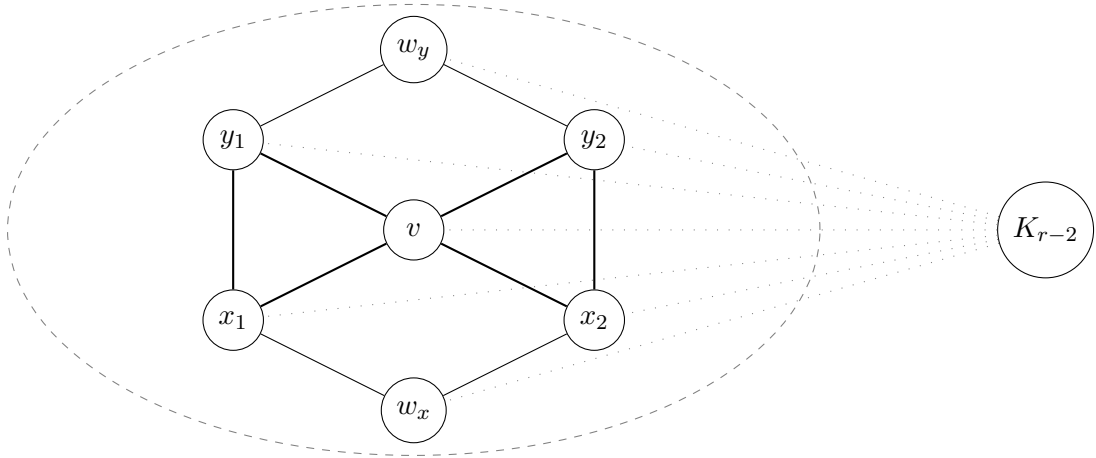
\begin{figure}[H]
        \centering
        \begin{tikzpicture}[scale=1.2, every node/.style={circle, draw, minimum size=0.8cm}]

            \node (y1) at (-2,2) {$y_1$};   
            \node (x1) at (-2,0) {$x_1$};
            \node (v) at (0,1) {$v$};
            \node (y2) at (2,2) {$y_2$};   
            \node (x2) at (2,0) {$x_2$};
            \node (wy) at (0,3) {$w_y$};
            \node (wx) at (0,-1) {$w_x$};
            \node (K) at (7,1) {$K_{r - 2}$};

            \draw[thick, black] (v) -- (y2);
            \draw[thick, black] (v) -- (x2);
            \draw[thick, black] (v) -- (x1);
            \draw[thick, black] (v) -- (y1);
            \draw[thick, black] (y1) -- (x1);
            \draw[thick, black] (y2) -- (x2);

            \draw (wy) -- (y1);
            \draw (wy) -- (y2);
            \draw (wx) -- (x1);
            \draw (wx) -- (x2);

            \draw[loosely dotted, gray] (K) -- (x2);
            \draw[loosely dotted, gray] (K) -- (x1);
            \draw[loosely dotted, gray] (K) -- (v);
            \draw[loosely dotted, gray] (K) -- (y1);
            \draw[loosely dotted, gray] (K) -- (y2);
            \draw[loosely dotted, gray] (K) -- (wx);
            \draw[loosely dotted, gray] (K) -- (wy);
            
            \draw[gray,dashed] (0, 1) ellipse (4.5cm and 2.5cm);
        \end{tikzpicture}
        \caption{The bowtie $(v, x_1y_1, x_2y_2)$ and vertices $w_x, w_y$}
        \label{fig:bowtie_lemma_proof}
    \end{figure}
    In what follows, we will repeatedly make use of the fact that every cycle of length 4 or 6 in $G' = G[N(K)]$ is balanced in the sense of Definition \ref{def: template} (because $G$ has no $K_r$-templates). We will use this for the 4-cycles $(v,x_1,w_x,x_2,v)$ and $(v,y_1,w_y,y_2,v)$, and the 6-cycle $(w_x,x_2,y_2,w_y,y_1,x_1,w_x)$. The fact that each of these cycles is balanced gives, respectively,
    \begin{equation}\label{eq:bowtie lemma balanced cycle 1}
    \{ f(vx_1),f(x_2w_x)\} = \{ f(vx_2),f(x_1w_x)\},
    \end{equation}
    \begin{equation}\label{eq:bowtie lemma balanced cycle 2}
    \{ f(vy_1),f(y_2w_y)\} = \{ f(vy_2),f(y_1w_y)\},
    \end{equation}
    and
    \begin{equation}\label{eq:bowtie lemma balanced cycle 3}
    \{ f(x_2w_x),f(y_2w_y),f(x_1y_1) \} = \{ f(x_1w_x), f(y_1w_y), f(x_2y_2) \}
    \end{equation}
    All equalities are as multisets. We now distinguish 3 cases:
    \begin{itemize}
        \item Suppose that $f(vx_1) = f(vx_2)$ and $f(vy_1) = f(vy_2)$. Then by 
        \eqref{eq:bowtie lemma balanced cycle 1} we have $f(x_1w_x) = f(x_2w_x)$ and by \eqref{eq:bowtie lemma balanced cycle 2} we have $f(y_1w_y) = f(y_2w_y)$. This allows us to cancel terms in \eqref{eq:bowtie lemma balanced cycle 3} and obtain
        $f(x_1y_1) = f(x_2y_2)$. Together with our assumptions in this case, this implies that \eqref{eq:bowtie lemma} holds. 
        \item Suppose that $f(vx_1) = f(vx_2)$ and $f(vy_1) \neq f(vy_2)$.
        Then \eqref{eq:bowtie lemma balanced cycle 1} gives
        $f(x_1w_x) = f(x_2w_x)$ while \eqref{eq:bowtie lemma balanced cycle 2} gives $f(vy_1) = f(y_1w_y)$ and $f(vy_2) = f(y_2w_y)$. Plugging these last two equalities into \eqref{eq:bowtie lemma balanced cycle 3} and canceling out $f(x_1w_x) = f(x_2w_x)$, we get
        $
        \{ f(vy_2), f(x_1y_1) \} = \{ f(vy_1), f(x_2y_2) \}
        $
        Together with the assumption $f(vx_1) = f(vx_2)$, this gives \eqref{eq:bowtie lemma} and establishes this case.
        Note that the case $f(vy_1) = f(vy_2)$ and $f(vx_1) \neq f(vx_2)$ is symmetric. 
        \item Finally, suppose that 
        $f(vx_1) \neq f(vx_2)$ and $f(vy_1) \neq f(vy_2)$. Then \eqref{eq:bowtie lemma balanced cycle 1} gives 
        $f(vx_1) = f(x_1w_x)$ and $f(vx_2) = f(x_2w_x)$, and similarly \eqref{eq:bowtie lemma balanced cycle 2} gives $f(vy_1) = f(y_1w_y)$ and $f(vy_2) = f(y_2w_y)$. Plugging these into \eqref{eq:bowtie lemma balanced cycle 3} gives \eqref{eq:bowtie lemma}.
    \end{itemize}
    \end{proof}

    \noindent
    The following lemma consists of some simple corollaries of Lemma \ref{lem: Bowtie fact}, which we record for future \nolinebreak use.
    
    \begin{lemma}\label{lem: Bowtie} Let $r \geq 3$ and let $(G, f)$ be an edge-colored graph satisfying $\delta(G) \ge \frac{r-1}{r}|G| + 3$ having no $K_r$-template. Let $K$ be a copy of $K_{r-2}$ in $G$, and let $(v, x_1y_1, x_2y_2)$ be a bowtie contained in $N(K)$. Then the following holds.
    \begin{enumerate}[(a)]
        \item If $f(vy_1), f(vx_1), f(y_1x_1)$ are all distinct, then 
        \[\{f(vx_1), f(vy_1)\}=\{f(vx_2), f(vy_2)\} \text{ and } f(x_1y_1) = f(x_2y_2).\]
        \item If $f(vx_1) = f(vy_1) \neq f(x_1y_1)$, then 
        \[f(vx_2) = f(vy_2) = f(vx_1) = f(vy_1) \text{ and } f(x_1y_1) = f(x_2y_2).\]
        \item If $S := \{f(vx_1), f(vy_1), f(x_1y_1)\}$ contains exactly two colors, then the color in $S$ with multiplicity 1 also belongs to $\{f(vx_2), f(vy_2), f(x_2y_2)\}$.
    \end{enumerate}
    \end{lemma}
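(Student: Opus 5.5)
All three parts follow from the multiset identity \eqref{eq:bowtie lemma} of Lemma \ref{lem: Bowtie fact}, applied twice: once to the bowtie $(v, x_1y_1, x_2y_2)$ as given and once with the two triangles swapped, i.e.\ to $(v, x_2y_2, x_1y_1)$. The swapped version gives nothing new, since \eqref{eq:bowtie lemma} is symmetric under swapping, so the content is the single identity
\[
\{f(x_1y_1), f(vx_2), f(vy_2)\} = \{f(x_2y_2), f(vx_1), f(vy_1)\}.
\]
In each part I will combine this with the assumed pattern of colors on the triangle $\{v,x_1,y_1\}$. The extra leverage comes from the fact that the \emph{same} bowtie also satisfies the identity with the roles of the triangles' ``apex edges'' interchanged. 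Concretely, I will also apply Lemma \ref{lem: Bowtie fact} to the bowtie $(v, x_2y_2, x_1y_1)$ after relabeling $x_i \leftrightarrow y_i$ where helpful, though this yields the same identity. So the real work is multiset bookkeeping. Write $a=f(vx_1)$, $b=f(vy_1)$, $c=f(x_1y_1)$ and $a'=f(vx_2)$, $b'=f(vy_2)$, $c'=f(x_2y_2)$. The identity reads $\{c,a',b'\}=\{c',a,b\}$.

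For (a), assume $a,b,c$ are distinct. Since $c\in\{c',a,b\}$ and $c\neq a,b$, we get $c=c'$. Cancelling $c=c'$ from both sides gives $\{a',b'\}=\{a,b\}$, as required.

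For (b), assume $a=b\neq c$. The right side is then $\{c',a,a\}$, so $a$ has multiplicity at least $2$ on the left side $\{c,a',b'\}$. Since $c\neq a$, this forces $a'=b'=a$. Cancelling leaves $c=c'$.

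For (c), assume $S$ has exactly two colors, say the singleton color is $s$ and the doubled color is $d$. The problem is that the single identity relates the multiset $\{a,b\}$ with $c'$ and $\{a',b'\}$ with $c$, rather than comparing $S$ with $S'$ directly. The case where $s=c$ (so $a=b=d$) is handled by (b), which gives $c'=c=s$.

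This leaves the case $c=d$, where say $a=s$ and $b=d=c$. The identity becomes $\{d,a',b'\}=\{c',s,d\}$, so $\{a',b'\}=\{c',s\}$. Hence $s\in\{a',b'\}\subseteq\{a',b',c'\}$, as required. The case $b=s$ is symmetric by swapping $x_1\leftrightarrow y_1$. There is no real obstacle here; the only care needed is in (c), where one must split according to which edge of the first triangle carries the singleton color.
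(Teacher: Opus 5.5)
Your proof is correct and matches the paper's argument: all three items come from multiset bookkeeping on the single identity \eqref{eq:bowtie lemma}, and (c) is split by whether the singleton color lies on $x_1y_1$ (reduced to (b)) or on one of $vx_1, vy_1$ (cancel the repeated color). The remark about re-applying Lemma \ref{lem: Bowtie fact} to the swapped bowtie adds nothing, as you note yourself, and can be deleted.
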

    \noindent
    Figure \ref{fig:bowtie_lemma} illustrates the patterns described by Lemma \ref{lem: Bowtie}. 
    \begin{figure}[H]
        \centering
        \begin{tikzpicture}[scale=1.2, every node/.style={circle, draw, minimum size = 0.8cm}]
            \node (y11) at (-1,1.75) {$y_1$};   
            \node (x11) at (-1,0.25) {$x_1$};
            \node (v1) at (0,1) {$v$};
            \node (y12) at (1,1.75) {$y_2$};   
            \node (x12) at (1,0.25) {$x_2$};
            \node (y21) at (2.5,1.75) {$y_1$};   
            \node (x21) at (2.5,0.25) {$x_1$};
            \node (v2) at (3.5,1) {$v$};
            \node (y22) at (4.5,1.75) {$y_2$};   
            \node (x22) at (4.5,0.25) {$x_2$};
            \node (y41) at (6,1.75) {$y_1$};   
            \node (x41) at (6,0.25) {$x_1$};
            \node (v4) at (7, 1) {$v$};
            \node (y42) at (8,1.75) {$y_2$};   
            \node (x42) at (8,0.25) {$x_2$};

            \node[draw=none, style = rectangle] at (0,2.2) {Item (a)};
            \node[draw=none, style = rectangle] at (3.5,2.2) {Item (b)};
            \node[draw=none, style = rectangle] at (7,2.2) {Item (c)};
            
            \draw[thick, red] (v1) -- (y12);
            \draw[thick, green] (v1) -- (x12);
            \draw[thick, blue] (y11) -- (x11);
            \draw[thick, green] (v1) -- (x11);
            \draw[thick, red] (v1) -- (y11);
            \draw[thick, blue] (y12) -- (x12);
            \draw[thick, red] (v2) -- (y22);
            \draw[thick, red] (v2) -- (x22);
            \draw[thick, red] (v2) -- (x21);
            \draw[thick, red] (v2) -- (y21);
            \draw[thick, blue] (y21) -- (x21);
            \draw[thick, blue] (y22) -- (x22);

            \draw[thick, blue] (v4) -- (y42);
            \draw[thick, green] (v4) -- (x42);
            \draw[thick, red] (v4) -- (x41);
            \draw[thick, green] (v4) -- (y41);
            \draw[thick, red] (y41) -- (x41);
            \draw[thick, blue] (y42) -- (x42);
        \end{tikzpicture}
        \caption{Bowties corresponding to Items (a)-(c) of Lemma \ref{lem: Bowtie}}
        \label{fig:bowtie_lemma}
    \end{figure}
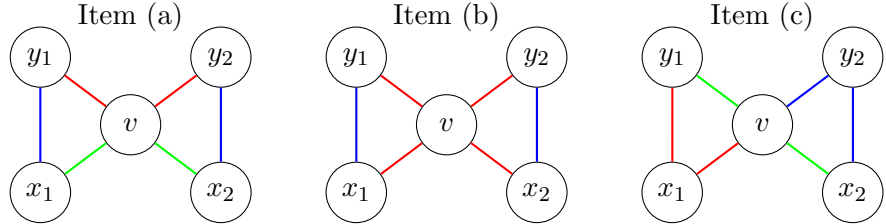

    \begin{proof}[Proof of Lemma \ref{lem: Bowtie}]
        For all items we use the equality \eqref{eq:bowtie lemma} from Lemma \ref{lem: Bowtie fact}.
        
        \noindent\textbf{Item (a).} Since $f(vx_1), f(vy_1) \neq f(x_1y_1)$ by assumption, the color $f(x_1y_1)$ on the LHS of \eqref{eq:bowtie lemma} must equal $f(x_2y_2)$ on the RHS. Canceling these terms yields $\{f(vx_1), f(vy_1)\} = \{f(vx_2), f(vy_2)\}$, as required.

        \noindent\textbf{Item (b).} Since $f(vx_1) = f(vy_1) \neq f(x_1y_1)$, we again get from \eqref{eq:bowtie lemma} that $f(x_1y_1) = f(x_2y_2)$. Canceling these terms yields $\{f(vx_2), f(vy_2)\} = \{f(vx_1), f(vy_1)\}$ (as multisets). Since the two elements on the RHS are the same, we conclude that $f(vx_2) = f(vy_2) = f(vx_1) = f(vy_1)$.
    
        \noindent\textbf{Item (c).} Let $c$ denote the color appearing exactly once in $S$. If $f(x_1y_1)=c$, then $f(vx_1) = f(vy_1) \neq c$, and the result follows from Item (b). Otherwise, 
        $c$ equals $f(vx_1)$ or $f(vy_1)$. Without loss of generality, $c = f(vx_1)$. Then
        $f(x_1y_1) = f(vy_1)$. Canceling these equal terms from opposite sides of \eqref{eq:bowtie lemma}, 
        we get $\{f(vx_2),f(vy_2)\} = \{f(x_2y_2),f(vx_1)\} = \{f(x_2y_2),c\}$.
        Hence, $c \in \{f(vx_2), f(vy_2)\}$.
    \end{proof}

	\subsection{Chains of $r$-cliques}\label{subsec:chains}
    In the proofs of Lemmas \ref{lem:main} and \ref{lem:aux_main} we will consider sequences of $r$-cliques such that each pair of consecutive $r$-cliques in the sequence shares many ($r-2$ or $r-1$) vertices. In some cases we will also require additional properties. In this section we establish the existence of these $r$-clique chains. We start with the following auxiliary lemma.
	\begin{lemma}\label{lem:chain aux}
    		Let $r \geq 2$ and let $G$ be a graph satisfying $\delta(G) > \frac{r-2}{r-1}|G|$. Let 
		$K_1 = \{x_1,\dots,x_{r-1}\}, K_2 = \{y_1,\dots,y_{r-1}\}$ be two $(r-1)$-cliques in $G$, and suppose that for every $1 \leq i \leq r-3$ and 
		$i+2 \leq j \leq r-1$, $y_ix_j \in E(G)$. Then there is a sequence $L_1,\dots,L_m$ of $(r-1)$-cliques such that $L_1 = K_1$, $L_m = K_2$, and $|L_i \cap L_{i+1}| \geq r-2$ for every $1 \leq i \leq m-1$.
	\end{lemma}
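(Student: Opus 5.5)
The plan is to morph $K_1$ into $K_2$ by replacing the $x$'s with the $y$'s one at a time, in the order $x_2 \to y_1$, $x_3 \to y_2$, and so on. Each swap is mediated by an auxiliary common neighbour. The only graph-theoretic input is the following count: any set $S$ of $s$ vertices satisfies
$$|N(S)| \geq |G| - s(|G|-\delta(G)) > |G| - \tfrac{s}{r-1}|G|.$$
Hence every set of at most $r-1$ vertices has a common neighbour, and such a neighbour lies outside the set automatically. The case $r=2$ is trivial: the cliques are single vertices and the chain $K_1,K_2$ already satisfies $|L_1 \cap L_2| \geq 0$. So assume $r \geq 3$.

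For $0 \leq t \leq r-2$, define
$$S_t := \{y_1,\dots,y_t\} \cup \{x_{t+2},\dots,x_{r-1}\},$$
a set of $r-2$ vertices. It is a clique: the $x$'s are pairwise adjacent, the $y$'s are pairwise adjacent, and for $i \leq t$ and $j \geq t+2 \geq i+2$ the hypothesis gives $y_ix_j \in E(G)$. The endpoints are $S_0 = K_1 \setminus \{x_1\}$ and $S_{r-2} = K_2 \setminus \{y_{r-1}\}$.

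For each $0 \leq t \leq r-3$, put $T_t := S_t \cap S_{t+1} = \{y_1,\dots,y_t\} \cup \{x_{t+3},\dots,x_{r-1}\}$, which has $r-3$ vertices. Then $S_t \cup S_{t+1} = T_t \cup \{x_{t+2}, y_{t+1}\}$ has at most $r-1$ vertices, so by the count above it has a common neighbour $w_t$. Both $S_t \cup \{w_t\}$ and $S_{t+1} \cup \{w_t\}$ are $(r-1)$-cliques. Their intersection is $T_t \cup \{w_t\}$, of size $r-2$. Note that we never need $x_{t+2}y_{t+1}$ to be an edge.

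The chain is assembled as
$$K_1 = S_0\cup\{x_1\},\ S_0\cup\{w_0\},\ S_1\cup\{w_0\},\ S_1\cup\{w_1\},\ S_2\cup\{w_1\},\ \dots,\ S_{r-2}\cup\{w_{r-3}\},\ S_{r-2}\cup\{y_{r-1}\} = K_2.$$
Consecutive members either share $S_t$ (size $r-2$) or share $T_t\cup\{w_t\}$ (size $r-2$), and every member is an $(r-1)$-clique.

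The main point to verify is the index bookkeeping: each $S_t$ is indeed a clique, which is exactly where the condition $j \geq i+2$ is used. The choice of $S_t$ deliberately skips $x_{t+1}$ so that the non-guaranteed edge $y_tx_{t+1}$ is never needed. Degenerate cases only simplify the chain: for $r=3$ the hypothesis is vacuous and the $S_t$ are singletons, and the chain may contain repeated cliques, which can simply be deleted.
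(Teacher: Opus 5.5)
Your proof is correct, and it is essentially the paper's argument written out explicitly. If you unroll the paper's induction (pick $z$ adjacent to $y_1,\dots,y_{r-2},x_{r-1}$, then recurse inside $N(x_{r-1})$), you get exactly your chain $K_1, S_0\cup\{w_0\}, S_1\cup\{w_0\},\dots,S_{r-2}\cup\{w_{r-3}\},K_2$, with the paper's $z$ at recursion depth $d$ equal to your $w_{r-3-d}$. Your explicit form replaces the induction and the appeal to Lemma~\ref{lem:min degree in neighbourhood} by the single fact that any $r-1$ vertices of $G$ have a common neighbour.

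Two small precisions. First, the hypothesis $y_ix_j\in E(G)$ also forces $y_i\neq x_j$, which is what guarantees that each $S_t$ has exactly $r-2$ vertices. Second, $|T_t|=r-3$ fails when $y_{t+1}=x_{t+2}$; in that case $S_t=S_{t+1}$. This is harmless, since only $|L_i\cap L_{i+1}|\ge r-2$ is required.
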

	\begin{proof}
		We will frequently use the fact that since $\delta(G) > \frac{r-2}{r-1}|G|$, every $r-1$ vertices in $G$ have a common neighbor. 
		
		The proof is by induction on $r$. For the base case $r=2$ we can simply take $m=2$ and 
        $L_1 := K_1 = \{x_1\}$, $L_2 := K_2 = \{y_1\}$. 
		Suppose now that $r \geq 3$. Let $z$ be a vertex adjacent to $y_1,\dots,y_{r-2}$ and $x_{r-1}$. We have the $(r-1)$-cliques 
        $M := \{y_1,\dots,y_{r-2},z\}$ and 
		$N := \{x_{r-1},y_1,\dots,y_{r-3},z\}$ (here we use that $x_{r-1}$ is adjacent to $y_1,\dots,y_{r-3}$ by the assumptions of the lemma). 
		Note that $|N \cap M|, |M \cap K_2| \geq r-2$. 
		Now, consider the two $(r-2)$-cliques $K'_1 := K_1 \setminus \{x_{r-1}\} = \{x_1,\dots,x_{r-2}\}$ and $K'_2 := N \setminus \{x_{r-1}\} = \{y_1,\dots,y_{r-3},z\}$. We have 
		$K'_1,K'_2 \subseteq N(x_{r-1})$. By Lemma \ref{lem:min degree in neighbourhood}, the graph $G' := G[N(x_{r-1})]$ satisfies 
        $\delta(G') > \frac{r-3}{r-2}|G'|$. We would like to apply the induction hypothesis to the $(r-2)$-cliques $K'_1,K'_2$ in the graph $G'$. This is possible because for every $1 \leq i \leq r-4$ and $i+2 \leq j \leq r-2$, $y_ix_j \in E(G')$. By the induction hypothesis, there is a sequence of $(r-2)$-cliques $L'_1,\dots,L'_m \subseteq N(x_{r-1})$ such that $L'_1 = K'_1$, $L'_m = K'_2$, and $|L'_i \cap L'_{i+1}| \geq r-3$ for every $1 \leq i \leq m-1$. Let $L_i := L'_i \cup \{x_{r-1}\}$ for $i \in [m]$. Then $L_1,\dots,L_m$ are $(r-1)$-cliques and $|L_i \cap L_{i+1}| \geq r-2$ for every $1 \leq i \leq m-1$. Also, $L_1 = K_1$ and 
        $L_m = K'_2 \cup \{x_{r-1}\} = N$. Now $K_1 = L_1,\dots,L_m = N, M, K_2$ is a sequence of $(r-1)$-cliques as required by the lemma. 
	\end{proof}

    Figure \ref{fig:clique chain} depicts the proof of Lemma \ref{lem:chain aux} for $r = 4$. The vertex $z$ is explicitly chosen to form the intermediate $3$-cliques, while $w$ arises from applying the induction hypothesis within $N(x_3)$ to connect the cliques $\{x_1,x_2\},\{y_1,z\}$. The sequentially numbered $3$-cliques denote the desired chain.
    \begin{figure}
        \centering
        \begin{tikzpicture}[scale=1.2, every node/.style={circle, draw, minimum size=0.8cm, thick}]
            \node (x1) at (1,{sqrt(3)}) {$x_1$};
            \node (x2) at (0,0) {$x_2$};
            \node (x3) at (2,0) {$x_3$};
            
            \node (y1) at (4,0) {$y_1$};
            \node (y2) at (5, {sqrt(3)}) {$y_2$};
            \node (y3) at (6, 0) {$y_3$};
    
            \node (w) at (2, 2.5) {$w$};
    
            \node (z) at (3, {sqrt(3)}) {$z$};
            
            \draw[thick] (x1) to (x2);
            \draw[thick] (x2) to (x3);
            \draw[thick] (x3) to (x1);
            \node[draw=none, inner sep=1pt] at (1,0.577) {1};
    
            \draw[thick, blue] (w) to (x1);
            \draw[thick, blue] (w) to (x3);
            \node[draw=none, inner sep=1pt] at (1.67, 1.41) {2};
    
            \draw[thick, blue] (w) to (z);
            \draw[thick, red] (z) to (x3);
            \node[draw=none, inner sep=1pt] at (2.33, 1.41) {3};
    
            \draw[thick] (x3) to (y1);
            \draw[thick, red] (z) to (y1);
            \node[draw=none, inner sep=1pt] at (3, 0.577) {4};
    
            \draw[thick] (y1) to (y2);
            \draw[thick, red] (z) to (y2);
            \node[draw=none, inner sep=1pt] at (4, 1.15) {5};
    
            \draw[thick] (y2) to (y3);
            \draw[thick] (y3) to (y1);
            \node[draw=none, inner sep=1pt] at (5, 0.577) {6};
        \end{tikzpicture}
        \caption{Chaining two $3$-cliques (illustration for the proof of Lemma \ref{lem:chain aux}).}
        \label{fig:clique chain}
    \end{figure}
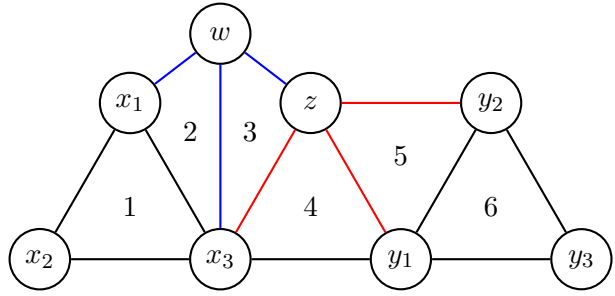
    
    In the following lemma, we obtain a chain of $r$-cliques connecting a given $r$-clique $L$ to a given edge $uw$. We can also guarantee that a given vertex $x \in L$ belongs to almost all $r$-cliques in the chain. This will be important later on.
	\begin{lemma}\label{lem:chain}
		Let $r \geq 2$ and let $G$ be a graph satisfying $\delta(G) > \frac{r-1}{r}|G|$.  
		Let $L \subseteq V(G)$ be an $r$-clique, let $x \in L$, and let $uw \in E(G)$. Then there is a sequence of $r$-cliques $L_1,\dots,L_m$ such that $L_1 = L$, $u,w \in L_m$, $x \in L_1,\dots,L_{m-2}$, and $|L_i \cap L_{i+1}| \geq r-1$ for every $1 \leq i \leq m-1$.  
	\end{lemma}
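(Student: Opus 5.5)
The plan is to reduce to Lemma \ref{lem:chain aux} applied inside the neighborhood of $x$, and then append two final cliques that contain $u$ and $w$.

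\textbf{Step 1: set up in $G' := G[N(x)]$.} By Lemma \ref{lem:min degree in neighbourhood} (with $C=0$, and with the strict inequality carried through), $\delta(G') > \frac{r-2}{r-1}|G'|$. In particular, every $r-1$ vertices of $G'$ have a common neighbor in $G'$, and every $r$ vertices of $G$ have a common neighbor in $G$.

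\textbf{Step 2: build the last two cliques.} The cliques containing $x$ are exactly $\{x\} \cup L'$ for $(r-1)$-cliques $L' \subseteq N(x)$. I will construct:
\begin{itemize}
\item an $r$-clique $L_m \ni u,w$;
\item an $r$-clique $L_{m-1}$ sharing $r-1$ vertices with $L_m$;
\item an $r$-clique $L_{m-2} \ni x$ sharing $r-1$ vertices with $L_{m-1}$.
\end{itemize}
Greedily extend $\{u,w\}$ to an $r$-clique $\{u,w,z_3,\dots,z_r\}$; this is possible since any $j<r$ vertices have a common neighbor. To arrange that $x$ can be joined, first pick $z_3,\dots,z_{r-1}$ inside $N(x)\cap N(u)\cap N(w)$. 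Such vertices exist because at most $r-1$ vertices are involved at each step, so the common neighborhood of $x$, $u$, $w$ and the already chosen $z_i$ is nonempty. Let $S:=\{u,w,z_3,\dots,z_{r-1}\}$, so $|S|=r-1$.

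If $u,w \in N(x)$, then $S\subseteq N(x)$ and $L_{m-1}=L_m=S\cup\{x\}$ works directly. In general, $u$ or $w$ may not be adjacent to $x$, so we pass through one intermediate clique:
\begin{itemize}
\item Pick $z_r$ a common neighbor of $S$, and set $L_m=S\cup\{z_r\}$.
\item Pick $y$ a common neighbor of $\{w,z_3,\dots,z_r\}$ that is also adjacent to $x$, and set $L_{m-1}=\{y,w,z_3,\dots,z_r\}$.
\item Set $L_{m-2}=\{x,y,z_3,\dots,z_{r}\}$.
\end{itemize}
This needs $z_r\in N(x)$ as well, and $w$ is not in $L_{m-2}$. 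The delicate point is choosing $z_r$ and $y$ inside $N(x)$ while keeping cliques. Every $r$ vertices have a common neighbor, so $z_r$ adjacent to $S$ cannot also be forced adjacent to $x$ (that would be $r$ constraints plus $x$). I would therefore instead take $z_r$ adjacent to $\{w,z_3,\dots,z_{r-1},x\}$ ($r-1$ constraints plus $u$: too many). The fix is to drop $u$ from this clique, as follows. Let $L_{m-2}:=\{x,w',z_3,\dots,z_{r-1},t\}$-type cliques, and count constraints carefully. With $r$ vertices always having a common neighbor, each clique in the tail is obtained by swapping one vertex. I expect this bookkeeping, making sure the last two cliques may omit $x$ while every swap stays within the $r$-vertex common-neighbor budget, to be the main obstacle. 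I would resolve it by choosing $z_3,\dots,z_{r-1}\in N(x)\cap N(u)\cap N(w)$ first (at most $r$ constraints each), then
\[
L_{m-2}=\{x,w,z_3,\dots,z_{r-1},p\}\ \text{if } w\in N(x),
\]
and otherwise inserting a vertex adjacent to $x$ and to $z_3,\dots,z_{r-1}$ and $w$ as the swap.

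\textbf{Step 3: connect $L$ to $L_{m-2}$.} Both contain $x$, so write $L=\{x\}\cup K_1$ and $L_{m-2}=\{x\}\cup K_2$ with $K_1,K_2$ being $(r-1)$-cliques in $G'$. To meet the cross-adjacency hypothesis of Lemma \ref{lem:chain aux}, insert an intermediate $(r-1)$-clique $K_3$ built greedily in $G'$: choose its $i$-th vertex adjacent to the needed vertices of $K_1$ and to the earlier vertices of $K_3$. This involves at most $r-2$ constraints in $G'$, which is allowed. Then apply Lemma \ref{lem:chain aux} to the pairs $(K_1,K_3)$ and $(K_3,K_2)$, ordering the vertices so that the required edges $y_ix_j$ hold, or choose $K_3$ so that all cross edges to both cliques exist where needed. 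Adding $x$ to every clique of the resulting chain gives $r$-cliques containing $x$ with consecutive intersections of size $\ge r-1$. Concatenating with the tail from Step 2 gives the required sequence.
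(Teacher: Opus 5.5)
\textbf{Verdict: genuine gap in Step 3.} Your overall plan matches the paper's: work in $G[N(x)]$, invoke Lemma~\ref{lem:chain aux} there, and append two cliques ending at $uw$.

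Step~2 is fine once the counting is corrected. Your first, abandoned attempt already works: $S\cup\{x\}$ and $\{w,z_3,\dots,z_r\}\cup\{x\}$ each have exactly $r$ vertices, not $r+1$, so $z_r$ and $y$ exist.

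Lemma~\ref{lem:chain aux} requires the cross-adjacencies $y_ix_j\in E(G)$ for $1\le i\le r-3$, $i+2\le j\le r-1$. Your $K_2=L_{m-2}\setminus\{x\}$ is built with no reference to $K_1=L\setminus\{x\}$. The greedy $K_3$ only handles the pair $(K_1,K_3)$; your count of ``at most $r-2$ constraints'' covers that side alone. The pair $(K_3,K_2)$ is not handled:
\begin{itemize}
\item Reordering vertices cannot create edges that are absent. Nothing in your construction forces any edge between $K_3$ and $K_2$, while for $r\ge 4$ the hypothesis needs at least one.
\item Choosing $K_3$ to meet the cross conditions toward both cliques is asserted without a count, and the natural count fails. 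The two cross conditions impose $2\binom{r-2}{2}$ prescribed adjacencies on $K_3$, on top of its $\binom{r-1}{2}$ internal ones. The guarantee that every $r-1$ vertices of $G[N(x)]$ have a common neighbor pays for at most $r-1$ constraints per vertex of $K_3$, i.e.\ $(r-1)^2$ in total. Already for $r\ge 8$ we have $\binom{r-1}{2}+2\binom{r-2}{2}>(r-1)^2$.
\end{itemize}

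The missing idea is to make the tail itself satisfy the cross condition with $L$, using the full budget of $r$ common-neighbor constraints available in $G$ rather than $r-1$ in $G[N(x)]$.
\begin{itemize}
\item Write $L=\{x_1,\dots,x_r\}$ with $x_r=x$.
\item For $1\le i\le r-2$, choose $y_i$ as a common neighbor of $u,w,y_1,\dots,y_{i-1},x_{i+2},\dots,x_r$. This is exactly $r$ vertices.
\item Adjacency to $x$ then comes for free as one of the cross constraints. Moreover, $M=\{y_1,\dots,y_{r-2},u,w\}$ is an $r$-clique, and $y_ix_j\in E(G)$ whenever $j\ge i+2$.
\item Take $y_{r-1}$ adjacent to $u,y_1,\dots,y_{r-2},x$. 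This gives the $r$-cliques $N=\{y_1,\dots,y_{r-1},u\}$ and $P=\{x,y_1,\dots,y_{r-1}\}$.
\item Lemma~\ref{lem:chain aux} now applies directly in $G[N(x)]$ to $L\setminus\{x\}$ and $P\setminus\{x\}$, with no intermediate clique needed. The required sequence is $L,\dots,P,N,M$.
\end{itemize}
Alternatively, you could prove that any two $(r-1)$-cliques in $G[N(x)]$ are joined by such a chain, by induction through the neighborhood of a vertex of the target clique. That would be a separate lemma, which your proposal neither states nor proves.
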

	\begin{proof}
		Similarly to before, we use the fact that any $r$ vertices have a common neighbor (since $\delta(G) > \frac{r-1}{r}|G|$).
		Write $L = \{x_1,\dots,x_r\}$ where $x_r = x$. 
		We define a sequence of vertices $y_1,\dots,y_{r-2}$ by induction as follows. Let $y_1$ be a common neighbor of $u,w,x_3,\dots,x_r$. Inductively, having chosen $y_1,\dots,y_{i-1}$, let $y_i$ be a common neighbor of $u,w,y_1,\dots,y_{i-1},x_{i+2},\dots,x_r$ (these are exactly $r$ vertices). This choice guarantees that $M := \{y_1,\dots,y_{r-2},u,w\}$ is an $r$-clique and that for every $1 \leq i \leq r-3$ and $i+2 \leq j \leq r-1$, $y_ix_j \in E(G)$ (note that this is exactly the condition in Lemma \ref{lem:chain aux}). Now, let $y_{r-1}$ be a common neighbor of $u,y_1,\dots,y_{r-2},x_r$. Then $N := \{y_1,\dots,y_{r-1},u\}$ and $P := \{x_r,y_1,\dots,y_{r-1}\}$ are $r$-cliques and $|P \cap N|, |N \cap M| \geq r-1$.
		
		We now apply Lemma \ref{lem:chain aux} to the $(r-1)$-cliques $K_1 := \{x_1,\dots,x_{r-1}\} = L \setminus \{x_r\}$ and 
        $K_2 := \{y_1,\dots,y_{r-1}\} = P \setminus \{x_r\}$ in the graph $G' := G[N(x_r)]$, noting that $K_1,K_2 \subseteq N(x_r)$. By Lemma \ref{lem:min degree in neighbourhood}, we have 
		$\delta(G') > \frac{r-2}{r-1}|G'|$. We already saw that the other condition of Lemma \ref{lem:chain aux} holds. Hence, by that lemma, $G'$ contains a sequence of $(r-1)$-cliques $L'_1,\dots,L'_k$ with $L'_1 = K_1$, $L'_k = K_2$, and $|L'_i \cap L'_{i+1}| \geq r-2$ for every $1 \leq i \leq k-1$. Let $L_i = L'_i \cup \{x_r\}$ for $1 \leq i \leq k$. Then $L_1,\dots,L_k$ are $r$-cliques, $L_1 = \{x_1,\dots,x_r\} = L$, $L_k = \{x_r,y_1,\dots,y_{r-1}\} = P$, $x = x_r \in L_1,\dots,L_k$, and $|L_i \cap L_{i+1}| \geq r-1$ for every $1 \leq i \leq k-1$. Now set $m := k+2$ and take $L_{m-1} := N$ and $L_m := M$. This gives the required sequence $L_1,\dots,L_m$.     
	\end{proof}
    
    Figure \ref{fig:clique to edge} illustrates the proof of Lemma \ref{lem:chain} for $r = 4$, connecting the initial $4$-clique $L = \{x_1, x_2, x_3, x_4\}$ to the target edge $uw$. Following the proof, we iteratively pick common neighbors $y_1, y_2$ (red edges) and $y_3$ (blue edges). Observe that $\{x_1, x_2, x_3\}$ and $\{y_1, y_2, y_3\}$ form $3$-cliques within the neighborhood $N(x_4)$. Because the edge $x_3y_1$ is present, the conditions of Lemma \ref{lem:chain aux} are satisfied, yielding a chain of $3$-cliques connecting $\{x_1,x_2,x_3\}$ and $\{y_1,y_2,y_3\}$ within $G[N(x_4)]$ (this chain is not depicted in the picture, but the reader is referred to Figure \ref{fig:clique chain}). Extending each $3$-clique in this chain by the vertex $x_4$ produces a chain of $4$-cliques from $L$ to $P = \{x_4, y_1, y_2, y_3\}$. The final sequence is obtained by appending $N = \{u, y_1, y_2, y_3\}$ and $M = \{u, w, y_1, y_2\}$.
    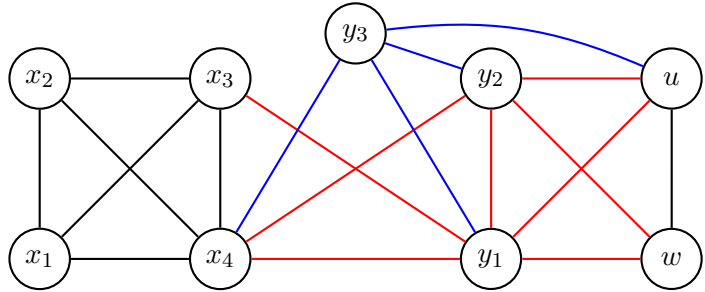
\begin{figure}[H]
        \centering
        \begin{tikzpicture}[scale=1.2, every node/.style={circle, draw, minimum size=0.8cm, thick}]
                \node (x1) at (0, 0) {$x_1$};
                \node (x2) at (0, 2) {$x_2$};
                \node (x3) at (2, 2) {$x_3$};
                \node (x4) at (2, 0) {$x_4$};
                
                \draw[thick] (x1) to (x2);
                \draw[thick] (x2) to (x3);
                \draw[thick] (x3) to (x1);
                \draw[thick] (x4) to (x3);
                \draw[thick] (x4) to (x2);
                \draw[thick] (x4) to (x1);

                \node (u) at (7, 2) {$u$};
                \node (w) at (7, 0) {$w$};

                \draw[thick] (u) to (w); 
                
                \node (y1) at (5, 0) {$y_1$};
                \node (y2) at (5, 2) {$y_2$};
                \node (y3) at (3.5, 2.5) {$y_3$};

                \draw[thick, red] (y1) to (u);
                \draw[thick, red] (y1) to (w);
                \draw[thick, red] (y1) to (x3);
                \draw[thick, red] (y1) to (x4);

                \draw[thick, red] (y2) to (u);
                \draw[thick, red] (y2) to (w);
                \draw[thick, red] (y2) to (y1);
                \draw[thick, red] (y2) to (x4);

                \draw[thick, blue, bend left=15] (y3) to (u);
                \draw[thick, blue] (y3) to (y1);
                \draw[thick, blue] (y3) to (y2);
                \draw[thick, blue] (y3) to (x4);
            \end{tikzpicture}
        \caption{Chaining a $4$-clique to an edge (illustration for the proof of Lemma \ref{lem:chain}).}
        \label{fig:clique to edge}
    \end{figure}
    
	Finally, we apply Lemma \ref{lem:chain} in the neighborhood of a vertex $v$ to obtain the following lemma. This is the lemma we will use in subsequent sections.
	\begin{lemma}[Chain lemma]\label{lem:chain main}
		Let $r \geq 3$, let $G$ be a graph satisfying $\delta(G) \geq \frac{r-1}{r}|G| + 2$, let $v \in V(G)$, let $K \subseteq N(v)$ be an $r$-clique, let $x \in K$ and let $uw \in E(G)$ with $u,w \in N(v)$. Then there is a sequence of $r$-cliques $K_1,\dots,K_m \subseteq N(v)$ such that $K_1 = K$, $u,w \in K_m$, $x \in K_1,\dots,K_{m-2}$, and $|K_i \cap K_{i+1}| = r-2$ for every $1 \leq i \leq m-1$.\footnote{We note that the requirement that $|K_i \cap K_{i+1}| = r-2$ (instead of $|K_i \cap K_{i+1}| \geq r-2$) is somewhat artificial, but it will be convenient for us that when applying Lemma \ref{lem:chain main}, we do not need to handle the case $|K_i \cap K_{i+1}| = r-1$.}
	\end{lemma}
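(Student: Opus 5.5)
The plan is to reduce to Lemma \ref{lem:chain} applied inside $G' := G[N(v)]$ with $r-1$ in place of $r$. This produces a chain of $(r-1)$-cliques, and we then extend each of them by one vertex to get $r$-cliques. By Lemma \ref{lem:min degree in neighbourhood} with $C=2$, we have $\delta(G') \geq \frac{r-2}{r-1}|G'| + 2 > \frac{r-2}{r-1}|G'|$, so the hypothesis of Lemma \ref{lem:chain} (for $r-1 \geq 2$) holds in $G'$. Moreover, any $r-1$ vertices of $G'$ have at least $|G'| - (r-1)\bigl(\frac{|G'|}{r-1} - 2\bigr) = 2(r-1) \geq 4$ common neighbours in $G'$. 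This slack is what will let us choose extension vertices that avoid a few forbidden vertices.

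The steps, in order, are as follows.
\begin{enumerate}
\item Fix $y \in K \setminus \{x\}$ and put $L := K \setminus \{y\}$, an $(r-1)$-clique in $G'$ containing $x$.
\item Apply Lemma \ref{lem:chain} in $G'$ to $L$, $x$ and the edge $uw$. This gives $(r-1)$-cliques $L_1=L,\dots,L_{m'}$ in $N(v)$ with $u,w \in L_{m'}$, $x \in L_1,\dots,L_{m'-2}$, and $|L_i \cap L_{i+1}| \geq r-2$.
\item Delete consecutive repetitions. This preserves all the stated properties, so we may assume $|L_i \cap L_{i+1}| = r-2$ exactly. Write $L_i = S_i \cup \{a_i\}$ and $L_{i+1} = S_i \cup \{b_i\}$, where $S_i := L_i \cap L_{i+1}$.
\item Choose $z_1 := y$. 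For $i \geq 2$, choose $z_i$ greedily as a common neighbour in $G'$ of $L_i$ avoiding $L_{i-1} \cup L_{i+1} \cup \{z_{i-1}\}$. Outside $L_i$ this forbids at most $3$ vertices, and we have at least $4$ candidates.
\item Set $K_i := L_i \cup \{z_i\}$, which is an $r$-clique in $N(v)$. Then $K_i \cap K_{i+1} = S_i$ has size exactly $r-2$, since $z_i \notin L_{i+1}$, $z_{i+1} \notin L_i$ and $z_i \neq z_{i+1}$. Also $K_1 = K$, $u,w \in K_{m'}$, and $x \in K_i$ for all $i \leq m'-2$.
\end{enumerate}

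The main obstacle is the first step of the chain, because $z_1 = y$ is forced and we cannot ensure $y \notin L_2$ (nor $y \neq z_2$, though $z_2$ is chosen to avoid $z_1$). If $y \in L_2$, then $b_1 = y$ and $K_1 \cap K_2 = S_1 \cup \{y\}$ has size $r-1$. My fix is to insert an intermediate $r$-clique $T := S_1 \cup \{p,q\}$ between $K_1$ and $K_2$, where $pq$ is an edge in the common neighbourhood $N(S_1 \cup \{v\})$ with $p,q \notin \{a_1,y,z_2\}$.

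To find $pq$, apply Lemma \ref{lem: min degree in neighbourhood of K} to the $(r-1)$-clique $S_1 \cup \{v\}$. It shows that this common neighbourhood induces a graph of minimum degree more than half its size plus $2$, so it certainly contains such an edge avoiding $3$ vertices. Then $K_1 \cap T = S_1$ and $T \cap K_2 = S_1$, both of size $r-2$. Furthermore $x \in S_1$ whenever $x \in L_2$, and $x \in L_2$ holds as long as $m' \geq 4$. The degenerate short cases ($m' \leq 3$) need a brief separate check: there the requirement on $x$ is vacuous for the cliques after $K_1$, and the same insertion trick handles any intersection of size $r-1$. Finally, I would recheck that inserting $T$ does not break the index condition $x \in K_1,\dots,K_{m-2}$.
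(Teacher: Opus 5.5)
Your overall strategy is the paper's: apply Lemma \ref{lem:chain} inside $G[N(v)]$ with $r-1$ in place of $r$, strip repeats, and extend each $L_i$ greedily by a vertex $z_i$ avoiding the neighbouring cliques. The difference is how you handle the possibility that $z_1=y$ lies in $L_2$, and your patch has a hole there.

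For $m'=3$ you claim that the requirement on $x$ is vacuous after $K_1$. But once $T$ is inserted, the chain is $K_1,T,K_2,K_3$, which has length $m=4$, so $x\in T$ is required. Suppose $y\in L_2$ and $x\notin L_2$. Then $x=a_1$ and $L_2=K\setminus\{x\}$, and $T=S_1\cup\{p,q\}$ with $p,q\neq a_1$ does not contain $x$. Nothing in the guarantees of Lemma \ref{lem:chain} rules this out; for instance, it can occur when your $y$ equals $u\in K$, which your choice of $y$ allows. So the deferred ``recheck'' is exactly where the construction breaks.

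The paper avoids the collision altogether. If $u,w\in K$, take $m=1$. Otherwise choose $y\in K\setminus\{x,u,w\}$ and apply Lemma \ref{lem:chain} in $G[N(v)]-y$. This graph still has minimum degree at least $\frac{r-2}{r-1}|N(v)|+1>\frac{r-2}{r-1}\bigl(|N(v)|-1\bigr)$, thanks to the $+2$. Then $y$ lies in no $L_i$, and the greedy choice $z_i\notin K_{i-1}\cup L_{i+1}$ gives $|K_i\cap K_{i+1}|=r-2$ throughout, with no insertion needed. Alternatively, you can patch your bad subcase directly: there $L_2\subseteq K$, so you can jump straight from $K$ to $L_3\cup\{z\}$ with $z\notin K$, or stop at $m=1$ if $L_3\subseteq K$.

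There is also a smaller slip. Lemma \ref{lem: min degree in neighbourhood of K} applied to the $(r-1)$-clique $S_1\cup\{v\}$ gives only minimum degree at least $2$, since the coefficient $\frac{r-t-1}{r-t}$ vanishes for $t=r-1$; it does not give ``half plus $2$''. The edge $pq$ does exist, but the justification is your own count that any $r-1$ vertices of $G[N(v)]$ have at least $2(r-1)\geq 4$ common neighbours: pick $p$, then pick $q$ adjacent to $S_1\cup\{p\}$.
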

    \begin{proof}
        If $u,w \in K$ then the assertion is trivial (by picking $m := 1$ and $K_1 := K$).
        So assume that $|K \cap \{u,w\}| \leq 1$, pick an arbitrary $y \in K \setminus \{x,u,w\}$, and let 
        $G' := G[N(v)] - y$. 
		We will apply Lemma \ref{lem:chain} in the graph $G'$ with parameter $r-1$ in place of $r$. 
        By Lemma \ref{lem:min degree in neighbourhood} we have $\delta(G[N(v)]) \geq \frac{r-2}{r-1}|N(v)| + 2$, and so $\delta(G') \geq \frac{r-2}{r-1}|N(v)| + 1 > \frac{r-2}{r-1}|G'|$. Thus, we may indeed apply Lemma \ref{lem:chain}.
        That lemma, with input $L := K \setminus \{y\}$, gives a sequence of $(r-1)$-cliques $L_1,\dots,L_m \subseteq V(G') \subseteq N(v)$ with $L_1 = L$, $u,w \in L_m$, $x \in L_1,\dots,L_{m-2}$, and $|L_i \cap L_{i+1}| \geq r-2$ for every $1 \leq i \leq m-1$. We may assume that 
		$|L_i \cap L_{i+1}| = r-2$ for all $i$, by omitting elements in the sequence $L_1,\dots,L_m$, if necessary (because if $|L_i \cap L_{i+1}| > r-2$ then $L_i = L_{i+1}$). 
		
		Next, we define $r$-cliques $K_1,\dots,K_m \subseteq N(v)$ as follows: First, $K_1 := L_1 \cup \{y\} = K$. 
        For $i=2,\dots,m$ in order, choose a vertex $z_i$ which is a common neighbor of $L_i \cup \{v\}$ and such that $z_i \notin K_{i-1} \cup L_{i+1}$, and put $K_i := L_i \cup \{z_i\}$. Such a vertex $z_i$ exists because the condition $\delta(G) \geq  \frac{r-1}{r}|G| + 2$ guarantees that any $r$ vertices have at least $2r$ common neighbors, while $|K_{i-1} \cup L_{i+1}| \leq 2r-1$. 
        This choice of $z_i$, and the fact that $y \notin L_1 \cup \dots \cup L_m$ by construction, guarantee that $|K_i \cap K_{i+1}| = |L_i \cap L_{i+1}| = r-2$ for all $1 \leq i \leq m-1$.
        Now we have a sequence of $r$-cliques $K_1,\dots,K_m \subseteq N(v)$ such that $K_1 = K$, $u,w \in K_m$, $x \in K_1,\dots,K_{m-2}$, and $|K_i \cap K_{i+1}| = r-2$ for every $1 \leq i \leq m-1$. 
	\end{proof}

	\subsection{Transferring colors}\label{subsec:transfering colors}
    Throughout this section, $r \geq 3$.
    In this section we further study the structure of edge-colored dense graphs with no $K_r$-templates. We show that given two $(r+1)$-cliques $M_1,M_2$ intersecting in $r-1$ vertices, in many cases one can deduce information on the colors in $M_2$ from the colors in $M_1$ (to this end we will use Lemmas \ref{lem: Bowtie fact} and \ref{lem: Bowtie}). It will be convenient to fix a vertex $v \in M_1 \cup M_2$ and to consider $K_i := M_i \setminus \{v\}$, so that $K_1,K_2 \subseteq N(v)$ are $r$-cliques and $|K_1 \cap K_2| = r-2$. Formally, our setting for Section \ref{subsec:transfering colors} is as follows:  
	
	\begin{setting}\label{setting}
	We are given a vertex $v \in V(G)$ and two $r$-cliques $K_1,K_2 \subseteq N(v)$.\footnote{So $K_i \cup \{v\}$ is an $(r+1)$-clique for $i=1,2$.} We assume that $|K_1 \cap K_2| = r-2$, and write $K_1 = \{ x_1,y_1,z_1,\dots,z_{r-2} \}$ and $K_2 = \{x_2,y_2,z_1,\dots,z_{r-2}\}$. 
    Finally, we assume that $\delta(G) \geq \frac{r-1}{r}n + 3$ and that $G$ has no $K_r$-template, enabling us to apply Lemmas \ref{lem: Bowtie fact} \nolinebreak and \nolinebreak \ref{lem: Bowtie}.
	\end{setting}
    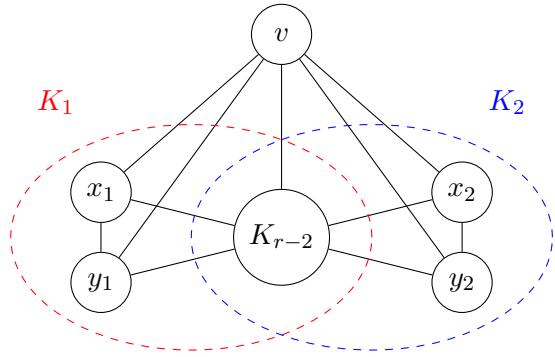
\begin{figure}[ht]
        \centering
        \begin{tikzpicture}[scale=1.2, every node/.style={circle, draw, minimum size=0.8cm}]
            \node (v) at (0, 2.25) {$v$};
            \node (z) at (0, 0) {$K_{r-2}$};
            
            \node (x1) at (-2, 0.5) {$x_1$};
            \node (y1) at (-2, -0.5) {$y_1$};
            
            \node (x2) at (2, 0.5) {$x_2$};
            \node (y2) at (2, -0.5) {$y_2$};
            
            \draw (v) -- (z);
            \draw (v) -- (x1);
            \draw (v) -- (y1);
            \draw (v) -- (x2);
            \draw (v) -- (y2);
            
            \draw (x1) -- (y1);
            \draw (x1) -- (z);
            \draw (y1) -- (z);
            
            \draw (x2) -- (y2);
            \draw (x2) -- (z);
            \draw (y2) -- (z);
            
            \draw[dashed, red] (-1, 0) ellipse (2cm and 1.25cm);
            \node[draw=none, text=red] at (-2.5, 1.5) {$K_1$};
            
            \draw[dashed, blue] (1, 0) ellipse (2cm and 1.25cm);
            \node[draw=none, text=blue] at (2.5, 1.5) {$K_2$};
            
        \end{tikzpicture}
        \caption{The triple $(v, K_1, K_2)$.}
        \label{fig:triple}
    \end{figure}
    \noindent
    See Figure \ref{fig:triple} for an illustration of this setting.
    It is convenient to introduce some more terminology:
    \begin{definition}[monochromatic vertex, centered clique] \label{def: mono}
		A vertex $v$ in a clique $M$ is {\em monochromatic in $M$} if all edges touching $v$ in $M$ have the same color. A {\em centered $(r+1)$-clique} is a pair $(v,K)$ where $K \subseteq N(v)$ is an $r$-clique (so $K \cup \{v\}$ is an $(r+1)$-clique) and all edges between $v$ and $K$ have the same color $c$ (so $v$ is monochromatic in $K \cup \{v\}$). The color $c$ is called the {\em main color} of $(v,K)$. 
	\end{definition}

    As mentioned above, our goal in this section is to consider situations where color-information can be transferred from $(v,K_1)$ to $(v,K_2)$. We therefore introduce the following definition. Recall that for a clique $K$ and a color $c$, we use $\#c(K)$ to denote the number of edges of color $c$ in $K$.
            	
	\begin{definition}[good/excellent triple]
		Let $v,K_1,K_2$ be as in Setting \ref{setting}. The triple $(v,K_1,K_2)$ is called {\em good} if for every color $c$, $\#c(K_1) = \#c(K_2)$. 
		
		If $(v,K_1)$ is a centered $(r+1)$-clique with main color $c_1$, then $(v,K_1,K_2)$ is called {\em excellent} if it is good and additionally, all edges between $v$ and $K_2$ have color $c_1$. 
	\end{definition}
    
	In what follows, we give sufficient conditions for being good/excellent. In the proofs, we will consider the bowties $(v,x_1y_1,x_2y_2)$ and $(z_i,x_1y_1,x_2y_2)$ for $1 \leq i \leq r-2$ (the notation is as in Setting \ref{setting}) and apply Lemmas \ref{lem: Bowtie fact} and \ref{lem: Bowtie}, using the assumption that there are no $K_r$-templates. Note that we may apply these lemmas because each of the above bowties is contained in the common neighborhood of an $(r-2)$-clique. Indeed, $(v,x_1y_1,x_2y_2) \subseteq N(z_1,\dots,z_{r-2})$, and $(z_i,x_1y_1,x_2y_2)$ is contained in the common neighborhood of $\{v\} \cup \{z_j : j \in [r-2]\setminus \{i\}\}$. 
	
	\begin{lemma}\label{lem:transferring colors basic}
		In Setting \ref{setting}, if $f(x_1y_1) = f(x_2y_2)$ then $(v,K_1,K_2)$ is good. In particular, 
		if there is $1 \leq i \leq r-2$ such that the triangle $z_ix_1y_1$ is rainbow, then $(v,K_1,K_2)$ is good. 
	\end{lemma}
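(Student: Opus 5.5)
We plan to compare the color profiles of $K_1$ and $K_2$ edge-class by edge-class. The edges of $K_1$ split into four groups: the edges inside $Z := \{z_1,\dots,z_{r-2}\}$, the edge $x_1y_1$, and for each $i$ the pair $z_ix_1, z_iy_1$. The edges of $K_2$ split the same way, with $x_2,y_2$ in place of $x_1,y_1$. The edges inside $Z$ are shared by $K_1$ and $K_2$. By hypothesis the edges $x_1y_1$ and $x_2y_2$ have the same color. So it suffices to show, for each $1 \le i \le r-2$, that $\{f(z_ix_1), f(z_iy_1)\} = \{f(z_ix_2), f(z_iy_2)\}$ as multisets. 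Summing over $i$ then gives $\#c(K_1) = \#c(K_2)$ for every color $c$.

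For a fixed $i$, we consider the bowtie $(z_i, x_1y_1, x_2y_2)$. As noted before the lemma, it lies in the common neighborhood of the $(r-2)$-clique $\{v\} \cup \{z_j : j \neq i\}$. Lemma \ref{lem: Bowtie fact} applies under Setting \ref{setting} and gives the multiset identity
\[
\{f(x_1y_1), f(z_ix_2), f(z_iy_2)\} = \{f(x_2y_2), f(z_ix_1), f(z_iy_1)\}.
\]
Cancelling the equal terms $f(x_1y_1) = f(x_2y_2)$ from the two sides yields exactly the required equality of pairs. This proves the first statement. Note that the vertex $v$ is needed only to supply the $(r-2)$-clique whose common neighborhood contains the bowtie.

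For the ``in particular'' part, suppose the triangle $z_ix_1y_1$ is rainbow. We apply Lemma \ref{lem: Bowtie}(a) to the same bowtie $(z_i, x_1y_1, x_2y_2)$, with $z_i$ in the role of the center $v$ of that lemma. It gives $f(x_1y_1) = f(x_2y_2)$, and the first part then finishes the proof.

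There is no real obstacle here. The only point needing care is checking that each bowtie $(z_i, x_1y_1, x_2y_2)$ lies in $N$ of an $(r-2)$-clique. This holds because $v$ and the $z_j$ with $j \neq i$ are adjacent to all of $x_1, y_1, x_2, y_2, z_i$ and to each other. For $r = 3$ this clique is just $\{v\}$, so the argument also covers that case.
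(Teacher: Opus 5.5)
Your proposal is correct and follows essentially the same route as the paper: apply the bowtie identity \eqref{eq:bowtie lemma} to each bowtie $(z_i,x_1y_1,x_2y_2)$ and cancel $f(x_1y_1)=f(x_2y_2)$ to match the pairs $\{f(z_ix_1),f(z_iy_1)\}$ and $\{f(z_ix_2),f(z_iy_2)\}$. For the ``in particular'' part you use Lemma \ref{lem: Bowtie}(a) on the same bowtie, exactly as the paper does, and your check that the bowtie lies in $N(\{v\}\cup\{z_j : j\neq i\})$ is also correct.
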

	\begin{proof}
		Fix any $1 \leq i \leq r-2$, and consider the bowtie $(z_i,x_1y_1,x_2y_2)$. As $f(x_1y_1) = f(x_2y_2)$, Lemma \ref{lem: Bowtie fact} (i.e., the equality \eqref{eq:bowtie lemma}) gives 
        $\{f(z_ix_2),f(z_iy_2)\} = \{f(z_ix_1),f(z_iy_1)\}$ as a multiset. This implies that every color $c$ appears the same number of times in $K_2$ as in $K_1$. Indeed, this follows from $K_1 \cap K_2 = \{z_1,\dots,z_{r-2}\}$, $f(x_2y_2) = f(x_1y_1)$, and $\{f(z_ix_2),f(z_iy_2)\} = \{f(z_ix_1),f(z_iy_1)\}$ for every $1 \leq i \leq r-2$. 
		So $(v,K_1,K_2)$ is good, as required. 
		
		The ``In particular" part follows from the first part, because if $z_ix_1y_1$ is rainbow then $f(x_1y_1) = f(x_2y_2)$ by Lemma \ref{lem: Bowtie}(a) applied to the bowtie $(z_i,x_1y_1,x_2y_2)$. 
	\end{proof}
	\begin{lemma}\label{lem:transferring colors monochromatic vertex}
		In Setting \ref{setting}, suppose that $(v,K_1)$ is a centered $(r+1)$-clique with main color $c_1$. If $(v,K_1,K_2)$ is {\bf not} excellent, then all of the following are satisfied:
		\begin{enumerate}
			\item $f(x_1y_1) = c_1$.
			\item $c_2 := f(x_2y_2) \neq c_1$.
			\item For every $1 \leq i \leq r-2$, (at least) one of the edges $z_ix_1,z_iy_1$ has color $c_1$, and (at least) one of the edges $z_ix_2,z_iy_2$ has color $c_2$.
			\item Either $K_2 \cup \{v\}$ has no monochromatic vertex, or it has a unique monochromatic vertex $w$. In the latter case, $w \in \{x_2,y_2\}$, $w$ is monochromatic in color $c_2$, and all edges between $v$ and $K_2 \setminus \{w\}$ have color $c_1$.  
		\end{enumerate}
		Moreover, in any case (i.e., even if $(v,K_1,K_2)$ is not good, which means that 1-4 hold), every color appearing in $K_2$, except possibly the color $c_2$, also appears in $K_1$. 
	\end{lemma}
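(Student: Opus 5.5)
The plan is to apply Lemma \ref{lem: Bowtie fact} and Lemma \ref{lem: Bowtie} to the bowtie $(v,x_1y_1,x_2y_2)$ and to the bowties $(z_i,x_1y_1,x_2y_2)$. As noted before Lemma \ref{lem:transferring colors basic}, each of these lies in the common neighbourhood of an $(r-2)$-clique, so both lemmas apply. Since $(v,K_1)$ is centered, $f(vx_1)=f(vy_1)=c_1$. Moreover every $z_i$ lies in $K_1$, so $f(vz_i)=c_1$ for all $i$. Hence $(v,K_1,K_2)$ is excellent as soon as it is good and $f(vx_2)=f(vy_2)=c_1$. I will prove the contrapositive of each item in turn.

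\emph{Item 1.} Suppose $f(x_1y_1)\neq c_1$. Then Lemma \ref{lem: Bowtie}(b) applied to $(v,x_1y_1,x_2y_2)$ gives $f(vx_2)=f(vy_2)=c_1$ and $f(x_1y_1)=f(x_2y_2)$. By Lemma \ref{lem:transferring colors basic} the triple is good, and by the remark above it is excellent. So non-excellence forces $f(x_1y_1)=c_1$.

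\emph{Item 2.} Now the triangle $vx_1y_1$ is monochromatic in $c_1$. Equation \eqref{eq:bowtie lemma} for $(v,x_1y_1,x_2y_2)$ reads $\{c_1,f(vx_2),f(vy_2)\}=\{f(x_2y_2),c_1,c_1\}$, so
\[
\{f(vx_2),f(vy_2)\}=\{f(x_2y_2),c_1\}.
\]
If $f(x_2y_2)=c_1$, then $f(vx_2)=f(vy_2)=c_1$. Also $f(x_1y_1)=f(x_2y_2)$, so Lemma \ref{lem:transferring colors basic} gives goodness, and the triple is excellent. Hence $c_2:=f(x_2y_2)\neq c_1$, and the display shows that exactly one of $vx_2,vy_2$ has color $c_2$ and the other has color $c_1$.

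\emph{Item 3.} For each $i$, equation \eqref{eq:bowtie lemma} for $(z_i,x_1y_1,x_2y_2)$ reads
\[
\{c_1,f(z_ix_2),f(z_iy_2)\}=\{c_2,f(z_ix_1),f(z_iy_1)\}.
\]
Since $c_1\neq c_2$, the $c_1$ on the left must occur among $f(z_ix_1),f(z_iy_1)$. Symmetrically, the $c_2$ on the right must occur among $f(z_ix_2),f(z_iy_2)$.

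\emph{Item 4.} By symmetry, assume $f(vx_2)=c_2$ and $f(vy_2)=c_1$. The vertex $v$ is not monochromatic in $K_2\cup\{v\}$, because its edges to $x_2$ and $y_2$ have different colors. Each $z_i$ is not monochromatic: $f(z_iv)=c_1$, while by item 3 some edge from $z_i$ to $\{x_2,y_2\}$ has color $c_2$. The vertex $y_2$ is not monochromatic, since $f(y_2v)=c_1\neq c_2=f(y_2x_2)$. So the only possible monochromatic vertex is $w=x_2$. Its edges include $x_2y_2$ and $x_2v$, both of color $c_2$, so if it is monochromatic it is monochromatic in $c_2$. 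Finally, the edges from $v$ to $K_2\setminus\{x_2\}=\{y_2,z_1,\dots,z_{r-2}\}$ all have color $c_1$.

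\emph{Moreover part.} If the triple is good there is nothing to prove, so assume items 1–4 hold. The edges of $K_2$ are of three kinds:
\begin{itemize}
\item the edges among the $z$'s, which also lie in $K_1$;
\item the edge $x_2y_2$, of color $c_2$;
\item the edges $z_ix_2$ and $z_iy_2$.
\end{itemize}
For the third kind, the multiset equality in item 3 shows that any color among $f(z_ix_2),f(z_iy_2)$ other than $c_2$ lies in $\{f(z_ix_1),f(z_iy_1),c_1\}$. All of these colors appear in $K_1$, since $c_1=f(x_1y_1)$. The only step needing care is this bookkeeping of multisets, in particular remembering that item 1 makes $c_1$ a color of $K_1$.
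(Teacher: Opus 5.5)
Your proof is correct and follows essentially the same route as the paper: Items 1--3 come from Lemma~\ref{lem: Bowtie}(b), Lemma~\ref{lem:transferring colors basic}, and the identity \eqref{eq:bowtie lemma} applied to the bowties $(v,x_1y_1,x_2y_2)$ and $(z_i,x_1y_1,x_2y_2)$. The only minor differences are that you already extract $\{f(vx_2),f(vy_2)\}=\{c_1,c_2\}$ in Item 2, which makes the uniqueness in Item 4 explicit, and that you read the ``moreover'' part directly off the multiset identity rather than invoking Lemma~\ref{lem: Bowtie}(c).
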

	\begin{proof}
		We first assume that $(v,K_1,K_2)$ is not excellent and prove that Items 1-4 hold. 
		For Item 1, suppose by contradiction that $f(x_1y_1) \neq c_1$. 
		Consider the bowtie $(v,x_1y_1,x_2y_2)$. 
        By assumption, $f(vx_1) = f(vy_1) = c_1$.
        By Lemma \ref{lem: Bowtie}(b) we get that $f(x_2y_2) = f(x_1y_1)$ and $f(vx_2) = f(vy_2) = c_1$. 
		Then by Lemma \ref{lem:transferring colors basic}, $(v,K_1,K_2)$ is good. Also, we just showed that all edges between $v$ and $K_2 = \{x_2,y_2,z_1,\dots,z_{r-2}\}$ have color $c_1$ (for $z_1,\dots,z_{r-2}$ this holds by assumption), so $(v,K_1,K_2)$ is excellent, a contradiction. This proves Item 1. 
		
		Now we prove Item 2. We already proved that $f(x_1y_1) = c_1$ (Item 1). Suppose by contradiction that $f(x_2y_2) = c_1 = f(x_1y_1)$. 
		Then by Lemma \ref{lem:transferring colors basic}, $(v,K_1,K_2)$ is good.
		Also, by considering the bowtie $(v,x_1y_1,x_2y_2)$, we get by Lemma \ref{lem: Bowtie fact} (i.e., the equality \eqref{eq:bowtie lemma}) that 
		$f(vx_2) = f(vy_2) = c_1$. Hence, $(v,K_1,K_2)$ is excellent, again giving a contradiction. 
		 
		Now we prove Item 3. Fix any $1 \leq i \leq r-2$ and consider the bowtie 
		$(z_i,x_1y_1,x_2y_2)$.
		As $f(x_1y_1) = c_1 \neq c_2 = f(x_2y_2)$, by the equality \eqref{eq:bowtie lemma} in Lemma \ref{lem: Bowtie fact} we have that one of the edges $z_ix_1,z_iy_1$ has color $c_1$, and one of the edges $z_ix_2,z_iy_2$ has color $c_2$. 
		
		Next, we prove Item 4. 
		First, let us consider the bowtie $(v,x_1y_1,x_2y_2)$. Similarly to the above, the fact that
        $f(x_1y_1) = c_1 \neq c_2 = f(x_2y_2)$ and equality \eqref{eq:bowtie lemma} imply that one of the edges $vx_2,vy_2$ has color $c_2$. 
		Now suppose that $K_2 \cup \{v\}$ has a monochromatic vertex $w$ (else Item 4 clearly holds). Note that $w \neq v$, because $v$ touches both colors $c_1$ and $c_2$ (indeed, the edges between $v$ and $\{z_1,\dots,z_{r-2}\}$ have color $c_1$, and one of the edges $vx_2,vy_2$ has color $c_2$). Also, $w \neq z_i$ for all $1 \leq i \leq r-2$, because $f(vz_i) = c_1$ and $z_i$ touches color $c_2$ in $K_2$ by Item 3. So $w \in \{x_2,y_2\}$. As $f(x_2y_2) = c_2$, $w$ is monochromatic in color $c_2$. Suppose without loss of generality that $w = x_2$. It remains to show that all edges between $v$ and $\{z_1,\dots,z_{r-2},y_2\}$ have color $c_1$. This is true for the edges $vz_i$, $1 \leq i \leq r-2$. Considering the bowtie $(v,x_1y_1,x_2y_2)$ again, and using that $f(vx_2) = f(x_2y_2) = c_2$ and that $f(vx_1) = f(vy_1) = f(x_1y_1) = c_1$, we get from equality \eqref{eq:bowtie lemma} that $f(vy_2) = c_1$, as required. 
		
		Finally, we prove the second part. 
        If $(v,K_1,K_2)$ is good then by definition, every color appearing in $K_2$ also appears in $K_1$, so we are done. Otherwise, Items 1-4 hold, and we need
        to show that except possibly for $c_2$, every color appearing in $K_2$ also appears in $K_1$. So fix any edge $e \in E(K_2)$ with $f(e) \neq c_2$, and let us show that $f(e)$ appears in $K_1$. If $e \subseteq \{z_1,\dots,z_{r-2}\}$ then this clearly holds (as 
        $\{z_1,\dots,z_{r-2}\} \subseteq K_1$). Also, $e \neq x_2y_2$ because $f(x_2y_2) = c_2$. So $e \in \{z_ix_2,z_iy_2\}$ for some $1 \leq i \leq r-2$. Consider the bowtie $(z_i,x_1y_1,x_2y_2)$, and note that $c_2 \in \{f(z_ix_2), f(z_iy_2)\}$ (by Item 3). By Item 2, $f(x_2y_2)$ also has color $c_2$.
        Since $f(e) \neq c_2$, Lemma \ref{lem: Bowtie}(c) implies that $f(e) \in \{f(x_1y_1), f(z_ix_1), f(z_iy_1)\}$. So $f(e)$ appears in $K_1$, as required.
	\end{proof}

    \noindent
    In what follows, we will also need the following notion and lemma.
	\begin{definition}[Isolated vertex]\label{def:isolated vertex}
		Let $(v,K)$ be a centered $(r+1)$-clique with main color $c$. 
		A vertex $t \in K$ is {\em isolated} if no edge of $K$ touching $t$ has color $c$. 
	\end{definition}
	
	\begin{lemma}\label{lem:isolated vertex}
		In Setting \ref{setting}, suppose that $(v,K_1)$ is a centered $(r+1)$-clique with main color $c$. If $(v,K_1,K_2)$ is excellent and $t \in \{z_1,\dots,z_{r-2}\}$ is isolated in $(v,K_1)$, then $t$ is also isolated in $(v,K_2)$. 
	\end{lemma}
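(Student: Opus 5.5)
The plan is to check the edges of $K_2$ at $t$ one type at a time; only the edges $tx_2$ and $ty_2$ need an argument. Write $t = z_i$. The edges of $K_2$ touching $t$ are the edges $tz_j$ for $j \neq i$, together with $tx_2$ and $ty_2$.

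The edges $tz_j$ for $j \neq i$ also lie in $K_1$. Since $t$ is isolated in $(v,K_1)$, none of them has color $c$. Isolation in $(v,K_1)$ also gives $f(tx_1), f(ty_1) \neq c$. So it remains to show that $f(tx_2) \neq c$ and $f(ty_2) \neq c$.

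\textbf{Step 1: the bowtie at $v$.} Consider the bowtie $(v,x_1y_1,x_2y_2)$, which lies in $N(z_1,\dots,z_{r-2})$.
\begin{itemize}
\item Since $(v,K_1)$ is centered with main color $c$, we have $f(vx_1)=f(vy_1)=c$.
\item Since the triple is excellent, all edges between $v$ and $K_2$ have color $c$, so $f(vx_2)=f(vy_2)=c$.
\end{itemize}
Equality \eqref{eq:bowtie lemma} of Lemma \ref{lem: Bowtie fact} then reads $\{f(x_1y_1),c,c\} = \{f(x_2y_2),c,c\}$ as multisets. Hence $f(x_1y_1)=f(x_2y_2)$.

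\textbf{Step 2: the bowtie at $t$.} Consider the bowtie $(t,x_1y_1,x_2y_2)$. It lies in the common neighborhood of the $(r-2)$-clique $\{v\}\cup\{z_j : j\neq i\}$, so Lemma \ref{lem: Bowtie fact} applies. It gives
\[
\{f(x_1y_1),f(tx_2),f(ty_2)\}=\{f(x_2y_2),f(tx_1),f(ty_1)\}.
\]
Cancelling the equal terms $f(x_1y_1)=f(x_2y_2)$ from Step 1 leaves $\{f(tx_2),f(ty_2)\}=\{f(tx_1),f(ty_1)\}$. Neither color on the right is $c$, so $f(tx_2),f(ty_2)\neq c$. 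Therefore $t$ is isolated in $(v,K_2)$.

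The only real obstacle is Step 1, that is, seeing that the monochromatic star at $v$ forces $f(x_1y_1)=f(x_2y_2)$. Once that holds, the cancellation in Step 2 is immediate. Goodness of the triple is not needed beyond the part of excellence saying the $v$–$K_2$ edges have color $c$.
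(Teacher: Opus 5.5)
Your proof is correct and follows the paper's argument. The bowtie $(v,x_1y_1,x_2y_2)$, with all four $v$-edges colored $c$, forces $f(x_1y_1)=f(x_2y_2)$, and the bowtie $(t,x_1y_1,x_2y_2)$ then gives $\{f(tx_2),f(ty_2)\}=\{f(tx_1),f(ty_1)\}$. The paper argues by contradiction, whereas you argue directly and also spell out that the edges $tz_j$ already lie in $K_1$, which the paper leaves implicit.
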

	\begin{proof}
		As $(v,K_1,K_2)$ is excellent, $(v,K_2)$ is also a centered $(r+1)$-clique with main color $c$.
		Suppose by contradiction that $t = z_i$ is not isolated in $(v,K_2)$. Then one of the edges $z_ix_2,z_iy_2$ has color $c$. By considering the bowtie $(v,x_1y_1,x_2y_2)$ and using that $f(vx_1) = f(vy_1) = f(vx_2) = f(vy_2) = c$, we get from the equality \eqref{eq:bowtie lemma} in Lemma \ref{lem: Bowtie fact} that $f(x_1y_1) = f(x_2y_2)$. 
        Now, considering the bowtie $(z_i,x_1y_1,x_2y_2)$, we get by Lemma \ref{lem: Bowtie fact} that 
		$\{f(z_ix_1),f(z_iy_1)\} = \{f(z_ix_2),f(z_iy_2)\}$, and hence one of the edges $z_ix_1,z_iy_1$ has color $c$, a contradiction to $z_i = t$ being isolated in $(v,K_1)$.
	\end{proof}
	
	\subsection{A vertex whose neighborhood contains only few colors}\label{subsec:U = N(v)}

	We now use the results of the previous sections to argue that in certain situations, there exists a vertex $v$ whose neighborhood contains at most $\binom{r}{2}$ different colors. 
    This is a key step towards proving Lemma \ref{lem:main}.
    Our proof strategy is as follows: Start with a carefully-chosen $r$-clique $K \subseteq N(v)$. Given an edge $uw$ with $u,w \in N(v)$, take the sequence $K = K_1,\dots,K_m \subseteq N(v)$ of $r$-cliques given by Lemma \ref{lem:chain main}, and argue that every color appearing in $K_m$ (and so in particular the color $f(uw)$) also appears in $K_1$. This suffices because $K_1$ clearly contains at most $\binom{r}{2}$ different colors. The argument proceeds by showing that $(v,K_i,K_{i+1})$ is good/excellent, using Lemmas \ref{lem:transferring colors basic} and \ref{lem:transferring colors monochromatic vertex}.\footnote{We note that there is one case (in the proof of Lemma \ref{lem:K_{r+1} with monochromatic vertex}) where we could not argue that $(v,K_{m-1},K_m)$ is good and instead use a slightly different argument.} 
    
	We prove three statements of the type described above, namely Lemmas \ref{lem:very simple K_{r+1}}, \ref{lem:K_{r+1} with many colors} and \ref{lem:K_{r+1} with monochromatic vertex}. 
	\begin{lemma}\label{lem:very simple K_{r+1}}
		Let $r \geq 3$. Let $(G, f)$ be an edge-colored graph with $\delta(G) \geq \frac{r-1}{r}|G| + 3$ and no $K_r$-template. Suppose that $G$ contains a centered $(r+1)$-clique $(v,K)$ with main color $c_1$ such that no edge inside $K$ has color $c_1$. Then every color in $N(v)$ appears in $K$. In particular, $N(v)$ contains at most $\binom{r}{2}$ colors.
	\end{lemma}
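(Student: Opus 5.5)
The plan is to use the chain strategy described at the start of Section~\ref{subsec:U = N(v)}. Fix an arbitrary edge $uw$ with $u,w \in N(v)$. Apply Lemma~\ref{lem:chain main} (its hypothesis $\delta(G) \geq \frac{r-1}{r}|G|+2$ holds) to $v$, the $r$-clique $K$, an arbitrary $x \in K$, and the edge $uw$. This yields $r$-cliques $K = K_1,\dots,K_m \subseteq N(v)$ with $u,w \in K_m$ and $|K_i \cap K_{i+1}| = r-2$. Each consecutive pair $(v,K_i,K_{i+1})$ is then an instance of Setting~\ref{setting}. We will show by induction on $i$ that $(v,K_i)$ is a centered $(r+1)$-clique with main color $c_1$, that no edge inside $K_i$ has color $c_1$, and that $K_i$ has the same color profile as $K$. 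Applied to $i=m$, this shows that $f(uw)$ appears in $K$. Since $uw$ was arbitrary, every color in $N(v)$ appears in $K$, and $K$ spans only $\binom{r}{2}$ edges.

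For the induction step, suppose $(v,K_i)$ is centered with main color $c_1$ and $c_1$ does not occur inside $K_i$. Write $K_i = \{x_1,y_1,z_1,\dots,z_{r-2}\}$ and $K_{i+1} = \{x_2,y_2,z_1,\dots,z_{r-2}\}$ as in Setting~\ref{setting}. We claim $(v,K_i,K_{i+1})$ is excellent. Otherwise, Lemma~\ref{lem:transferring colors monochromatic vertex} would apply, and its Item~1 gives $f(x_1y_1) = c_1$. This contradicts the assumption that no edge of $K_i$ has color $c_1$. Alternatively, Item~3 gives that some edge $z_1x_1$ or $z_1y_1$ has color $c_1$, again a contradiction. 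This step needs $r-2 \geq 1$, which holds since $r \geq 3$.

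Hence $(v,K_i,K_{i+1})$ is excellent. So all edges from $v$ to $K_{i+1}$ have color $c_1$, and $(v,K_{i+1})$ is centered with main color $c_1$. Excellence also includes goodness, so $\#c(K_{i+1}) = \#c(K_i)$ for every color $c$. In particular $\#c_1(K_{i+1}) = 0$, and the color profile of $K_{i+1}$ equals that of $K_i$, hence that of $K$. This completes the induction.

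I do not expect a real obstacle here: Lemma~\ref{lem:transferring colors monochromatic vertex} does the heavy lifting, and the hypothesis that $c_1$ is absent from $K$ is exactly what makes its failure conditions impossible. The one point to verify is that the invariant ``$c_1$ is absent inside the clique'' propagates along the chain, and it does, because goodness preserves the full color profile. We do not use the extra property in Lemma~\ref{lem:chain main} that $x$ lies in $K_1,\dots,K_{m-2}$.
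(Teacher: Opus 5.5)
Your proposal is correct and follows the paper's proof essentially verbatim. You use the same chain from Lemma~\ref{lem:chain main}, the same induction carrying the centered main color $c_1$ and the color profile of $K$, and the same contradiction via Item~1 of Lemma~\ref{lem:transferring colors monochromatic vertex} to force each triple $(v,K_i,K_{i+1})$ to be excellent.
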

	\begin{proof}
		Fix any edge $uw \in E(G)$ with $u,w \in N(v)$. Apply Lemma \ref{lem:chain main} to obtain a sequence of $r$-cliques $K_1,\dots,K_m \subseteq N(v)$ with $K_1 = K$, $u,w \in K_m$, and $|K_i \cap K_{i+1}| = r-2$ for every $1 \leq i \leq m-1$. (The vertex $x$ given as input to Lemma \ref{lem:chain main} does not matter here, and we take it to be an arbitrary vertex of $K$.)
		
		We prove by induction on $i$ that for every $1 \leq i \leq m$, $(v,K_i)$ is a centered $(r+1)$-clique with main color $c_1$, and $\#c(K_{i}) = \#c(K)$ for every color $c$. The base case clearly holds (as $K_1 = K$). So let $1 \leq i \leq m-1$, suppose that the claim holds for $i$, and let us prove it for $i+1$. By the induction hypothesis and the assumption of the lemma, we have $\#c_1(K_i) = \#c_1(K) = 0$. It now follows from Lemma \ref{lem:transferring colors monochromatic vertex} that $(v,K_i,K_{i+1})$ is excellent. Indeed, if not, then by Item 1 of Lemma \ref{lem:transferring colors monochromatic vertex} (applied to $(v,K_i,K_{i+1})$) we have that $\#c_1(K_i) \geq 1$, a contradiction. As $(v,K_i,K_{i+1})$ is excellent, we have that $(v,K_{i+1})$ is a centered $(r+1)$-clique with main color $c_1$, and $\#c(K_{i+1}) = \#c(K_i) = \#c(K)$ for every color $c$, establishing the induction step. 
		
		Finally, we see that every color appearing in $K_m$, and in particular the color $f(uw)$, also appears in $K$. This proves the lemma.
	\end{proof}
    \noindent
    For the next lemma, we need the following simple fact:
    \begin{lemma}\label{lem:many colors in r-clique}
        Let $r \geq 3$ and let $M$ be an edge-colored $(r+1)$-clique containing at least $\binom{r}{2}+1$ different colors. Then there is an $r$-clique $K \subseteq M$ which contains at least $\binom{r-1}{2}+2$ different colors.
    \end{lemma}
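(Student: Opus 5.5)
The plan is a double-counting argument on one representative edge per color, with a short separate check for $r=3$, where the counting falls just short.

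\textbf{Setup.} For every color $c$ appearing in $M$, fix one edge $e_c$ of $M$ with $f(e_c) = c$. Let $S$ be a set of exactly $s := \binom{r}{2}+1$ of these representative edges; this is possible since $M$ has at least that many colors. The edges of $S$ have pairwise distinct colors. For a vertex $u \in M$, let $d_S(u)$ be the number of edges of $S$ touching $u$. The $r$-clique $K := M \setminus \{u\}$ contains every edge of $S$ not touching $u$. Hence $K$ has at least $s - d_S(u)$ distinct colors.

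\textbf{Averaging, for $r \geq 4$.} Since $\sum_{u \in M} d_S(u) = 2s$, some $u$ has $d_S(u) \leq \frac{2s}{r+1}$. For this $u$, $K$ has at least $\frac{r-1}{r+1}s$ colors. Multiplying out by $2(r+1)$ gives
\[
2(r+1)\cdot\frac{r-1}{r+1}\Bigl(\tbinom{r}{2}+1\Bigr) = r^3-2r^2+3r-2,
\qquad
2(r+1)\Bigl(\tbinom{r-1}{2}+2\Bigr) = r^3-2r^2+3r+6 .
\]
Therefore $\frac{r-1}{r+1}s = \binom{r-1}{2}+2-\frac{4}{r+1}$. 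For $r \geq 4$ we have $\frac{4}{r+1} < 1$. The number of colors is an integer strictly greater than $\binom{r-1}{2}+1$, so it is at least $\binom{r-1}{2}+2$, which is what we need.

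\textbf{The case $r=3$.} Here the averaging only gives $2$ colors, so we argue directly. We must show that a $K_4$ with at least $4$ colors contains a rainbow triangle. Now $S$ consists of $4$ of the $6$ edges of $M$, so $S$ is the complement of two edges. There are two cases.
\begin{enumerate}[(i)]
\item The two missing edges share a vertex. Then $S$ is a triangle plus a pendant edge. The triangle of $S$ is rainbow, so removing the leaf of the pendant edge leaves a rainbow triangle.
\item The two missing edges are disjoint. Then $S$ is a $4$-cycle $1234$ with colors $a,b,c,d$ on the edges $12,23,34,41$, all distinct. Let $x := f(13)$. The triangle $123$ is rainbow unless $x \in \{a,b\}$. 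The triangle $134$ is rainbow unless $x \in \{c,d\}$. Since $\{a,b\} \cap \{c,d\} = \emptyset$, at least one of these two triangles is rainbow.
\end{enumerate}

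In both cases we obtain a triangle with $3 = \binom{2}{2}+2$ colors. The only delicate point in the whole argument is this $r=3$ case, where the averaging bound loses exactly one color and has to be replaced by the case check above.
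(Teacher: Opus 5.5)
Your proof is correct, but it takes a different route from the paper's.

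\textbf{The paper's route.} The paper argues by induction on $r$. The base case $r=3$ is a direct case analysis on the color pattern of the three edges at one vertex $w$ of the $K_4$. In the induction step, the paper deletes a vertex $v$ that touches a repeated color, so that $K=M\setminus\{v\}$ keeps at least $\binom{r-1}{2}+1$ colors. In the tight case $a=\binom{r-1}{2}+1$, it applies the induction hypothesis inside $K$ to get an $(r-1)$-clique $L$ with $\binom{r-2}{2}+2$ colors. It then adds $v$ back: of the $r-1$ colors present at $v$ but absent from $K$, at least $r-2$ land in $L\cup\{v\}$.

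\textbf{Your route.} You replace the whole induction step with one averaging step over a set $S$ of $\binom{r}{2}+1$ edges with distinct colors. Your arithmetic $\frac{r-1}{r+1}s=\binom{r-1}{2}+2-\frac{4}{r+1}$ is right. Integrality closes the gap exactly when $\frac{4}{r+1}<1$, that is, for $r\ge 4$.

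\textbf{What each approach buys.} Your argument is non-inductive and shorter. It also proves a more general fact: any edge-colored $K_{r+1}$ with $C$ colors contains a $K_r$ with at least $\lceil\frac{r-1}{r+1}C\rceil$ colors. The price is that averaging is slightly lossy, so it has no slack at $r=3$. Hence you, like the paper, need a separate base check there. The paper's induction step avoids averaging losses by exploiting the exact threshold structure, but still rests on the same $r=3$ base case.

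\textbf{Your $r=3$ check.} You split by whether the two edges outside $S$ share a vertex (triangle plus pendant edge) or form a perfect matching (a rainbow $4$-cycle plus the chord argument). This is complete and about as short as the paper's base case.
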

    \begin{proof}
        The proof is by induction on $r$. For the base case $r=3$, write $M = \{x,y,z,w\}$. Suppose first that all edges touching $w$ have the same color. Then either $xyz$ is a rainbow triangle (and we are done), or $M$ contains at most 3 colors, a contradiction. Suppose now that $xw,yw$ have the same color, say 1, and $zw$ has a different color, say 2. If $xz$ or $yz$ has a color different from 1 and 2, then we found a rainbow triangle. Otherwise, both $xz$ and $yz$ have color 1 or 2, so $M$ has at most 3 colors (namely, the colors $1,2$ and the color of $xy$), a contradiction. Finally, suppose that $xw,yw,zw$ all have different colors. In order to have no rainbow triangle, each edge inside $x,y,z$ needs to have the same color as one of the edges $xw,yw,zw$. This again means that $M$ has only 3 colors, a contradiction.

        For the induction step, suppose that $r \geq 4$.
        If $M$ is rainbow then every $r$-clique $K \subseteq M$ clearly contains $\binom{r}{2} \geq \binom{r-1}{2}+2$ colors (this inequality holds for $r \geq 3$). Otherwise, some color is repeated. Let $v$ be a vertex touching a repeated color, and let 
        $K = M \setminus \{v\}$. Let $a$ be the number of different colors contained in $K$. We claim that $a \geq \binom{r-1}{2}+1$. Indeed, since $v$ touches a repeated color, we have that either $v$ touches two edges of the same color, or some edge touching $v$ has the same color as an edge contained in $K$. In either case, we lose at most $r-1$ colors when deleting $v$, so $a \geq \binom{r}{2} + 1 - (r-1) = \binom{r-1}{2}+1$. If $a \geq \binom{r-1}{2}+2$ then we are done, so suppose that 
        $a = \binom{r-1}{2}+1$. Since $M$ contains at least $\binom{r}{2}+1$ different colors, there are at least $\binom{r}{2}+1-a = r-1$ colors which are present on edges touching $v$ but not in $K$. Hence, there are edges $e_1,\dots,e_{r-1}$ touching $v$ such that $e_1,\dots,e_{r-1}$ have pairwise-distinct colors and these colors do not appear in $K$. By the induction hypothesis (which applies as $a = \binom{r-1}{2}+1$), there is an $(r-1)$-clique $L \subseteq K$ containing edges of at least $\binom{r-2}{2}+2$ different colors. Adding $v$ to $L$, we add at least $r-2$ of the edges $e_1,\dots,e_{r-1}$, and hence at least $r-2$ new colors. So altogether, $L \cup \{v\}$ contains at least $\binom{r-2}{2}+2 + (r-2) = \binom{r-1}{2}+2$ different colors, as required.
    \end{proof}
    
	\begin{lemma}\label{lem:K_{r+1} with many colors}
		Let $r \geq 3$. Let $(G, f)$ be an edge-colored graph with $\delta(G) \geq \frac{r-1}{r}|G| + 3$. Suppose that $G$ has no $K_r$-template, and that $G$ has an $(r+1)$-clique $M$ which contains at least $\binom{r}{2}+1$ different colors. 
		Then there is $v \in V(G)$ such that $N(v)$ contains at most $\binom{r}{2}$ different colors. 
	\end{lemma}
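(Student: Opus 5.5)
Apply Lemma \ref{lem:many colors in r-clique} to $M$ to get an $r$-clique $K \subseteq M$ with at least $\binom{r-1}{2}+2$ colors. Let $v$ be the unique vertex of $M \setminus K$; then $K \subseteq N(v)$. I will show that every color in $N(v)$ already appears in $K$, which bounds the number of colors in $N(v)$ by $\binom{r}{2}$. As in Lemma \ref{lem:very simple K_{r+1}}, fix any edge $uw$ inside $N(v)$ and use Lemma \ref{lem:chain main} to get a chain $K = K_1,\dots,K_m \subseteq N(v)$ with $u,w \in K_m$ and $|K_i \cap K_{i+1}| = r-2$. The aim is to show by induction that $(v,K_i,K_{i+1})$ is good for every $i$. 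Then all $K_i$ have the same color profile as $K$, so $f(uw)$ is a color of $K$.

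The key observation is that many colors in an $r$-clique force a rainbow triangle through any fixed vertex. Suppose $K_i$ has the same color profile as $K$, so in particular it has at least $\binom{r-1}{2}+2$ distinct colors. Write $K_i = \{x_1,y_1,z_1,\dots,z_{r-2}\}$ and $K_{i+1} = \{x_2,y_2,z_1,\dots,z_{r-2}\}$ as in Setting \ref{setting}. By Lemma \ref{lem:transferring colors basic}, it suffices to find some $j$ such that the triangle $z_jx_1y_1$ is rainbow.

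Suppose instead that no triangle $z_jx_1y_1$ is rainbow. Then for each $j$, the edge pair $z_jx_1, z_jy_1$ contributes at most one color not already among $f(x_1y_1)$ and the colors on edges inside $Z := \{z_1,\dots,z_{r-2}\}$. This is because a non-rainbow triangle $z_jx_1y_1$ has either $f(z_jx_1)=f(z_jy_1)$ or one of them equal to $f(x_1y_1)$. Hence the number of colors in $K_i$ is at most
\[
\binom{r-2}{2} + 1 + (r-2) = \binom{r-1}{2}+1,
\]
which contradicts $K_i$ having at least $\binom{r-1}{2}+2$ colors. So some $z_jx_1y_1$ is rainbow, $(v,K_i,K_{i+1})$ is good, and the induction (including the color-count hypothesis) carries over to $K_{i+1}$.

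The only delicate point is the counting step. In particular, the non-rainbow condition must be used correctly for each $z_j$, so that each $z_j$ adds at most one new color beyond $f(x_1y_1)$ and the colors within $Z$. Once that count holds, the rest follows directly from the chain lemma and Lemma \ref{lem:transferring colors basic}; note that the position of $x$ in Lemma \ref{lem:chain main} plays no role here.
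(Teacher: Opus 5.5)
Your proposal is correct and follows the paper's proof: you choose $v$ via Lemma \ref{lem:many colors in r-clique}, run the chain from Lemma \ref{lem:chain main}, and inductively find a rainbow triangle $z_jx_1y_1$ so that Lemma \ref{lem:transferring colors basic} makes each triple good. Your color count $\binom{r-2}{2}+1+(r-2)=\binom{r-1}{2}+1$ is the same bound the paper obtains as $\binom{r}{2}-(r-2)$.
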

	\begin{proof}
		By Lemma \ref{lem:many colors in r-clique}, there is $v \in M$ such that $K := M-v$ contains at least $\binom{r-1}{2}+2$ different colors. 
		We now proceed similarly to the proof of Lemma \ref{lem:very simple K_{r+1}}. 
		Fix any edge $uw \in E(G)$ with $u,w \in N(v)$. 
		Apply Lemma \ref{lem:chain main} to obtain a sequence of $r$-cliques $K_1,\dots,K_m \subseteq N(v)$ with $K_1 = K$, $u,w \in K_m$, and $|K_i \cap K_{i+1}| = r-2$ for every $1 \leq i \leq m-1$. (The vertex $x$ given as input to Lemma \ref{lem:chain main} does not matter here, and we take it to be an arbitrary vertex of $K$.)
		
		We prove by induction that for every $1 \leq i \leq m$, $\#c(K_i) = \#c(K)$ for every color $c$. This holds for $i=1$ as $K_1 = K$. Now let $1 \leq i \leq m-1$, suppose the claim holds for $i$, and let us prove it for $i+1$. Write $K_i = \{x_1,y_1,z_1,\dots,z_{r-2}\}$, $K_{i+1} = \{x_2,y_2,z_1,\dots,z_{r-2}\}$ (as in Setting \ref{setting}). We claim that there is $1 \leq j \leq r-2$ such that the triangle $z_jx_1y_1$ is rainbow. Indeed, if not, then for every $1 \leq j \leq r-2$, either $f(z_jx_1) = f(z_jy_1)$, or one of the edges $z_jx_1,z_jy_1$ has the same color as $x_1y_1$. In any case, this means that there are edges $e_1,\dots,e_{r-2}$ in $K_i$ such that each $e_j$ ($1 \leq j \leq r-2$) has the same color as some edge in $E(K_i) \setminus \{e_1,\dots,e_{r-2}\}$ (here $e_j$ is $z_jx_1$ or $z_jy_1$). Hence, $K_i$ contains at most $\binom{r}{2} - (r-2) = \binom{r-1}{2}+1$ different colors. By the induction hypothesis, 
		$\#c(K_i) = \#c(K)$ for every color $c$. Hence, $K$ contains at most $\binom{r-1}{2}+1$ different colors, a contradiction to the choice of $K$. This proves our claim that there is $1 \leq j \leq r-2$ such that $z_jx_1y_1$ is a  rainbow triangle. Now, by Lemma \ref{lem:transferring colors basic}, $(v,K_i,K_{i+1})$ is good. Hence, for every color $c$, we have $\#c(K_{i+1}) = \#c(K_i) = \#c(K)$, establishing the induction \nolinebreak step. 
		
		Finally, we see that every color appearing in $K_m$, and in particular the color $f(uw)$, also appears in $K$. Since $uw$ was an arbitrary edge in $N(v)$, we get that $N(v)$ contains at most $\binom{r}{2}$ different \nolinebreak colors. 
	\end{proof}

	\begin{lemma}\label{lem:K_{r+1} with monochromatic vertex}
		Let $r \geq 3$. Let $(G, f)$ be an edge-colored graph with $\delta(G) \geq \frac{r-1}{r}|G| + 3$. Suppose that $G$ has no $K_r$-template and that every copy of $K_{r+1}$ has a monochromatic vertex. Then there is $v \in V(G)$ such that $N(v)$ contains at most $\binom{r}{2}$ different colors. 
	\end{lemma}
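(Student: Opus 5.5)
We follow the same template as Lemmas \ref{lem:very simple K_{r+1}} and \ref{lem:K_{r+1} with many colors}: pick a vertex $v$ and an $r$-clique $K \subseteq N(v)$, chain from $K$ to an arbitrary edge $uw$ of $N(v)$ using Lemma \ref{lem:chain main}, and transfer colors along the chain. Since every $(r+1)$-clique has a monochromatic vertex, we look for a centered $(r+1)$-clique $(v,K)$ with main color $c_1$ that is extremal in a suitable sense. Concretely, among all centered $(r+1)$-cliques we choose $(v,K)$ so that $\#c_1(K)$ is minimal. If $\#c_1(K)=0$, Lemma \ref{lem:very simple K_{r+1}} finishes the proof. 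So assume every centered $(r+1)$-clique contains an edge of its main color inside $K$.

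Fix $uw$ in $N(v)$ and let $K=K_1,\dots,K_m$ be the chain from Lemma \ref{lem:chain main}, with $x \in K_1,\dots,K_{m-2}$. We choose $x$ as follows: if $K$ has an isolated vertex (Definition \ref{def:isolated vertex}), take $x$ to be one; otherwise take $x$ to be an endpoint of an edge of color $c_1$. We then show by induction that $(v,K_i,K_{i+1})$ is excellent for all $i \le m-2$. Suppose $(v,K_i,K_{i+1})$ is not excellent. By Lemma \ref{lem:transferring colors monochromatic vertex}, $f(x_1y_1)=c_1$, $f(x_2y_2)=c_2\ne c_1$, and every $z_j$ sees $c_1$ towards $\{x_1,y_1\}$.

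Now apply the hypothesis to the $(r+1)$-clique $K_{i+1}\cup\{v\}$. It has a monochromatic vertex, so by Item 4 this vertex is, say, $x_2$, monochromatic in $c_2$, with all other edges from $v$ to $K_{i+1}$ colored $c_1$. This yields a new centered $(r+1)$-clique $(x_2,(K_{i+1}\setminus\{x_2\})\cup\{v\})$ with main color $c_2$. We then compare its number of $c_2$-edges, or isolated-vertex data, against the minimality of $\#c_1(K)$. The isolated vertex $x\in\{z_j\}$ is propagated along the excellent steps by Lemma \ref{lem:isolated vertex}. Item 3 says $x=z_j$ sees $c_1$ in $K_i$, which contradicts $x$ being isolated. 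This handles the case where $K$ has an isolated vertex. In the other case we use a counting argument: the monochromatic vertex $x_2$ of color $c_2$ makes the new centered clique have fewer main-color edges inside, which contradicts minimality.

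Once excellence holds up to index $m-1$, we get $\#c(K_{m-1})=\#c(K)$ for all $c$. For the final step from $K_{m-1}$ to $K_m$, which may not be excellent, we use the ``moreover'' part of Lemma \ref{lem:transferring colors monochromatic vertex}: every color in $K_m$ except possibly $c_2=f(x_2y_2)$ lies in $K$. If $f(uw)=c_2$, we argue separately. Either $c_2$ already appears in $K$, or the centered clique $(x_2,\dots)$ obtained from the monochromatic vertex gives a configuration where we can re-run the argument from $v' = x_2$. It may instead be cleaner to choose $v$ at the start to avoid this case.

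The main obstacle is choosing the right extremal clique $(v,K)$ so that the non-excellent case always yields a contradiction. I would first try minimizing $\#c_1(K)$, with ties broken by the number of isolated vertices, and check the contradiction in the bowtie configurations given by Lemma \ref{lem: Bowtie}.
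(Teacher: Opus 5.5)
Your architecture matches the paper's: a chain from Lemma~\ref{lem:chain main}, excellence along $K_1,\dots,K_{m-1}$ via Lemma~\ref{lem:transferring colors monochromatic vertex}, propagation of an isolated vertex via Lemma~\ref{lem:isolated vertex}, and a separate last step. However, your extremal choice does not close the intermediate steps. Suppose your global minimizer $(v,K)$ of $\#c_1(K)$ has no isolated vertex, and suppose $(v,K_i,K_{i+1})$ with $i\le m-2$ is not excellent. You assert that $(x_2,K')$, where $K'=(K_{i+1}\setminus\{x_2\})\cup\{v\}$, has fewer main-color edges. Put $Z=\{z_1,\dots,z_{r-2}\}$. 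Items 1 and 3 give only $\#c_1(K_i)\ge r-1$. On the other hand, $\#c_2(K')$ counts the $c_2$-edges from $y_2$ to $Z$ \emph{plus the $c_2$-edges inside $Z$}. Combining Item 3 with the bowtie identity, one only gets $\#c_1(K_i)-\#c_2(K')\ge 1+\#c_1(E(Z))-\#c_2(E(Z))$, which can be negative. For example, take $r=5$ with $f(x_1y_1)=c_1$, $x_2$ monochromatic in $c_2$, all other edges at $v$ of color $c_1$, $f(z_jx_1)=c_1$ and $f(z_jy_1)=f(z_jy_2)=c_2$ for all $j$, and all of $E(Z)$ colored $c_2$. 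This satisfies Items 1--4 and every bowtie identity of Lemma~\ref{lem: Bowtie fact}, yet $\#c_1(K_i)=4<6=\#c_2(K')$. So comparing $(x_2,K')$ with $(v,K)$ gives no contradiction under minimality or any tie-break. Your choice of $x$ as an endpoint of a $c_1$-edge also does nothing, since in a non-excellent step every vertex of $K_i$ touches $c_1$ anyway.

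The missing idea is to give the existence of an isolated vertex priority over the count. Choose $(v,K)$ among centered $(r+1)$-cliques \emph{that have an isolated vertex}, minimizing the main-color count; choose arbitrarily only if no such clique exists. Take $x$ to be an isolated vertex when one exists. Then the intermediate steps need no counting:
\begin{itemize}
\item If $(v,K)$ has an isolated vertex, it stays isolated in $(v,K_i)$, contradicting Items 1 and 3. For $i=m-2$ that vertex may be $x_1$ or $y_1$, and there Item 1 gives the contradiction.
\item If $(v,K)$ has no isolated vertex, then no centered clique has one. This contradicts the existence of $(x_2,K')$, in which $v$ is isolated.
\end{itemize}
Your counting argument belongs to the last step, which you leave unresolved. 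By the ``moreover'' part, only the case $c_2\notin K_{m-1}$ remains. In that case $Z$ spans no $c_2$-edges and $v$ is isolated in $(x_2,K')$, so $\#c_2(K')\le r-2<r-1\le\#c_1(K_{m-1})=\#c_1(K)$. This contradicts the minimality of $(v,K)$ among centered cliques with an isolated vertex, a class that contains $(x_2,K')$.
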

	\begin{proof}
		The minimum degree condition implies that $G$ contains an $(r+1)$-clique, and the assumption of the lemma states that every $(r+1)$-clique in $G$ has a monochromatic vertex. 
		We choose a centered $(r+1)$-clique $(v,K)$ in $G$ according to the following rules (recall Definition \ref{def:isolated vertex}).
		\begin{enumerate}
			\item[(a)] If there is a centered $(r+1)$-clique $(v,K)$ which has an isolated vertex, then choose such a $(v,K)$ for which the main color of $(v,K)$ appears the least possible number of times inside $K$.
			\item[(b)] Else (i.e., if no centered $(r+1)$-clique $(v,K)$ in $G$ has an isolated vertex), then choose $(v,K)$ to be an arbitrary centered $(r+1)$-clique. 
		\end{enumerate}
		Choose a vertex $t \in K$ as follows. If $(v,K)$ has an isolated vertex, then let $t$ be such a vertex, and else let $t \in K$ be arbitrary. We fix this choice of $(v,K)$ and $t$ for the rest of the proof. 
		
		We now proceed similarly to the previous two proofs. 
		Our goal is to show that $N(v)$ contains at most $\binom{r}{2}$ colors. So fix any edge $uw \in E(G)$ with $u,w \in N(v)$. We apply Lemma \ref{lem:chain main} with input $x := t$.
		Lemma \ref{lem:chain main} gives a sequence of $r$-cliques $K_1,\dots,K_m \subseteq N(v)$ such that $K_1 = K$, $u,w \in K_m$, $t \in K_1,\dots,K_{m-2}$ and $|K_i \cap K_{i+1}| = r-2$ for every $1 \leq i \leq m-1$. 
		
		
		We now continue via an inductive argument, as in the previous two proofs, going over the chain $K_1,\dots,K_m$. Note that unlike previous proofs, here we first only handle $K_1,\dots,K_{m-1}$ (in Claim \ref{claim:K_1,...,K_{m-1}}), and then handle $K_m$ separately (in Claim \ref{claim:K_m}). 
		Let $c_1$ be the main color $(v,K)$. 
		\begin{claim}\label{claim:K_1,...,K_{m-1}}
			{\color{white} text}
			\begin{enumerate}
				\item For every $1 \leq i \leq m-1$, all edges between $v$ and $K_i$ have color $c_1$. 
				\item If $t$ is isolated in $(v,K)$, then for every $1 \leq i \leq m-2$, $t$ is isolated in $(v,K_i)$. 
				\item For every $1 \leq i \leq m-1$, 
				$\#c(K_i) = \#c(K)$ for every color $c$. 
			\end{enumerate}
		\end{claim}
		\begin{proof}
			We prove all three items by induction on $i$, simultaneously. 
			In the base case $i=1$, Items 1-3 are immediate because $K_1 = K$. 
			Now fix any $1 \leq i \leq m-2$, and suppose that Items 1-3 hold for $i$. We prove that they hold for $i+1$. 
			
			Write $K_i = \{x_1,y_1,z_1,\dots,z_{r-2}\}$, $K_{i+1} = \{x_2,y_2,z_1,\dots,z_{r-2}\}$ (as in Setting \ref{setting}). 
			Note that $t \in K_i$ (as $i \leq m-2$), and that if $i \leq m-3$ then 
			$t \in K_i \cap K_{i+1} = \{z_1,\dots,z_{r-2}\}$.  
			If $(v,K_i,K_{i+1})$ is excellent, then, by definition, all edges between $v$ and $K_{i+1}$ have color $c_1$ and 
			$\#c(K_{i+1}) = \#c(K_i) = \#c(K)$ for every color $c$. Furthermore, if $i \leq m-3$ and $t$ is isolated in $(v,K)$, then by the induction hypothesis $t$ is isolated in $(v,K_i)$, and then by Lemma \ref{lem:isolated vertex}, $t$ is isolated in $(v,K_{i+1})$. 
			Hence, if $(v,K_i,K_{i+1})$ is excellent then Items 1-3 hold for $i+1$. 
			
			So suppose that $(v,K_i,K_{i+1})$ is not excellent. We will now use Lemma \ref{lem:transferring colors monochromatic vertex} and our choice of $(v,K)$ in Items (a)-(b) to arrive at a contradiction. By Items 1 and 3 of Lemma \ref{lem:transferring colors monochromatic vertex}, every vertex in $K_i$ touches an edge of color $c_1$. Hence, $(v,K_i)$ has no isolated vertex. 
			As $t \in K_i$, it follows that $t$ is not isolated in $(v,K)$ (because else $t$ would be isolated in $(v,K_i)$ by the induction hypothesis). 
			By the choice of $t$, we get that $(v,K)$ has no isolated vertex. 
			Now, by the choice of $(v,K)$ in Items (a)-(b), no centered $(r+1)$-clique $(v,K)$ in $G$ has an isolated vertex. However, let us consider the $(r+1)$-clique $M := K_{i+1} \cup \{v\}$. By the assumptions of Lemma \ref{lem:K_{r+1} with monochromatic vertex}, every $(r+1)$-clique in $G$ has a monochromatic vertex. Hence, by Item 4 of Lemma \ref{lem:transferring colors monochromatic vertex}, $M$ has a monochromatic vertex $w \in \{x_2,y_2\}$. 
			In fact, setting $K' := (K_{i+1} \setminus \{w\}) \cup \{v\}$, Items 2 and 4 of Lemma \ref{lem:transferring colors monochromatic vertex} imply that $(w,K')$ is a centered $(r+1)$-clique with main color $c_2 \neq c_1$, and that $v$ is isolated in 
			$(w,K')$. Thus, $G$ does contain a centered $(r+1)$-clique which has an isolated vertex, namely, $(w,K')$. This is a contradiction, completing the induction step.   
		\end{proof}
		\begin{claim}\label{claim:K_m}
			Every color appearing in $K_m$ also appears in $K_{m-1}$. 
		\end{claim}
		\begin{proof}
			As before, write $K_{m-1} = \{x_1,y_1,z_1,\dots,z_{r-2}\}$, $K_m = \{x_2,y_2,z_1,\dots,z_{r-2}\}$. 
			By Item 1 of Claim \ref{claim:K_1,...,K_{m-1}}, $(v,K_{m-1})$ is a centered $(r+1)$-clique with main color $c_1$. 
			If $(v,K_{m-1},K_m)$ is excellent then we are done, so from now on assume that $(v,K_{m-1},K_m)$ is not excellent. 
			Then by Items 2 and 4 of Lemma \ref{lem:transferring colors monochromatic vertex}, the following holds: $c_2 := f(x_2y_2) \neq c_1$; 
			$K_m \cup \{v\}$ has a unique monochromatic vertex $w \in \{x_2,y_2\}$; setting $K' := (K_m \setminus \{w\}) \cup \{v\}$, we have that $(w,K')$ is a centered $(r+1)$-clique with main color $c_2$ and $v$ is isolated in 
			$(w,K')$. (Here we again use that $K_m \cup \{v\}$ has a monochromatic vertex due to the assumption of Lemma \ref{lem:K_{r+1} with monochromatic vertex} that every $(r+1)$-clique in $G$ has a monochromatic vertex.) 
			Without loss of generality, $w = x_2$, and so $K' = \{y_2,z_1,\dots,z_{r-2},v\}$.
			 
			By the second part of Lemma \ref{lem:transferring colors monochromatic vertex}, every color in $K_m$ also appears in $K_{m-1}$, except possibly for the color $c_2$. So it remains to show that $c_2$ appears in $K_{m-1}$. Suppose otherwise. We count the number of appearances of $c_1$ in $K_{m-1}$ and of $c_2$ in $K'$. By Items 1 and 3 of Lemma \ref{lem:transferring colors monochromatic vertex}, $c_1$ appears at least $1 + (r-2) = r-1$ times in $K_{m-1}$. On the other hand, no edge inside $\{z_1,\dots,z_{r-2}\}$ has color $c_2$ (because we assumed that $c_2$ does not appear in $K_{m-1}$), and no edge going between $v$ and $K' \setminus \{v\} = \{y_2,z_1,\dots,z_{r-2}\}$ has color $c_2$ (because $v$ is isolated in $(x_2,K')$). Hence, in $K'$, only edges between $y_2$ and $\{z_1,\dots,z_{r-2}\}$ can have color $c_2$. Therefore, $\#c_2(K') \leq r-2$. 
			We conclude that $\#c_1(K_{m-1}) \geq r-1 > r-2 \geq \#c_2(K')$. Also, by Item 3 of Claim \ref{claim:K_1,...,K_{m-1}}, we have $\#c_1(K_{m-1}) = \#c_1(K)$, so $\#c_1(K) > \#c_2(K')$. 
			Summarizing, we see that $(x_2,K')$ is a centered $(r+1)$-clique with main color $c_2$, having an isolated vertex (namely $v$), and satisfying 
			$\#c_2(K') < \#c_1(K)$. 
			But this contradicts the choice of $(v,K)$ in Item (a) above. 
		\end{proof}
	
		We now complete the proof of the lemma. By combining Claim \ref{claim:K_m} with Item 3 of Claim \ref{claim:K_1,...,K_{m-1}}, we see that every color appearing in $K_m$ also appears in $K$. In particular, $f(uw)$ appears in $K$. As $uw$ was an arbitrary edge in $N(v)$, it follows that $N(v)$ contains at most $\binom{r}{2}$ colors. 
	\end{proof}

	\subsection{$(r+1)$-cliques with no monochromatic vertex}\label{subsec:U derived from K}
    
    In this section, we consider another way of obtaining a set $U$ as in Lemma \ref{lem:main}; namely, we take $U$ to be the set of all vertices which have $r$ neighbors in some fixed $(r+1)$-clique $K$. We need the following definition.
    Throughout this section, $r \ge 3$. 
    \begin{definition}[2-colored-star]\label{def: star}A clique $K$ is a {\em 2-colored-star} if there are distinct colors $c$ and $c'$ and a vertex $v$, called the {\em center}, such that all edges incident to $v$ have color $c$, and all edges inside $K \setminus \{v\}$ have color $c'$.
    \end{definition}
    \noindent
    The following lemma appears in \cite{BCPT:21} (for the case of $q=2$ colors). For completeness, we give a proof. 
    \begin{lemma}
        Let $r \geq 3$. Let $(G, f)$ be an edge-colored graph with no $K_r$-template and $K$ be a copy of $K_{r + 2}$ in $G$. Then $K$ is monochromatic or a 2-colored star.
        \label{lem:very simple kr+2}
    \end{lemma}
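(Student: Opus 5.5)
The plan is to work entirely inside $K$, using only that every 4-cycle whose vertices lie in the common neighborhood of some $(r-2)$-subset of $K$ is balanced. Since $|K| = r+2$, any four vertices $a,b,c,d \in K$ have the remaining $r-2$ vertices of $K$ as a common neighborhood clique. So the absence of $K_r$-templates gives the following. For every four distinct vertices $a,b,c,d\in K$, each of the three 4-cycles on them is balanced, i.e.\ $\{f(ab),f(cd)\}=\{f(bc),f(da)\}$ as multisets, and similarly for the other two pairings. In other words, for any $K_4$ inside $K$, the three perfect matchings pairwise satisfy this multiset relation. Note that no minimum degree assumption is needed.

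\textbf{Step 1: classify colorings of $K_4$.} I would determine all colorings of $K_4$ in which the three perfect matchings $M_1,M_2,M_3$ are pairwise balanced, i.e.\ for $i\neq j$ the multisets of colors on $M_i$ and $M_j$ agree. Hence all three matchings carry the same multiset $\{\alpha,\beta\}$. If $\alpha=\beta$, the $K_4$ is monochromatic. If $\alpha\neq\beta$, each of the three matchings has one edge of color $\alpha$ and one of color $\beta$. The three $\alpha$-edges then form either a triangle or a star $K_{1,3}$, since three edges of $K_4$ with one from each perfect matching are exactly a triangle or a star. The $\beta$-edges are the complement. So every $K_4\subseteq K$ is monochromatic or a 2-colored star (a star in one color plus a triangle in another).

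\textbf{Step 2: propagate to $K_{r+2}$.} Now $K$ has at least $5$ vertices, and every 4-subset is monochromatic or a 2-colored star. Suppose $K$ is not monochromatic. Because $K$ is connected, some two adjacent edges $xy,xz$ have distinct colors. First I show that at most two colors appear: if edges of three colors existed, some $K_4$ would contain three colors (take two differently colored edges sharing a vertex and extend suitably), contradicting Step 1. So there are exactly two colors, $c$ and $c'$. Next, call a vertex $v$ a \emph{center} of a 4-set $S$ if $S$ is a 2-colored star centered at $v$. Pick a non-monochromatic 4-set $S$ with center $v$ and star color $c$. For any other vertex $u$, I would examine the 4-sets $S\setminus\{s\}\cup\{u\}$ for $s \in S\setminus\{v\}$. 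These contain $v$ together with two edges of color $c$ at $v$ and an edge of color $c'$ among the others, so they are non-monochromatic, and the only star structure consistent with them has center $v$. This forces $f(vu)=c$ and forces all edges from $u$ to $S\setminus\{v\}$ to have color $c'$. Finally, for two outside vertices $u,u'$, the 4-set $\{v,u,u',s\}$ forces $f(uu')=c'$. Hence $K$ is a 2-colored star with center $v$.

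\textbf{Main obstacle.} The main obstacle is the case analysis in Step 2, in particular checking that the center is consistent across overlapping 4-sets. The tool here is that in a 2-colored-star $K_4$ the center is the unique vertex all of whose incident edges share a color distinct from the opposite triangle. So any two overlapping non-monochromatic 4-sets sharing three vertices must have the same center whenever that center lies in the intersection, and I would verify this using the two edges of color $c$ at $v$ that both 4-sets share.
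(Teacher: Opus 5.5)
Your proposal is correct and follows essentially the same route as the paper. Both arguments use that every 4-cycle in $K$ is balanced (the other $r-2$ vertices of $K$ serve as the center) and find a 4-set forming a 2-colored star. They then extend it to each further vertex $u$ via 4-sets $(S\setminus\{s\})\cup\{u\}$ that contain the center, and settle the remaining edges with 4-sets $\{v,u,u',s\}$. The only difference is that the paper obtains the first 2-colored star from two explicit 4-cycle equations, whereas you classify all colorings of $K_4$ upfront via the three perfect matchings; your ``at most two colors'' step is true but unnecessary, since the propagation already fixes every edge's color.
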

    \begin{proof}
        Since $|V(K)| = r+2 \ge 5$, any four vertices in $K$ lie in the common neighborhood of a copy of $K_{r-2}$. Thus, since $G$ is assumed to contain no $K_r$-templates, every 4-cycle in $K$ is balanced.
        
        Suppose $K$ is not monochromatic. Then there exists a vertex $v_1 \in V(K)$ incident to edges $v_1u_1$ and $v_1u_2$ with distinct colors $c_1$ and $c_2$, respectively. Let $v_2 \in V(K) \setminus \{v_1, u_1, u_2\}$. The balanced 4-cycle 
        $(v_1, u_1, v_2, u_2, v_1)$ implies 
        $f(v_2 u_1) = c_1$ and $f(v_2 u_2) = c_2$.
        
        Consider the 4-cycle $(u_1, u_2, v_1, v_2, u_1)$. 
        Since this cycle is balanced, we have 
        $\{u_1 u_2, v_1 v_2\} = \{c_1, c_2\}$. Without loss of generality, assume $f(u_1 u_2) = c_1$ and $f(v_1 v_2) = c_2$ (the other case is symmetrical by switching $c_1,c_2$ and $u_1,u_2$).
        
        We claim that $u_1$ is the center of a $c_1$-star and $K \setminus \{u_1\}$ is monochromatic in $c_2$. Let $w \in K \setminus \{v_1, u_1, u_2\}$. The balanced cycle 
        $(v_1, u_1, w, u_2, v_1)$ implies $\{c_1, f(u_2w)\} = \{f(u_1 w), c_2\}$. Since $c_1 \neq c_2$, we must have $f(u_1 w) = c_1$ and $f(u_2 w) = c_2$.
        
        Finally, we show that all remaining edges in $K \setminus \{u_1\}$ are colored $c_2$. 
        We have already shown this for edges incident to $u_2$. Fix any two distinct 
        $w,w' \in K \setminus \{u_1,u_2\}$. The cycle 
        $(u_1,u_2,w,w',u_1)$ is balanced and we have $f(u_1u_2) = f(u_1w') = c_1$ and $f(u_2w) = c_2$. Hence, $f(ww') = c_2$, as \nolinebreak required.
    \end{proof}

    \noindent
    Next, we need the following simple lemma.
    \begin{lemma}\label{lem:clique vertex copy}
        Let $(G, f)$ be an edge-colored graph with no $K_r$-templates. Let $K = \{x_1, \dots, x_{r + 1}\}$ be a $K_{r + 1}$ and suppose that $x_{r+1}$ is not monochromatic in $K$. Let $y \in V(G)$ be adjacent to $x_1, \dots, x_r$. Then for all $i \in [r]$,
            \[f(x_i y) = f(x_i x_{r + 1}).\]
    \end{lemma}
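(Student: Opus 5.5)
The plan is to use only the absence of $K_r$-templates, through balanced $4$-cycles, as in the proof of Lemma~\ref{lem:very simple kr+2}. If $y = x_{r+1}$ the claim is trivial. Also $y \neq x_i$ for $i \in [r]$, since $y$ is adjacent to $x_i$. So we may assume $y \notin K$.

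Write $a_i := f(x_i x_{r+1})$ and $b_i := f(x_i y)$ for $i \in [r]$. Fix distinct $i,j \in [r]$ and consider the $4$-cycle $(x_i, x_{r+1}, x_j, y, x_i)$. Its edges are present: $x_ix_{r+1}, x_jx_{r+1} \in E(K)$, and $yx_i, yx_j \in E(G)$ by assumption. Its four vertices are distinct.

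Let $Z := \{x_k : k \in [r]\setminus\{i,j\}\}$. This is a copy of $K_{r-2}$ (for $r=3$ it is a single vertex, which is fine). Every vertex of the cycle is adjacent to all of $Z$: $x_i, x_j, x_{r+1}$ because $K$ is a clique, and $y$ because $y$ is adjacent to $x_1,\dots,x_r$. So the cycle lies in $N(Z)$. Since $G$ has no $K_r$-templates, the cycle must be balanced. In the sense of Definition~\ref{def: template}, this gives the multiset identity
\[
\{a_i, b_j\} = \{a_j, b_i\} \qquad \text{for all distinct } i,j \in [r].
\]

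Now use that $x_{r+1}$ is not monochromatic in $K$, i.e., the colors $a_1,\dots,a_r$ are not all equal. Then for every $i \in [r]$ there is some $j \in [r]\setminus\{i\}$ with $a_j \neq a_i$. For that pair, the identity $\{a_i,b_j\} = \{a_j,b_i\}$ forces $a_i$ to be matched with $b_i$, since $a_i \ne a_j$. Hence $b_i = a_i$, and also $b_j = a_j$. Thus $f(x_iy) = f(x_ix_{r+1})$ for every $i \in [r]$, as required.

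The only step needing care is checking that each $4$-cycle lies in the common neighborhood of an $(r-2)$-clique, so that "no $K_r$-template" applies. This is exactly why $y$ must be adjacent to all of $x_1,\dots,x_r$. The remaining multiset bookkeeping is routine.
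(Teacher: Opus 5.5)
Your proof is correct and follows essentially the same argument as the paper. For each $i$ you pick $j$ with $f(x_jx_{r+1}) \neq f(x_ix_{r+1})$, note that the $4$-cycle on $y, x_i, x_{r+1}, x_j$ lies in the common neighborhood of the $(r-2)$-clique $\{x_1,\dots,x_r\}\setminus\{x_i,x_j\}$ and so must be balanced, and read off $f(x_iy)=f(x_ix_{r+1})$ from the resulting multiset identity; your explicit treatment of the degenerate case $y=x_{r+1}$ is a small detail the paper leaves implicit.
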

    \begin{proof}
        Let $i \in [r]$. Since $x_{r + 1}$ is not a monochromatic vertex, there exists $j \in [r]$ for which
        \[a := f(x_i x_{r + 1}) \neq f(x_j x_{r + 1}) =: b.\]
        The 4-cycle $(y, x_i, x_{r + 1}, x_j,y)$ lies in the neighborhood of the $(r - 2)$-clique 
        $\{x_1, \dots, x_r\} \setminus \{x_i, x_j\}$. 
        Hence, as $(G,f)$ has no $K_r$-template, this 4-cycle is balanced. Therefore,
        \[\{f(x_i y), b\} = \{f(x_j y), a\}.\]
        As $a \neq b$, $f(x_i y) = a = f(x_i x_{r + 1})$ as required.
    \end{proof}
    
    The following lemma is the main result of Section \ref{subsec:U derived from K}. It shows that under certain conditions, one can obtain a set $U$ satisfying the requirements of Lemma \ref{lem:main}.
	\begin{lemma} \label{lem: no mono few colors}
		Let $r \geq 3$. Let $(G, f)$ be an edge-colored graph with $\delta(G) \geq \frac{r-1}{r}|G|$ and no $K_r$-template. Suppose that $G$ has an $(r+1)$-clique $K = \{x_1,\dots,x_{r+1}\}$ with no monochromatic vertex and containing at most $\binom{r}{2}$ different colors, and assume that $G$ has no $(r+2)$-clique containing $K$. Then there is a subset $U \subseteq V(G)$ with $|U| \geq \delta(G)$ such that $U$ contains at most $\binom{r}{2}$ different colors. 
	\end{lemma}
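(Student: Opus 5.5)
The plan is to take
$$U := \{u \in V(G) : u \text{ is adjacent to at least } r \text{ vertices of } K\},$$
as suggested at the start of Section~\ref{sec: proof of main lemma}. The proof has two steps: a counting step showing $|U|\ge\delta(G)$, and a coloring step showing that every edge inside $U$ has a color already present in $K$. Since $K$ has at most $\binom{r}{2}$ colors by assumption, the second step finishes the proof.

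\emph{Size of $U$.} Because $G$ has no $(r+2)$-clique containing $K$, no vertex outside $K$ is adjacent to all $r+1$ vertices of $K$. Every vertex of $K$ has exactly $r$ neighbours in $K$, so $K\subseteq U$. Double count the edges between $K$ and $V(G)$:
$$(r+1)\delta(G)\le\sum_{x\in K}d(x)\le r|U|+(r-1)(|G|-|U|).$$
This gives $|U|\ge (r+1)\delta(G)-(r-1)|G|$. Since $\delta(G)\ge\frac{r-1}{r}|G|$, we have $(r-1)|G|\le r\delta(G)$, and hence $|U|\ge\delta(G)$.

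\emph{Colors of vertices in $U$.} For $u\in U$, let $i(u)$ be the unique index with $u$ adjacent to $K\setminus\{x_{i(u)}\}$; for $u=x_j$ this index is $j$. Since $K$ has no monochromatic vertex, $x_{i(u)}$ is not monochromatic in $K$. Lemma~\ref{lem:clique vertex copy} then gives $f(ux_j)=f(x_{i(u)}x_j)$ for all $j\ne i(u)$. In other words, $u$ behaves as a colored copy of $x_{i(u)}$ with respect to the rest of $K$.

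\emph{Edges inside $U$.} Take an edge $uw$ with $u,w\in U$. If $i(u)=i(w)=i$, then $\{u,w\}\cup(K\setminus\{x_i\})$ is an $(r+1)$-clique in which $u$ and $w$ have the same colors towards $K\setminus\{x_i\}$. Pick $j\ne k$ in $[r+1]\setminus\{i\}$ such that $f(x_ix_j)\neq f(x_ix_k)$; this is possible because $x_i$ is not monochromatic. The $4$-cycle $(u,x_j,w,x_k,u)$ lies in the common neighbourhood of the $(r-2)$-clique $K\setminus\{x_i,x_j,x_k\}$. Since there are no templates, this cycle is balanced, which only gives
$$\{f(ux_j),f(wx_k)\}=\{f(x_ju_\cdot)\dots\},$$
that is, information about the edges from $u,w$ to $x_j,x_k$ and nothing about $f(uw)$ itself. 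So instead use the $4$-cycle $(u,w,x_j,x_k,u)$ (note $x_jx_k$ is an edge). Balance gives
$$\{f(uw),f(x_jx_k)\}=\{f(wx_j),f(x_ku)\}=\{f(x_ix_j),f(x_ix_k)\}.$$
Hence $f(uw)$ is one of the two colors $f(x_ix_j),f(x_ix_k)$, both of which lie in $K$.

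If $i(u)=i\neq i(w)=l$, use the same kind of $4$-cycle $(u,w,x_j,x_k,u)$ with $j,k\notin\{i,l\}$. Such a cycle lies in the neighbourhood of $K\setminus\{x_i,x_l,x_j,x_k\}$, which has only $r-3$ vertices. This gap is fixed by adding $x_l$ or $x_i$ appropriately: $u$ is adjacent to $x_l$ and $w$ to $x_i$, so pick $j\notin\{i,l\}$ and use the $4$-cycle $(u,w,x_i,x_j,u)$. It lies in the neighbourhood of $K\setminus\{x_i,x_j,x_l\}$, an $(r-2)$-clique, and all of its vertices are adjacent to that set. Balance gives
$$\{f(uw),f(x_ix_j)\}=\{f(wx_i),f(x_ju)\}=\{f(x_lx_i),f(x_ix_j)\}.$$
Cancelling $f(x_ix_j)$ yields $f(uw)=f(x_ix_l)$, which is a color of $K$.

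The main obstacle I expect is this bookkeeping: choosing, in each case, a $4$-cycle through $uw$ whose remaining vertices, together with a suitable $(r-2)$-subset of $K$, form the required common-neighbourhood configuration. The degenerate cases $u\in K$ or $w\in K$ also need checking, but there $i(u)$ is the vertex itself and the identity $f(ux_j)=f(x_{i(u)}x_j)$ is trivial, so the same computations apply. The case $i(u)=i(w)$ also needs $r+1\ge4$ so that $j,k\neq i$ exist, which holds since $r\ge3$.
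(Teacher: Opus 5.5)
Your proof is correct. Its skeleton matches the paper's: the same set $U$, the same double count giving $|U|\ge\delta(G)$, and the same use of Lemma \ref{lem:clique vertex copy} to make each $u$ with $i(u)=i$ a color-copy of $x_i$ towards $K\setminus\{x_i\}$. For an edge between two different classes, your balanced $4$-cycle $(u,w,x_i,x_j,u)$ is the same $4$-cycle the paper uses implicitly when it applies Lemma \ref{lem:clique vertex copy} a second time, to the color-isomorphic clique $(K\setminus\{x_i\})\cup\{u\}$.

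The genuine difference is how you handle an edge $uw$ inside a single class $i(u)=i(w)=i$.
\begin{itemize}
\item The paper shows this never happens, i.e.\ each class $U_i$ is independent. Such an edge would create the $(r+2)$-clique $\{u,w\}\cup(K\setminus\{x_i\})$. By Lemma \ref{lem:very simple kr+2} this clique is monochromatic or a 2-colored-star, which forces a monochromatic vertex in $K$.
\item You instead take one balanced $4$-cycle $(u,w,x_j,x_k,u)$ around the $(r-2)$-clique $K\setminus\{x_i,x_j,x_k\}$ and read off $f(uw)\in\{f(x_ix_j),f(x_ix_k)\}$.
\end{itemize}
Your route is more elementary: it avoids Lemma \ref{lem:very simple kr+2} entirely. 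It also works for any distinct $j,k\in[r+1]\setminus\{i\}$, so your choice $f(x_ix_j)\neq f(x_ix_k)$ is unnecessary. The paper's route gives more structure: $G[U]$ is $(r+1)$-partite with parts $U_i$, and every edge between $U_i$ and $U_j$ has color $f(x_ix_j)$. The lemma as stated only needs the color count, which your argument provides.

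Some small repairs are needed.
\begin{itemize}
\item The set $\{u,w\}\cup(K\setminus\{x_i\})$ has $r+2$ vertices, not $r+1$. This plays no role in your argument.
\item The abandoned display involving $f(x_ju_\cdot)$ should simply be deleted.
\item In the cross case with $u=x_i$, the cycle $(u,w,x_i,x_j,u)$ degenerates, so ``the same computations apply'' is not literally true. In that subcase, however, $f(uw)=f(wx_i)=f(x_lx_i)$ follows directly from Lemma \ref{lem:clique vertex copy} applied to $w$. Alternatively, swap the roles of $u$ and $w$.
\end{itemize}
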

    \begin{proof}
        Let $U := \{v \in V(G) \mid v \text{ has $r$ neighbors in $K$}\}$. 
        First, we show that $U$ is of the desired size. By double counting, we have
        \begin{equation}\label{eq:no mono few colors double counting}
        \sum_{x \in K}d(x) = \sum_{u \in V(G)}d_K(u).
        \end{equation}
        For a lower bound, 
        \begin{equation}\label{eq:no mono few colors lower}
        \sum_{x \in K} d(x) \geq (r+1) \cdot \delta(G).
        \end{equation}
        Since there are no $(r + 2)$-cliques containing $K$, $d_{K}(u) \leq r$ for every $u$. So for an upper bound,
        \begin{equation}\label{eq:no mono few colors upper}
        \sum_{u \in V(G)}d_K(u) \leq r \cdot \vert U \vert + (r - 1)(\vert G \vert - \vert U \vert) = \vert U \vert + (r - 1) \vert G \vert.
        \end{equation}
        Combining \eqref{eq:no mono few colors double counting}, \eqref{eq:no mono few colors lower} and \eqref{eq:no mono few colors upper}, we get 
        $$
        |U| \geq (r+1) \cdot \delta(G) - (r-1)|G| \geq \delta(G),
        $$
        using that $\delta(G) \geq \frac{r-1}{r}|G|$.

        Next, we show that the colors of the edges in $U$ come from the colors of the edges in $K$, as follows. For each $i \in [r + 1]$, let
        \[U_i := \{u \in U \; \vert \; ux_j \in E(G) \text{ for all } j \neq i\}.\]
        In particular, $U = \bigcup_{i = 1}^{r + 1}U_i$, and $U_1,\dots,U_{r+1}$ are pairwise-disjoint.
        Since there is no monochromatic vertex in $K$, Lemma \ref{lem:clique vertex copy} applies: for all $u_i \in U_i$ and $j \in [r+1] \setminus \{i\}$,
        \begin{equation}\label{eq:no mono few colors u_i,x_i}
            f(u_i x_j) = f(x_i x_j).
        \end{equation}
        \begin{claim}\label{claim:no mono few colors 1}
            For every $1 \leq i < j \leq r + 1$ and every $u_iu_j \in E[U_i, U_j]$, it holds that 
            $f(u_i u_j) = f(x_i x_j)$.
        \end{claim}
        \begin{proof}
            Put $K' := (K \setminus \{x_i\}) \cup \{u_i\}$.
            By \eqref{eq:no mono few colors u_i,x_i}, 
            $f(u_i x_j) = f(x_i x_j)$ for all $j \in [r+1] \setminus \{i\}$.
            Hence, $K'$ is color-isomorphic to $K$; more precisely, the map from $K'$ to $K$ which maps $u_i$ to $x_i$ and fixed all other vertices preserves all colors. In particular, $K'$ has no monochromatic vertex (because $K$ has no such vertex). Now, by applying Lemma \ref{lem:clique vertex copy} to $K'$ and $y := u_j$, we get $f(u_iu_j) = f(u_ix_j) = f(x_ix_j)$, as required.
        \end{proof}
        \begin{claim}\label{claim:no mono few colors 2}
            For each $i \in [r + 1]$, $U_i$ is an independent set.
        \end{claim}
        \begin{proof}
            Suppose that there exist $v, w \in U_i$ with $vw \in E(G)$. Then 
            $$
            M := \{ v, w, x_1, \dots, x_{i - 1}, x_{i + 1}, \dots, x_{r + 1}\}
            $$
            is an $(r+2)$-clique. Since $G$ has no $K_r$-template, by Lemma \ref{lem:very simple kr+2}, $M$ is monochromatic or a 2-colored-star. Note that the map from $M \setminus \{w\}$ to $K$ which maps $v$ to $x_i$ and fixes all $x_j, j\neq i$, is color-preserving (by \eqref{eq:no mono few colors u_i,x_i}). Hence, if $M$ is monochromatic or a 2-colored-star then so is $K$. But this means that $K$ has a monochromatic vertex, a contradiction to the assumption of the lemma.
            %
        \end{proof}
        By Claim \ref{claim:no mono few colors 2}, $E(U) = \bigcup_{1 \leq i < j \leq r+1}E[U_i,U_j]$, and by Claim \ref{claim:no mono few colors 1}, all edges in $E[U_i,U_j]$ have color $f(x_ix_j)$. 
        Thus, every edge of $U$ is colored with one of the $\binom{r}{2}$ colors used in $K$.
    \end{proof}
	\subsection{Putting it all together: proof of Lemmas \ref{lem:main} and \ref{lem:aux_main}}
	\begin{proof}[Proof of Lemma \ref{lem:main}]
        Suppose first that $G$ contains a non-monochromatic $(r+2)$-clique $M$. By Lemma \ref{lem:very simple kr+2}, $M$ is a 2-colored-star (since $G$ has no $K_r$-template). Hence, $G$ contains a centered $(r+1)$-clique $(v,K)$ whose main color does not appear inside $K$ (indeed, take $v$ to be the center of $M$ and take $K$ to be any subset of $M \setminus \{v\}$ of size $r$). Now, by Lemma \ref{lem:very simple K_{r+1}}, the conclusion of Lemma \ref{lem:main} holds for $U := N(v)$. Thus, we may assume that all $(r+2)$-cliques in $G$ are monochromatic. Next, we split our analysis into the following three cases:
        \begin{enumerate}
            \item There exists a copy of $K_{r + 1}$ with no monochromatic vertex and $\leq \binom{r}{2}$ colors.
            \item There exists a copy of $K_{r +1}$ with $\ge \binom{r}{2} + 1$ colors.
            \item Every copy of $K_{r + 1}$ has a monochromatic vertex.
        \end{enumerate}
        Note that (at least) one of Cases 1-3 must hold. Indeed, as $\delta(G) > \frac{r-1}{r}|G|$, Tur\'an's theorem implies that $G$ contains a copy of $K_{r+1}$. Now, if Cases 1-2 do not hold, then Case 3 holds.
        In Case 1, let $K$ be a copy of $K_{r+1}$ as in Item 1. Then $K$ cannot be completed into an $(r+2)$-clique since we assumed that all $(r+2)$-cliques in $G$ are monochromatic (while $K$ is clearly not monochromatic). Therefore, the hypothesis of Lemma \ref{lem: no mono few colors} holds, implying the conclusion of Lemma \ref{lem:main}.

        In Cases 2 and 3, we may apply Lemmas \ref{lem:K_{r+1} with many colors} and \ref{lem:K_{r+1} with monochromatic vertex}, respectively. In both cases, we see that the conclusion of Lemma \ref{lem:main} holds with $U = N(v)$ for some vertex $v$.
	\end{proof}

    \begin{proof}[Proof of Lemma \ref{lem:aux_main}]
        Since $\delta(G) > \frac{r}{r + 1}|G|$, Tur\'an's theorem implies that $G$ contains a copy of $K_{r + 2}$. By Lemma \ref{lem:very simple kr+2}, every $K_{r + 2}$ is either monochromatic or a 2-colored-star (since $G$ has no $K_r$-template).
    
        \medskip
        \noindent \textbf{Case 1: $G$ contains a $K_{r+2}$-copy forming a 2-colored-star.} \\
        Let $K$ be such a $K_{r+2}$-copy, where the center $v$ of $K$ is incident to edges of color $c_1$, and the remaining clique $K' = K \setminus \{v\}$ is monochromatic in color $c_2$. Lemma \ref{lem:very simple K_{r+1}} implies that every color appearing in $N(v)$ must also appear in $K'$. (To be precise, we may apply Lemma \ref{lem:very simple K_{r+1}} with the centered $(r+1)$-clique $(v,K'')$, where $K''$ is any subset of $K'$ of size $r$). Since $K'$ contains only edges of color $c_2$, the set $U := N(v)$ only contains edges of color $c_2$. Furthermore, $|U| \ge \delta(G)$, satisfying the requirements.
    
        \medskip
        \noindent \textbf{Case 2: Every $K_{r + 2}$-copy in $G$ is monochromatic.} \\
        Let $K$ be a monochromatic $K_{r + 2}$ in color $c$. Fix a vertex $v \in K$ and let $U = N(v)$. Clearly, $|U| \ge \delta(G)$. To show that $G[U]$ is monochromatic, consider an arbitrary edge $uw \in E(G[U])$. We apply Lemma \ref{lem:chain main} (with parameter $r + 1$) to obtain a sequence of $(r + 1)$-cliques $K_1, \dots, K_m \subseteq U$ such that $K_1 = K \setminus \{v\}$, $u, w \in K_m$, and $|K_i \cap K_{i + 1}| = r - 1$ for all $1 \leq i \leq m - 1$.
        
        For each $i \in [m]$, the set $K_i \cup \{v\}$ forms a $K_{r + 2}$ in $G$. By our assumption, $K_i \cup \{v\}$ is monochromatic. Furthermore, consecutive cliques $K_i \cup \{v\}$ and $K_{i + 1} \cup \{v\}$ share $r \ge 3$ vertices, and thus share at least one edge. This forces the entire sequence of cliques to be monochromatic in the same color $c$. In particular, $K_m$ is monochromatic in color $c$, implying $f(uw) = c$. This proves that $U$ only contains edges of color $c$.
    \end{proof}
    \section{Proofs of the Main Theorems}\label{sec:proof of theorems}
    In this section, we prove Theorems \ref{thm:general upper bound}-\ref{thm:divisibility condition}. 
    The proofs of the upper bounds in these theorems proceed by first applying the cleaning lemma (Lemma \ref{lem:template cleaning}) to reduce to the case where $(G,f)$ has no $K_r$-templates, then applying Lemma \ref{lem:main} or \ref{lem:aux_main} to find a big set $U$ which contains only few colors, and finally applying Lemma \ref{lem: extremal tiling main} to find a $K_r$-tiling with large discrepancy. All upper bound proofs follow this plan, with the only difference being the value of $q$ and the assumed minimum degree $\delta(G)$. To avoid repetition, we execute these steps in the following two lemmas. 
    \begin{lemma} \label{lem:lower_bound_color}
    Let $r \ge 3$ and $q \ge 2$. For every $\varepsilon > 0$, there exists $\zeta > 0$ such that for every $n$ with $n \gg 1/\epsilon$ and $r \mid n$, the following holds. Let $(G,f)$ be a $q$-edge-colored $n$-vertex graph with $\delta(G) \geq \left(\frac{r-1}{r}+\epsilon\right)n$. Then there exists a $K_r$-tiling $\mathcal{T}$ of $G$ and a color $c \in [q]$ which appears on at least a
    \begin{equation}\label{eq:lower_bound_color}
    \min\left( \frac{1}{q} + \zeta, \; \frac{\rho - \varepsilon}{\binom{r}{2}} \right) 
    \end{equation}
    fraction of the edges of $\mathcal{T}$, where $\rho = \frac{2\delta(G)}{n} - 1$.
    \end{lemma}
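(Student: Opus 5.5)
We plan to follow the three-step scheme announced at the start of Section \ref{sec:proof of theorems}. Given $\varepsilon$, set $\xi := \varepsilon/4$ (small enough that $\delta(G) - \xi n > \frac{r-1}{r}n + 3$ for large $n$). Let $\zeta$ be the constant given by the cleaning lemma (Lemma \ref{lem:template cleaning}) for this $\xi$, shrinking it if needed. Apply Lemma \ref{lem:template cleaning} to $(G,f)$. In the first outcome, we immediately obtain a $K_r$-tiling in which some color has at least a $(\frac{1}{q}+\zeta)$-fraction of the edges. This already exceeds the minimum in \eqref{eq:lower_bound_color}, so we are done. It therefore remains to handle the second outcome.

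In the second outcome we have a subgraph $G'$ with $|V(G')| \geq (1-\xi)n$, $\delta(G') \geq \delta(G) - \xi n$, and no $K_r$-templates, and $V(G)\setminus V(G')$ has a $K_r$-tiling $\mathcal{T}_0$ in $G$. In particular $|V(G')|$ is divisible by $r$. Since $\delta(G') \geq (\frac{r-1}{r} + \varepsilon - \xi)n \geq \frac{r-1}{r}|G'| + 3$, Lemma \ref{lem:main} applies to $G'$. It yields $U \subseteq V(G')$ with $|U| \geq \delta(G') \geq \delta(G) - \xi n$ such that $G'[U]$ uses at most $\binom{r}{2}$ colors. Next take any $K_r$-tiling $\mathcal{T}_1$ of $G'$, which exists by Theorem \ref{thm:hajnal sze}. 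Viewing $\mathcal{T}_1$ as a $K_r$-tiling of the complete graph on $V(G')$, Lemma \ref{lem: extremal tiling main} with $X = U$ applies. This is legitimate since $|U| > \frac{r-1}{r}|G'|$. It shows that at least a $\frac{2|U|}{|G'|} - 1$ fraction of the edges of $\mathcal{T}_1$ lies inside $U$. These edges are edges of $G'[U]$, so by pigeonhole some color $c$ carries at least a $\frac{1}{\binom{r}{2}}\bigl(\frac{2|U|}{|G'|}-1\bigr)$ fraction of $e(\mathcal{T}_1)$.

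Finally, set $\mathcal{T} := \mathcal{T}_0 \cup \mathcal{T}_1$, a $K_r$-tiling of $G$. We have $e(\mathcal{T}_1) \geq (1-\xi)e(\mathcal{T})$ and $|G'| \leq n$, so $\frac{2|U|}{|G'|} - 1 \geq \frac{2(\delta(G)-\xi n)}{n} - 1 = \rho - 2\xi$. Hence
\[
\frac{\#c(\mathcal{T})}{e(\mathcal{T})} \geq (1-\xi)\,\frac{\rho - 2\xi}{\binom{r}{2}} \geq \frac{\rho - 3\xi}{\binom{r}{2}} \geq \frac{\rho-\varepsilon}{\binom{r}{2}},
\]
where we used $\rho \le 1$.

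The conceptual work is entirely contained in the earlier lemmas, so the only delicate step is the bookkeeping of error terms. One must ensure that passing to $G'$ costs only $O(\xi)$ in the fraction: the lost vertices $V(G)\setminus V(G')$, the degree loss, and the rescaling from $|G'|$ to $n$. One must also check that the slack $+3$ required by Lemma \ref{lem:main} and the condition $|U| > \frac{r-1}{r}|G'|$ survive; both follow from $\varepsilon n \gg 1$. It is also worth noting that Lemma \ref{lem: extremal tiling main} only needs the vertex partition, so it applies to tilings of $G'$ and not just of the complete graph.
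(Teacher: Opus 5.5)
Your proposal is correct and follows essentially the same route as the paper. Both arguments apply the cleaning lemma (Lemma \ref{lem:template cleaning}), use Lemma \ref{lem:main} on the template-free $G'$, tile $G'$ via Hajnal--Szemer\'edi, apply Lemma \ref{lem: extremal tiling main} with pigeonhole, and absorb the $O(\xi)$ losses; the only difference is that the paper takes $\xi=\varepsilon/3$ rather than your $\varepsilon/4$, and you spell out checks the paper leaves implicit, namely $r \mid |V(G')|$, the $+3$ slack for Lemma \ref{lem:main}, and $|U| > \frac{r-1}{r}|G'|$.
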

    \begin{proof}
        We first apply Lemma \ref{lem:template cleaning} to $(G, f)$ with parameter $\xi := \varepsilon/3$. 
        If Item 1 of Lemma \ref{lem:template cleaning} holds then we are done. Otherwise, Lemma \ref{lem:template cleaning} provides a $K_r$-template-free subgraph $G' \subseteq G$ such that $|V(G')| \ge (1 - \xi)n$ and $\delta(G') \ge \delta(G) - \xi n$, and such that the set $V(G) \setminus V(G')$ admits a $K_r$-tiling $\mathcal{T}_{\mathrm{rem}}$. Note that
        \[
        \delta(G') \ge \left(\frac{r - 1}{r} + \varepsilon - \xi\right)n \ge \left(\frac{r - 1}{r} + \frac{\varepsilon}{2}\right)n.
        \]
        Thus, by Theorem \ref{thm:hajnal sze}, $G'$ admits a perfect $K_r$-tiling $\mathcal{T}_{G'}$. Let $\mathcal{T} = \mathcal{T}_{G'} \cup \mathcal{T}_{\mathrm{rem}}$.
    
        Next, we apply Lemma \ref{lem:main} to $G'$ to obtain a set $U \subseteq V(G')$ with $|U| \ge \delta(G') \geq \delta(G) - \xi n$ such that $G'[U]$ contains edges of at most $\binom{r}{2}$ colors. Defining $\rho = 2\delta(G)/n - 1$, Lemma \ref{lem: extremal tiling main} implies that the fraction of edges in $\mathcal{T}_{G'}$ induced by $U$ is at least
        \[
        \rho' := \frac{2 |U|}{|G'|} - 1 \ge \frac{2 (\delta(G) - \xi n)}{n} - 1 = \rho - 2\xi.
        \]
        (Note that Lemma \ref{lem: extremal tiling main} applies to any $K_r$-tiling of $K_n$, and hence also any $K_r$-tiling of $G$.)
        By the pigeonhole principle, there exists a color 
        $c \in [q]$ that appears on at least a 
        $\frac{\rho - 2\xi}{\binom{r}{2}}$-fraction of the edges of $\mathcal{T}_{G'}$. Since $\mathcal{T}_{G'}$ contains at least a $(1 - \xi)$-fraction of all edges in $\mathcal{T}$, the color $c$ appears on at least the following fraction of the edges of $\mathcal{T}$:
        \[
        \frac{\rho - 2\xi}{\binom{r}{2}}(1 - \xi) 
        = \frac{\rho - 2\xi - \rho\xi + 2\xi^2}{\binom{r}{2}} 
        \ge \frac{\rho - 3\xi}{\binom{r}{2}} \ge \frac{\rho - \varepsilon}{\binom{r}{2}}.
        \]
        The penultimate inequality holds because $\rho \le 1$, and the final inequality because $3\xi = \varepsilon$.
    \end{proof}

    We also need the following variant of Lemma \ref{lem:lower_bound_color} in which the applicant of Lemma \ref{lem:main} is replaced with Lemma \ref{lem:aux_main}. This is relevant in the regime $\delta(G) > \frac{r}{r+1}n$.
    
    \begin{lemma} \label{lem:aux_lowerbound_color}
    Let $r \ge 3$ and $q \ge 2$. For every $\varepsilon > 0$, there exists $\zeta > 0$ such that for every $n$ with $n \gg 1/\epsilon$ and $r \mid n$, the following holds. Let $(G,f)$ be a $q$-edge-colored $n$-vertex graph with $\delta(G) \geq \left(\frac{r}{r + 1}+\varepsilon\right)n$. Then there exists a $K_r$-tiling $\mathcal{T}$ of $G$ and a color $c \in [q]$ which appears on at least a
    \begin{equation}\label{eq:aux_lowerbound_color}
    \min\left( \frac{1}{q} + \zeta, \; \rho - \varepsilon \right) 
    \end{equation}
    fraction of the edges of $\mathcal{T}$, where $\rho = \frac{2\delta(G)}{n} - 1$.
    \end{lemma}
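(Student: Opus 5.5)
The argument follows the proof of Lemma \ref{lem:lower_bound_color} essentially verbatim, with Lemma \ref{lem:aux_main} replacing Lemma \ref{lem:main}. Set $\xi := \varepsilon/3$ and apply the cleaning lemma (Lemma \ref{lem:template cleaning}) to $(G,f)$ with this $\xi$. This is legitimate because $\delta(G) \geq (\frac{r}{r+1}+\varepsilon)n \geq (\frac{r-1}{r}+\xi)n$. Take $\zeta$ to be the constant provided by that lemma. If Item 1 of the cleaning lemma holds, some color appears on at least a $(\frac1q+\zeta)$-fraction of the edges of some $K_r$-tiling, and the minimum in \eqref{eq:aux_lowerbound_color} is achieved.

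Otherwise Item 2 holds, and we obtain a template-free subgraph $G'$ with the following properties:
\begin{itemize}
\item $|V(G')| \geq (1-\xi)n$;
\item $\delta(G') \geq \delta(G)-\xi n \geq (\frac{r}{r+1}+\frac{2\varepsilon}{3})n$;
\item $V(G)\setminus V(G')$ has a $K_r$-tiling $\mathcal{T}_{\mathrm{rem}}$ in $G$.
\end{itemize}
Since $|V(G)\setminus V(G')|$ is divisible by $r$, so is $|G'|$. Because $\delta(G') > \frac{r-1}{r}|G'|$, Theorem \ref{thm:hajnal sze} yields a $K_r$-tiling $\mathcal{T}_{G'}$ of $G'$, and we set $\mathcal{T} := \mathcal{T}_{G'}\cup\mathcal{T}_{\mathrm{rem}}$.

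We next check the hypothesis of Lemma \ref{lem:aux_main}, namely $\delta(G') \geq \frac{r}{r+1}|G'|+3$. This holds since $|G'|\le n$ and $\frac{2\varepsilon}{3}n \geq 3$ for $n \gg 1/\varepsilon$. Lemma \ref{lem:aux_main} then gives $U \subseteq V(G')$ with $|U| \geq \delta(G') \geq \delta(G)-\xi n$ such that $G'[U]$ is monochromatic, say in color $c$. Since $|U| > \frac{r-1}{r}|G'|$, Lemma \ref{lem: extremal tiling main} applies to $\mathcal{T}_{G'}$, regarded as a tiling of the complete graph on $V(G')$. It shows that at least a $\frac{2|U|}{|G'|}-1 \geq \frac{2(\delta(G)-\xi n)}{n}-1 = \rho-2\xi$ fraction of the edges of $\mathcal{T}_{G'}$ lie inside $U$, and all of these have color $c$.

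It remains to pass from $\mathcal{T}_{G'}$ to $\mathcal{T}$. Since $\mathcal{T}_{G'}$ carries at least a $(1-\xi)$-fraction of the edges of $\mathcal{T}$, color $c$ occupies at least $(\rho-2\xi)(1-\xi) \geq \rho-3\xi = \rho-\varepsilon$ of the edges of $\mathcal{T}$, using $\rho\le1$. There is no real obstacle here; the only points needing care are the divisibility of $|G'|$ by $r$ and the additive $+3$ slack in Lemma \ref{lem:aux_main}, and both are handled above.
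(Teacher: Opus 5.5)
Your proposal is correct and is exactly the argument the paper intends: the paper omits this proof, saying it is the proof of Lemma~\ref{lem:lower_bound_color} with Lemma~\ref{lem:aux_main} used in place of Lemma~\ref{lem:main}, and that is what you carry out. You also make explicit two details the paper leaves implicit, namely that $r \mid |G'|$ and that the $+3$ slack required by Lemma~\ref{lem:aux_main} is available.
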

    The proof of Lemma \ref{lem:aux_lowerbound_color} is essentially identical to that of Lemma \ref{lem:main}, with the difference being that we apply Lemma \ref{lem:aux_main} in place of Lemma \ref{lem:main}. (We may apply Lemma \ref{lem:aux_main} due to the minimum degree assumption in Lemma \ref{lem:aux_lowerbound_color}.)
    Thus, instead of obtaining a set $U$ which contains at most $\binom{r}{2}$ colors (as guaranteed by Lemma \ref{lem:main}), we obtain a set $U$ which is monochromatic (as guaranteed by Lemma \ref{lem:aux_main}). Hence, in \eqref{eq:aux_lowerbound_color} we do not have the $\frac{1}{\binom{r}{2}}$ factor appearing in \eqref{eq:lower_bound_color}. We omit the proof of Lemma \ref{lem:aux_lowerbound_color}.

    We are now ready to prove Theorems \ref{thm:general upper bound}-\ref{thm:divisibility condition}.
    \begin{proof}[Proof of Theorem \ref{thm:general upper bound}]
        Let $\varepsilon > 0$ and let $n$ be an integer with $n \gg \varepsilon$ and $r \mid n$. Let $G$ be an $n$-vertex graph with $\delta(G) \ge (\frac{r}{r + 1} + \varepsilon)n$, and let $f \colon E(G) \to [q]$ be an edge coloring.
    
        By Lemma \ref{lem:aux_lowerbound_color}, there exists a $K_r$-tiling $\mathcal{T}$ of $G$ and a color $c \in [q]$ which appears on at least a $\mu$-fraction of the edges of $\mathcal{T}$, where
        \[
            \mu := \min\left( \frac{1}{q} + \zeta, \; \rho - \epsilon \right) 
        \]
        and $\rho = \frac{2\delta(G)}{n} - 1$. To lower bound the second term, we have
        \[
            \rho - \varepsilon = \left(\frac{2\delta(G)}{n} - 1\right) - \varepsilon \ge \left(\frac{2r}{r+1} + 2\varepsilon - 1\right) - \varepsilon = \frac{r-1}{r+1} + \varepsilon.
        \]
        For $r \ge 3$, we have $\frac{r-1}{r+1} \ge \frac{1}{2}$. Since $q \ge 2$, we have $\frac{1}{q} \le \frac{1}{2}$. Thus,
        \[
            \mu \ge \min\left(\frac{1}{q} + \zeta, \; \frac{1}{2} + \varepsilon\right) \ge \frac{1}{q} + \min(\zeta, \varepsilon).
        \]
        It follows that $\mathcal{T}$ has discrepancy $\Omega(\zeta n)$. This proves that $\delta_{r,q} \le \frac{r}{r+1}$.
    \end{proof}

        \begin{proof}[Proof of Theorems \ref{thm:r=4} and \ref{thm:r=3}]
        We prove both theorems simultaneously. Suppose first that $\binom{r}{2} \le q \le \binom{r + 1}{2}$. In this case, we have $\delta_{r,q} = \frac{r}{r+1}$, with the lower bound given by Lemma \ref{lem: thm 1.6 & 1.7 lo part 1} and the upper bound by Theorem \ref{thm:general upper bound}. This establishes the first case of Theorem \ref{thm:r=4}, and the case $3 \leq q \leq 6$ of Theorem \ref{thm:r=3}. In the case $r=3,q=2$ we again have 
        $\delta_{3,2} = \frac{r}{r+1} = \frac{3}{4}$, with the lower bound given by Lemma \ref{lem:thm 1.8 lo} and the upper bound by Theorem \ref{thm:general upper bound}.
        
        Suppose from now on that $q > \binom{r + 1}{2}$. 
        We first establish the lower bounds. For $q = \binom{r + 1}{2} + 1$ (this applies both to $r\geq 4$ and $r=3$), the lower bound $\delta_{r,q} \geq \frac{r^2+1}{r^2+r+2}$ (so $\delta_{3,7} \geq \frac{5}{7}$) is given by Lemma \ref{lem: thm 1.6 & 1.7 lo part 2}; indeed, substituting 
        $q = \binom{r + 1}{2} + 1 = (r^2+r+2)/2$ into $\frac{1}{2} + \frac{r(r-1)}{4q}$ gives $\delta_{r, q} \ge \frac{r^2+1}{r^2+r+2}$. 
        For $r = 3$ and $q = 8$, the lower bound $\delta_{3,8} \geq \frac{11}{16}$ again follows from Lemma \ref{lem: thm 1.6 & 1.7 lo part 2}, because substituting $r=3,q=8$ into $\frac{1}{2} + \frac{r(r-1)}{4q}$ gives $\frac{11}{16}$.
        Finally, note that we have 
        $\delta_{r, q} \geq \frac{r-1}{r}$ for all $r,q$, because there exist graphs $G$ with 
        $\delta(G) = \frac{r-1}{r}n - 1$ and no $K_r$-tiling (so in particular no $K_r$-tiling with high discrepancy). 
        The above covers all cases of Theorems \ref{thm:r=4} and \ref{thm:r=3}.

        Next, we prove the upper bounds. 
        In each case, let $\delta^*$ denote the claimed threshold for given $r,q$. Let $\varepsilon > 0$, and let $(G, f)$ be a $q$-edge-colored $n$-vertex graph with $n \gg 1/\varepsilon$, $r \mid n$, and $\delta(G) \ge (\delta^* + \varepsilon)n$. Since $\delta^* \ge \frac{r-1}{r}$ in all cases, we may apply Lemma \ref{lem:lower_bound_color} to find a $K_r$-tiling $\mathcal{T}$ and a color $c \in [q]$ appearing on at least a fraction $\mu$ of the edges, where
        \[
            \mu := \min\left(\frac{1}{q} + \zeta, \; \frac{\rho - \varepsilon}{\binom{r}{2}}\right) \quad \text{and} \quad \rho = \frac{2\delta(G)}{n} - 1.
        \]
        To prove that $\mathcal{T}$ has discrepancy $\Omega(n)$, it suffices to show that $\mu - \frac{1}{q} = \Omega(1)$ (where the $\Omega(1)$ term depends on $\varepsilon,\zeta$). The first entry in the minimum is $\frac{1}{q}+\zeta$, so it suffices to consider the second entry. Substituting $\rho \ge (2\delta^* - 1) + 2\varepsilon$, we obtain:
        \[
            \frac{\rho - \varepsilon}{\binom{r}{2}} \ge \frac{(2\delta^* - 1) + \varepsilon}{\binom{r}{2}} = \frac{2\delta^* - 1}{\binom{r}{2}} + \frac{\varepsilon}{\binom{r}{2}}.
        \]
        Therefore, it suffices to verify that $\frac{2\delta^* - 1}{\binom{r}{2}} \ge \frac{1}{q}$. We check this for each regime in Theorems \ref{thm:r=4} and \ref{thm:r=3}, as follows:
    
        \begin{itemize}
            \item If $q = \binom{r+1}{2} + 1$, the threshold is $\delta^* = \frac{r^2 + 1}{r^2 + r + 2}$ both in Theorem \ref{thm:r=4} and in Theorem \ref{thm:r=3}. Then:
            \[
                \frac{2\delta^* - 1}{\binom{r}{2}} = \frac{\frac{2(r^2 + 1)}{r^2 + r + 2} - 1}{\frac{r(r-1)}{2}} = \frac{2}{r^2 + r + 2} = \frac{1}{q}.
            \]
            \item If $r=3$ and $q=8$, the threshold is $\delta^* = \frac{11}{16}$, and we have 
            $$
            \frac{2\delta^* - 1}{\binom{r}{2}} = \frac{6/16}{3} = \frac{1}{8} = \frac{1}{q}.
            $$
            \item Suppose that $r \geq 4$ and $q \ge \binom{r+1}{2} + 2$, 
            or $r = 3$ and $q \geq 9$. In both cases,
            the threshold is $\delta^* = \frac{r-1}{r}$. Then:
            \[
                \frac{2\delta^* - 1}{\binom{r}{2}} = \frac{\frac{2(r-1)}{r} - 1}{\frac{r(r-1)}{2}} = \frac{2(r-2)}{r^2(r-1)}.
            \]
            It is not hard to check that 
            $\frac{2(r-2)}{r^2(r-1)} \geq \frac{1}{q}$ if $r \geq 4$ and $q \ge \binom{r+1}{2} + 2$, or if 
            $r=3$ and $q \geq 9$. Thus, in both cases, the required inequality holds. 
        \end{itemize}
        This completes the proof of Theorems \ref{thm:r=4} and \ref{thm:r=3}.
    \end{proof}
    \begin{proof}[Proof of Theorem \ref{thm:divisibility condition}]
        The lower bound is Lemma \ref{lem:thm 1.8 lo} and the upper bound is Theorem \ref{thm:general upper bound}.
    \end{proof}

    
\bibliographystyle{abbrv}
\bibliography{library}

\end{document}